\documentclass[12pt]{article}
\usepackage{latexsym,amsfonts,amssymb}
\usepackage[dvips]{color}
\usepackage{colordvi,multicol}
\usepackage{amsmath}
\usepackage{graphicx}
\usepackage{pdflscape}
\usepackage{rotating}

\usepackage{latexsym,amsfonts,amssymb}
\usepackage[dvips]{color}
\usepackage{colordvi,multicol}
\usepackage{amsmath}
\usepackage{graphicx}
\usepackage{pdflscape}
\usepackage{rotating}
\usepackage{breqn}

\def \cal{\mathcal}

\newtheorem{thm}{Theorem}[section]

\newtheorem{pro}[thm]{Proposition}

\newtheorem{rem}[thm]{Remark}

      \def\be{\begin{equation}} 
      \def\ee{\end{equation}} 
      \def\beqn{\begin{eqnarray}} 
      \def\eeqn{\end{eqnarray}} 
      \def\beq{\begin{eqnarray*}} 
      \def\eeq{\end{eqnarray*}}
      
      \def\ga{\gamma} 
      \def\ep{\varepsilon} 
      \def\ra{\rightarrow} 

\date{}
\begin{document}

\title{\bf  Dynamical models for the two-parameter Poisson-Dirichlet distribution and the Pitman-Yor process}
\author{}

\maketitle
\date{}

 \centerline{Shui Feng} \centerline{\small Department of Mathematics and Statistics}
 \centerline{\small McMaster
 University}
\centerline{\small Hamilton, L8S 4K1, Canada} \centerline{\small
E-mail: shuifeng@mcmaster.ca} \vskip 1cm \centerline{Wei Sun}
\centerline{\small Department of Mathematics and Statistics}
 \centerline{\small Concordia University} \centerline{\small Montreal, H3G 1M8,
Canada} \centerline{\small E-mail: wei.sun@concordia.ca}

\vskip 1cm
\maketitle
\begin{abstract}
\noindent The Pitman-Yor process is a discrete labelled random measure that has been used as a prior in Bayesian nonparametrics for the study of groups of clustered data exhibiting power law behaviour. The masses of the process in descending order follow the unlabelled two-parameter Poisson-Dirichlet distribution. They are natural generalizations of their one-parameter counterparts: Ferguson's Dirichlet process and Kingman's Poisson-Dirichlet distribution. There have been well established studies of dynamical models associated with the Dirichlet process and the Poisson-Dirichlet distribution.   In this paper, we introduce and study a family of diffusion processes associated with the Pitman-Yor process and the two-parameter Poisson-Dirichlet distribution.  The diffusion coefficients indexed by a non-negative parameter $\ga$ are smaller than the corresponding one-parameter models in terms of quadratic forms or bilinear forms when $\ga $ is positive.   The well known Petrov's diffusion \cite{P} corresponds to $\ga =0$ among the unlabelled diffusions.  If $\ga$ is the same as the stable parameter $\alpha$ in the Pitman-Yor process, we obtain both labelled and unlabelled reversible diffusion processes with the Pitman-Yor process and the two-parameter Poisson-Dirichlet distribution as the corresponding reversible measures. We construct these  processes analytically through Dirichlet forms. In comparison with existing models in the literature, our models possess two fundamental new features. Firstly, our labelled model is characterized by an explicit generator, which is the first among all models studied so far. This makes it possible to establish the crucial integration-by-parts formula. Secondly, a novel foundational structure of the two-parameter distributions is the existence of a diversity index (a multiple of local time) for the positive stable parameter.  By slowing down the diffusion in our model, the essential role of the diversity index is revealed in the evolution of the population.  Additionally, we also obtain properties including ergodicity, path behaviour,  and finite dimensional approximations.
\end{abstract}
\vskip 0.3cm
\noindent {\it AMS 2020 subject classifications:} {60G57, 92D25}.
\vskip 0.3cm
\noindent {\it Keywords:} {Poisson-Dirichlet distribution, Pitman-Yor process, Fleming-Viot process, $\alpha$-diversity,  local time, reversibility, Dirichlet form.}

\section{Introduction}

For any $0\leq \alpha<1$,  $\theta+\alpha>0$, let $U_1, U_2, \ldots$ be a sequence of independent beta random variables with $U_n$ having the Beta$(1-\alpha,\theta+n\alpha)$ distribution. Set
\[
V_1=U_1,\ V_n=U_n\prod_{i=1}^{n-1}(1-U_i), \  n\geq 2.
\]
Let $S$ be a Polish space with metric $d$. Denote by ${\cal P}_1(S)$ the space of all probability measures on $S$ with weak topology. Let $\nu_0\in{\cal P}_1(S)$ be diffuse, i.e., $\nu_0(\{s\})=0$ for all $s$ in $S$, and $\xi_1, \xi_2, \ldots$ be i.i.d. with common distribution $\nu_0$. Assume that $(V_1, V_2, \ldots)$ and $(\xi_1, \xi_2, \ldots)$  are independent. Define the random measure
\[
\Xi_{\alpha, \theta,\nu_0}=\sum_{i=1}^\infty V_i \delta_{\xi_i}.
\]
The distribution of the descending order statistics $(P_1, P_2, \ldots)$ of $(V_1, V_2, \ldots)$ is called the {\it two-parameter Poisson-Dirichlet distribution}, denoted by PD$(\alpha,\theta)$; the law of $\Xi_{\alpha, \theta,\nu_0}$, denoted by $\Pi_{\alpha,\theta,\nu_0}$, is called the {\it two-parameter Dirichlet process} or the {\it Pitman-Yor process} with concentration parameter $\theta$, discount parameter $\alpha$, and base distribution $\nu_0$.

In addition to the above construction, the two-parameter Poisson-Dirichlet distribution and the Pitman-Yor process can also be constructed from subordinators (\cite{Kingman75,  PY}), the P\'olya urn scheme (\cite{BlaMac73, Hop84, FeHo98}), and Dubins and Pitman's Chinese restaurant process (\cite{al85}).

When $\alpha=0$, the distribution of $(V_1, V_2, \ldots)$ is called the GEM distribution in Ewens \cite{Ewen04}, named after Griffiths \cite{Gri80a}, Engen \cite{engen78}, and McCloskey \cite{mcc65} for their pioneering work on the structure. The descending order statistics $(P_1, P_2, \ldots)$ of $(V_1, V_2, \ldots)$ is  Kingman's Poisson-Dirichlet distribution with parameter $\theta$.  The law $\Pi_{0,\theta,\nu_0}$ is Ferguson's Dirichlet process (\cite{Fer73}) with concentration parameter $\theta$ and base distribution $\nu_0$ (\cite{Kingman75}).
We will call the distribution of $(V_1, V_2, \ldots)$ the two-parameter GEM distribution for positive $\alpha$. The GEM distribution can be recovered from the Poisson-Dirichlet distribution through the {\it size-biased permutation} (\cite{mcc65}).  This relation is maintained between the two-parameter GEM distribution and the two-parameter Poisson-Dirichlet distribution. In fact, the relation characterizes the two-parameter Poisson-Dirichlet distribution \cite{Pitman96}.

 The Dirichlet process, the GEM distribution, and the Poisson-Dirichlet distribution arise naturally in many fields including but not limited to Bayesian statistics, ecology, economics, finance,  machine learning, physics, and population genetics. In Bayesian statistics, the Dirichlet process serves as a fundamental prior on the space of probabilities. It plays an important role in cluster analysis, and is used in infinite mixture model  inferring the number of groups in the data (\cite{GV17}).

In population genetics, the Dirichlet process and the Poisson-Dirichlet distribution describe the equilibrium behaviour of a population that evolves under the  parent-independent  mutation and resampling.  More specifically, let
$$
{\nabla}_{\infty}=\bigg\{x=(x_1,x_2,\dots)\in \mathbb{R}^{\infty}:x_1\ge x_2\ge\cdots\ge 0\ {\rm
and}\ \sum_{i=1}^{\infty}x_i= 1\bigg\}
$$
denote the infinite dimensional simplex, and equip it with the product topology.
In \cite{EK2},  a $\nabla_{\infty}$-valued diffusion process $\bf{X}(\cdot)$,  the infinitely-many-neutral-alleles model, is constructed. The generator on appropriate domain has the form
\vspace{-1mm}
\[
\frac{1}{2}\left[\sum_{i,j=1}^{\infty}x_i(\delta_{ij}-x_j)\partial_i\partial_j -\sum_{i=1}^\infty \theta x_i \partial_i\right],
\]
and the process is reversible with reversible measure PD$(0,\theta)$. Since $\bf{X}(\cdot)$ contains only the information of the descending proportions of types in the population, it is an  {\it unlabelled model}.

When the evolution of types is included, one is led to a {\it labelled model}, the reversible Fleming-Viot process with parent-independent mutation rate $\theta$ (\cite{EK}), generated by
\[
\frac{1}{2}\int_S \int_S \frac{\delta^2 F(\mu)}{\delta
\mu(s) \delta \mu(r)}\left(\delta_s(dr)-\mu(dr)\right)\mu(ds)+ \int_S  A \Bigl(\frac{\delta F(\mu)}{\delta \mu(\cdot)}\Bigr)(s)\mu(ds),
\]
where  $F$ is in appropriate domain, $A$ is the parent-independent mutation operator, $\delta_s$ is the  Dirac measure at $s$, and
$$
\frac{\delta F(\mu)}{\delta \mu(s)} = \lim_{\ep \ra 0+ }
\frac{F(\mu + \ep \delta_s)-F(\mu)}{\ep},\ \ \ \ s\in S,\, \mu\in {\cal P}_1(S).
$$
The reversible measure of the Fleming-Viot process is the Dirichlet process $\Pi_{0,\theta,\nu_0}$.  Furthermore it is known  \cite{lsy99}  that, among
the family of all Fleming-Viot processes, the process with the parent-independent mutation is the only one with reversible measure $\Pi_{0,\theta,\nu_0}$.
The Poisson-Dirichlet distribution PD$(0,\theta)$ and the GEM distribution are also characterized as the stationary distribution for certain split-merge Markov chains in \cite{GK01, Pitman2002, Dia-04}.

Many results in the case $\alpha=0$ have natural generalizations to the case $0<\alpha<1$. These include Pitman's sampling formula, the stick-breaking representation, the Chinese restaurant process, and the explicit posterior distributions. But significant differences do exist.  For one thing, the partition property of the Dirichlet process does not hold for the Pitman-Yor process. From Kingman's subordinator representation \cite{Kingman75}, it is clear that $\theta$ and $\alpha$ correspond to the Gamma and stable components, respectively. Thus in Bayesian inferences, the case $\alpha=0$ is more appropriate for modelling data exhibiting clusters of logarithmic scale while the positive $\alpha$ model is suitable for data exhibiting clusters of power law scale.

 There have been intensive work on generalizing the infinitely-many-neutral-alleles model and the Fleming-Viot process with parent-independent mutation to the positive $\alpha$ setting. Tremendous progress has been made in the past two decades.  For unlabelled models, a reversible coagulation-fragmentation Markov process is constructed in \cite{berj08} with the two-parameter Poisson-Dirichlet distribution as the reversible measure.  The GEM  process is constructed and studied in \cite{fenwang07} where the reversible distribution is  the two-parameter GEM distribution. Using the methods developed in \cite{B-O09}, Petrov \cite{P} constructed a $\nabla_{\infty}$-valued diffusion with reversible measure PD$(\alpha,\theta)$. It is considered a natural generalization of  the infinitely-many-neutral-alleles model, and has been well studied in \cite{FS0,FSWX,Ethier14,  CBERS,FS,  Forman-etc23}.  The generator of Petrov's diffusion on appropriate domain has the form
\vspace{-1mm}
\[
\frac{1}{2}\left[\sum_{i,j=1}^{\infty}x_i(\delta_{ij}-x_j)\partial_i\partial_j -\sum_{i=1}^\infty (\alpha+\theta x_i) \partial_i\right].
\]
It is worth noting that the diffusion coefficient is the same as the infinitely-many-neutral-alleles model. The only difference is the $\alpha $ term in the drift.

The construction of the labelled models turns out to be much more complicated. The first model was constructed in \cite{fenwang07}  using the techniques of Dirichlet forms. The model is not natural in the sense that  the masses and the labels are assumed to move independently according to the GEM process and an ergodic process. In \cite{FS}, a diffusion process is constructed for $\alpha=\frac{1}{2}$ and  finite type space $S$. Recently,  using the interval-valued diffusions in \cite{Forman-etc21}, a measure-valued diffusion was constructed in \cite{Forman-etc22}. The process was obtained by normalization and random time change of a branching process with immigration. The Pitman-Yor process is the stationary distribution of the process and the projection of the process to the ordered infinite dimensional simplex is Petrov's diffusion. This makes it a valuable model for the two-parameter generalization of the Fleming-Viot process.  But the generator of the process is not explicit, and  the evolution of labels in the model is less random than the one-parameter Fleming-Viot process. In fact, the existing labels in the process do not change in time unless the corresponding masses become zero. New independent labels selected at random from the base distribution are added to the population after a branching event.  This is different from the one-parameter Fleming-Viot process, where both masses and labels evolve continuously in time as individuals in the population undergoing mutation and resampling.

In this paper, we introduce and study a family of unlabelled and labelled models.  The diffusion coefficients are indexed by a non-negative parameter $\ga$.  For unlabelled models, the diffusion matrix has  the form
\[
\left(\frac{1}{2} x_i\left[x_i^{\ga}\delta_{ij}+x_j(H_{1+\ga}(x)-x^{\ga}_i-x^{\ga}_j)\right]\right)_{1\leq i,j <\infty},
\]
where $H_{1+\ga}(x)=\sum_{i=1}^\infty x_i^{1+\ga}$.
 The case $\ga=0$ corresponds to Petrov's diffusion \cite{P}.  When $\ga=\alpha$, our new unlabelled model  has the  generator
\vspace{-1mm}
\beq
&&\frac{1}{2}\bigg\{\sum\limits_{i,j=1}^\infty x_i\bigg[x_i^{\alpha}\delta_{ij}+x_j\left(H_{1+\alpha}(x)-x^{\alpha}_i-x^{\alpha}_j\right)\bigg]\partial_i\partial_j\\
&&\ \ \ \ \ \ \ \ -\sum\limits_{i=1}^\infty x_i\bigg[(\theta+\alpha+1)\left(x_{i}^{\alpha}-H_{1+\alpha}(x)\right)+\alpha L(x)\bigg]\partial_i\bigg\},
\eeq
where $L(x)$ under  PD$(\alpha,\theta)$  is the $\alpha$-diversity and is related to the local time of certain Markov process.  In particular, under  PD$(\alpha,0)$, $L(x)$ is a multiple of the Mittag-Leffler random variable. In this model, both the diffusion coefficient and the drift depend on $\alpha$. As $\alpha$ approaches zero, we obtain the infinitely-many-neutral-alleles model.

The generator for the labelled model is notationally more involved, and the explicit form is given in Section 3.  As in the unlabelled case, both the diffusion coefficient and the drift depend on $\alpha$, and when $\alpha$ approaches zero the model becomes the Fleming-Viot process with parent-independent mutation.  Our labelled and unlabelled models are also linked through  the projection from ${\cal P}_1(S)$ to $\nabla_{\infty}$.
  The crucial steps in the construction of both the labelled and unlabelled models are the establishment of  integration-by-parts formulas.  These rely heavily on the explicit forms of the corresponding generators.

The remaining of the paper is organized as follows. In Section 2, we construct the unlabelled reversible diffusion associated with the two-parameter Poisson-Dirichlet distribution.  It provides an alternate generalization to the infinitely-many-neutral-alleles model.   The diffusion coefficient in our model is smaller than its one-parameter counterparts and Petrov's diffusion in terms of the Loewner partial order of bilinear forms.  By leveraging the slow mixing property of our model, we explicitly reveal the evolutionary role of \(\alpha \)-diversity,  thereby capturing the dynamical aspects of this fundamental feature of the two-parameter Poisson-Dirichlet distribution. To prove the main result of Section 2, we need to generalize the moments formulas  for ${\rm PD}(\alpha,\theta)$ to fractional powers including the Ewens-Pitman sampling formula. The process constructed is shown to be  ergodic, and starting from any point in  $\nabla_{\infty}$ the process will never leave $\nabla_{\infty}$. We recover the infinitely-many-neutral-alleles model by letting  $\alpha$ tend to zero. In Section 3, we introduce and construct the labelled diffusion associated with the Pitman-Yor process. Here both the masses and their locations evolve in time. The explicit form of the model's generator is derived through extensive trial and error, heavily motivated by the special case where \(\alpha=\frac{1}{2}\). With the generator identified explicitly, we are able to establish the integration-by-parts formula and the closability of the corresponding pre-Dirichlet forms.  By projecting  labelled models  to $\nabla_{\infty}$, we obtain the Dirichlet forms for the unlabelled model in Section 2. We are also able to generalize our construction for the Pitman-Yor process with symmetric selections.  In Section 4, we consider the finite dimensional approximations to the infinite dimensional model in the case $\alpha=\frac{1}{2}$. A finite dimensional Markov chain model is constructed to illustrate the population evolution at the individual level.  This is in parallel to the Wright-Fisher construction of the Fleming-Viot process.   The speciality of the case $\alpha=\frac{1}{2}$  is due to  the fact that the finite dimensional projection of $\Pi_{\alpha,\theta,\nu_0}$ has an explicit density function. Finally, in Section 5, we focus on two issues. Firstly, we investigate the family of diffusion processes with diffusion coefficient index $\ga \geq 1$. These will play a key role in establishing ergodicity of the process in Section 2. Secondly, we consider the model \cite{FS} where $\ga=0$, $\alpha=\frac{1}{2}$, and  show the existence of non-trivial diffusion processes in the Mosco convergence. This is a labelled diffusion model with reversible measure $\Pi_{\frac{1}{2}, \theta,\nu_0}$. Its projection to the ordered simplex is Petrov's diffusion.  But the generator is not known, and its  domain turns out to be much more restrictive than the model in Section 3. This may partially explain the difficulty in obtaining the explicit form of the generator.

We establish many of our proofs using the machinery of Dirichlet forms. For a comprehensive overview of the fundamental concepts, terminology, and key results in the theory of Dirichlet forms, we refer the reader to \cite{Fuku2, MR,CF,  S}.

\section{The infinite dimensional unlabelled model}\setcounter{equation}{0}

In this section, we introduce and study a class of infinite dimensional unlabelled models generalizing Ethier-Kurtz's infinitely-many-neutral-alleles model. The main results include  the construction of the process and some path properties. One important tool in the proof is the generalized Ewens-Pitman sampling formula
for fractional powers.

We fix  $\alpha\in (0,1)$ and $\theta>-\alpha$ throughout this section.  Let $\mathbb{N}$ denote the set of positive integers. The compact space
$$
\overline{\nabla}_{\infty}=\bigg\{x=(x_1,x_2,\dots)\in \mathbb{R}^{\infty}:x_1\ge x_2\ge\cdots\ge 0\ {\rm
and}\ \sum_{i=1}^{\infty}x_i\le 1\bigg\}
$$
is the closure of ${\nabla}_{\infty}$ in $\mathbb{R}^{\infty}$.
 For $x\in \overline{\nabla}_{\infty}$ and $\beta\ge 1$, define
$$
H_{\beta}(x)=\sum_{i=1}^{\infty}x_i^{\beta},
$$
\begin{eqnarray}\label{May5D1}
\left\{
\begin{array}{ll}
a_{ii}(x)=(1-2x_i)x_i^{1+\alpha}+x_i^2H_{1+\alpha}(x),&\ \  i\in\mathbb{N},\\
a_{ij}(x)=x_ix_j\left[H_{1+\alpha}(x)-x^{\alpha}_i-x^{\alpha}_j\right],&\ \ i\not=j,\ i,j\in\mathbb{N},
\end{array}
\right.
\end{eqnarray}
and
\begin{eqnarray*}
L(x)=\left\{
\begin{array}{ll}
\liminf\limits_{i\rightarrow\infty}ix_i^{\alpha},&\ \ {\rm if}\ \liminf\limits_{i\rightarrow\infty}ix_i^{\alpha}<\infty,\\
0,&\ \ {\rm otherwise}.
\end{array}
\right.
\end{eqnarray*}

For notational convenience, we will write $H_\beta$ for $H_\beta(x)$ if it is clear from the context. Similarly, we will write
\[
a=a(x)=(a_{ij}(x))_{1\leq i,j<\infty},
\]
and drop the domain variable $x$ from time to time for functions on $\overline{\nabla}_{\infty}$ when the context is clear. Let ${\cal P}$ denote the algebra generated by $1,H_2(x),H_3(x),\dots$. For $u\in {\cal P}$, define
\begin{eqnarray}\label{May5A}
{\cal G}u(x)&=&\frac{1}{2}\sum\limits_{i,j=1}^\infty a_{ij}(x)\partial_i\partial_ju(x)\nonumber\\
&&-\frac{1}{2}\sum\limits_{i=1}^\infty x_i\bigg\{ (\theta+\alpha+1)\left[x_{i}^{\alpha}-H_{1+\alpha}(x)\right]+\alpha L(x)\bigg\}\partial_iu(x),\ \ \ \ x\in {\overline{\nabla}}_{\infty}.
\end{eqnarray}
Consider the following symmetric form on $L^2({\overline{\nabla}}_{\infty};{\rm PD}(\alpha,\theta))$:
\begin{eqnarray*}
{\cal A}(u,v)&=&\frac{1}{2}\int_{{\overline{\nabla}}_{\infty}}\langle a\nabla u,\nabla v\rangle \,d{\rm PD}(\alpha,\theta)\\
&=&\frac{1}{2}\sum_{i,j=1}^{\infty}\int_{{\overline{\nabla}}_{\infty}}a_{ij}(x)\partial_i u(x)\partial_j v(x) {\rm PD}(\alpha,\theta)(dx),\ \ \ \ u,v\in{\cal P}.
\end{eqnarray*}

Now we can state the main result of this section.
\begin{thm}\label{thm2.1} {\rm (i)} For any $u,v\in{\cal P}$,
\begin{eqnarray}\label{May5B}
{\cal A}(u,v)=-\int_{{\overline{\nabla}}_{\infty}}({\cal G}u) v\,d{\rm PD}(\alpha,\theta).
\end{eqnarray}

\noindent {\rm (ii)}  The bilinear form $({\cal A}, {\cal P})$ is closable on $L^2({\overline{\nabla}}_{\infty};{\rm PD}(\alpha,\theta))$ and its closure  $({\cal A}, D({\cal A}))$ is a regular Dirichlet form.

\noindent {\rm (iii)}  There exists a time-reversible, conservative, diffusion process $(\{X_t\}_{t\ge 0},\{P_x\}_{x\in {\overline{\nabla}}_{\infty}})$ which is associated with $({\cal A}, D({\cal A}))$ and its stationary distribution is  ${\rm PD}(\alpha,\theta)$. Moreover,  there exists an ${\cal A}$-exceptional set $M\subset {\overline{\nabla}}_{\infty}$ such that for any $x\notin M$,
\begin{eqnarray}\label{May5D}
P_x\left\{\sum_{i=1}^{\infty}X_i(t)=1\ {\rm for\ all}\ t\in[0,\infty)\right\}=1.
\end{eqnarray}

\noindent {\rm (iv)}  $({\cal A}, D({\cal A}))$ is irreducible recurrent and $(\{X_t\}_{t\ge 0},\{P_x\}_{x\in {\overline{\nabla}}_{\infty}})$ satisfies the strong law of large numbers, i.e., there exists an ${\cal A}$-exceptional set $M\subset {\overline{\nabla}}_{\infty}$ such that for any $x\notin M$,
\begin{eqnarray*}
\lim_{t\rightarrow\infty}\frac{1}{t}\int_0^tf(X_s)ds=\int_{{\overline{\nabla}}_{\infty}}f\,d{\rm PD}(\alpha,\theta)\ \ P_x\text{-}a.s.,\ \ \ \  \forall f\in L^1({\overline{\nabla}}_{\infty};{\rm PD}(\alpha,\theta)).
\end{eqnarray*}
\end{thm}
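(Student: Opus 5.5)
The integration-by-parts formula (i) is the heart of the matter; (ii)--(iv) then follow from standard Dirichlet form machinery. Since ${\cal P}$ is spanned by monomials $H_{n_1}\cdots H_{n_k}$ with $n_j\ge 2$, by bilinearity it suffices to take $u=\prod_{l=1}^k H_{n_l}$ and $v=\prod_{m=1}^{r}H_{m_l'}$.

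For such $u$ we have $\partial_i u=\sum_l n_l x_i^{n_l-1}\prod_{l'\neq l}H_{n_{l'}}$, with a similar formula for $\partial_i\partial_j u$. Plugging these into $a_{ij}$, every term of $\langle a\nabla u,\nabla v\rangle$ and of $({\cal G}u)v$ becomes a finite linear combination of products of $H_\beta$ with fractional indices $\beta\in\{n,\,n+\alpha\}$, sometimes multiplied by $H_{1+\alpha}$ or $L$. For example,
\[
\sum_{i,j}a_{ij}x_i^{p-1}x_j^{q-1}=H_{p+q+\alpha-1}+H_{1+\alpha}H_pH_q-H_{p+\alpha}H_q-H_pH_{q+\alpha},
\]
and $\sum_i x_i(x_i^\alpha-H_{1+\alpha})x_i^{p-1}=H_{p+\alpha}-H_{1+\alpha}H_p$.

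The main step is therefore to compute the PD$(\alpha,\theta)$-expectations of products $\prod_j H_{\beta_j}$ with real $\beta_j>\alpha$, and of $L\prod_j H_{\beta_j}$. This is the generalized Ewens--Pitman sampling formula for fractional powers. I would derive it by extending Pitman's moment formula
\[
E\Big[\sum_{\text{distinct } i_1,\dots,i_k}\prod_j x_{i_j}^{\beta_j}\Big]=\frac{\prod_{j}\alpha\,\Gamma(\beta_j-\alpha)/\Gamma(1-\alpha)\cdot \Gamma(\theta+1)\,\Gamma(\theta/\alpha+k)}{\alpha\,\Gamma(\theta/\alpha+1)\,\Gamma(\theta+\sum\beta_j)}
\]
from integer to real $\beta_j$. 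The route is the subordinator (or size-biased/GEM) representation: the computation goes through the Lévy measure $c\,x^{-1-\alpha}e^{-x}dx$ with a $\theta$-tilting, and it never uses integrality.

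The term involving $L$ needs separate handling. Under PD$(\alpha,\theta)$ the $\alpha$-diversity has a known joint law with the masses: conditioning on the masses removed by sampling changes $L$ by a factor tied to the stick-breaking remainder, and the density of PD$(\alpha,\theta)$ relative to PD$(\alpha,0)$ is proportional to $L^{\theta/\alpha}$. Using this, I expect $E[L\prod H_{\beta_j}]$ to equal the above expression with $\theta$ shifted to $\theta+\alpha$, suitably normalized. With these formulas in hand, the identity (i) reduces to a rational-function identity in Gamma ratios. I would verify it by grouping terms according to how indices coalesce, i.e. by whether $i=j$ or $i\ne j$ in the pairing of $u$ and $v$ monomials. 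The drift coefficient $\theta+\alpha+1$ and the $\alpha L$ correction are designed exactly to cancel the boundary contributions. This bookkeeping is the main obstacle, and I would first check it with $u=H_p$, $v=1$ and $v=H_q$ before doing the general case.

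For (ii), symmetry of ${\cal A}$ together with (i) shows that $({\cal A},{\cal P})$ has a symmetric nonpositive generator ${\cal G}$ on ${\cal P}$, so the form is closable (Ma--Röckner). The Markov property comes from the fact that the closure is a diffusion-type form whose carré du champ obeys the chain rule on ${\cal P}$, together with nonnegative definiteness of $a$. To see that $a$ is nonnegative definite, write $\langle a\xi,\xi\rangle=\sum_i x_i^{1+\alpha}(\xi_i-\bar\xi)^2$ with $\bar\xi=\sum_j x_j\xi_j$, after checking $\sum x_i\le 1$ on the closure. Regularity holds because ${\cal P}$ is a point-separating subalgebra of $C(\overline{\nabla}_\infty)$ and is hence dense by Stone--Weierstrass.

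For (iii), the standard theory gives a Hunt diffusion on the compact space. Conservativeness follows from $1\in{\cal P}$ with ${\cal A}(1,1)=0$. To prove (\ref{May5D}), I would show that $\overline{\nabla}_\infty\setminus\nabla_\infty$ is ${\cal A}$-exceptional using the capacity test functions $u_n=1-\sum_{i\le n}x_i$ approximated in ${\cal P}$, in the manner of Schmuland's argument for the infinitely-many-alleles model.

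For (iv), irreducibility follows from ${\cal A}(u,u)=0\Rightarrow u$ constant. I would prove this via comparison: since $x_i^{\alpha}\ge x_i^{\ga}$ for $\ga\ge1$, the form dominates a $\ga\ge 1$ form whose ergodicity is established in Section 5. Recurrence follows from $1\in D({\cal A})$. The strong law of large numbers then follows from Fukushima's ergodic theorem.
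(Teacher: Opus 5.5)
Your proposal follows the paper's proof essentially step for step. The shared ingredients are the fractional Ewens--Pitman formula (which the paper obtains from the subordinator representation for nonnegative $\theta$ and by peeling off the first stick-breaking weight for $\theta\in(-\alpha,0)$), the shift $E_{\alpha,\theta}[Lg]\propto E_{\alpha,\theta+\alpha}[g]$ via the $L^{\theta/\alpha}$ density, Gamma-ratio bookkeeping over set partitions, standard Dirichlet-form theory with Schmuland's argument for (\ref{May5D}), and domination by the $\gamma=1$ form of Section 5 for (iv). The one organizational difference is in (i): the paper does not compare both sides of (\ref{May5B}) directly. It proves $\int{\cal G}v\,d{\rm PD}(\alpha,\theta)=0$ and $\int(u{\cal G}v-v{\cal G}u)\,d{\rm PD}(\alpha,\theta)=0$ for monomials, then integrates ${\cal G}(uv)=({\cal G}u)v+u{\cal G}v+\langle a\nabla u,\nabla v\rangle$, which spares it from computing $\int\langle a\nabla u,\nabla v\rangle\,d{\rm PD}(\alpha,\theta)$ term by term.
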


As the key step in  proving Theorem \ref{thm2.1}, we first generalize the Ewens-Sampling formula to the fractional powers.

\subsection{The Ewens-Pitman sampling formula}

\begin{pro}\label{proMay14a} Let $m\in\mathbb{N}$.
For  real numbers $\kappa_1>\alpha, \ldots, \kappa_m>\alpha$,
\begin{eqnarray}\label{Mar22a}
\int_{{\overline{\nabla}}_{\infty}}\sum_{i_1, \ldots, i_m\ {\rm distinct}} \prod_{l=1}^m x_{i_l}^{\kappa_l}\,  {\rm PD}(\alpha,\theta)(dx)= \frac{\Gamma(\theta+1)\prod_{l=1}^{m-1}(\theta+l\alpha)}{\Gamma(\theta+\sum_{l=1}^m\kappa_l)}\prod_{l=1}^m\frac{\Gamma(\kappa_l-\alpha)}{\Gamma(1-\alpha)}.
\end{eqnarray}
\end{pro}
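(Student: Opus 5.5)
The plan is to move the computation from ${\rm PD}(\alpha,\theta)$ to the two-parameter GEM distribution, using the size-biased permutation relation recalled in the Introduction, and then exploit the independence of the stick-breaking variables $U_1,U_2,\ldots$.

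\emph{Step 1: the size-biased identity.} Let $(\tilde V_1,\tilde V_2,\ldots)$ be a size-biased permutation of $x=(x_1,x_2,\ldots)\in\nabla_\infty$. Conditionally on $x$, the probability that $(\tilde V_1,\ldots,\tilde V_m)=(x_{i_1},\ldots,x_{i_m})$ for distinct indices $i_1,\ldots,i_m$ is
\[
\prod_{l=1}^m\frac{x_{i_l}}{1-\sum_{k<l}x_{i_k}}.
\]
Hence, for any nonnegative measurable $f$,
\[
\sum_{i_1,\ldots,i_m\ {\rm distinct}} f(x_{i_1},\ldots,x_{i_m})
= E\Big[f(\tilde V_1,\ldots,\tilde V_m)\prod_{l=1}^m\frac{1-\sum_{k<l}\tilde V_k}{\tilde V_l}\,\Big|\,x\Big],
\]
where only indices with $x_{i_l}>0$ contribute. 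For our $f=\prod_l y_l^{\kappa_l}$ this restriction costs nothing, since each $\kappa_l>\alpha>0$ makes every term with a zero entry vanish. Integrating over ${\rm PD}(\alpha,\theta)$, which is carried by $\nabla_\infty$, and using Tonelli together with the fact that the size-biased permutation of ${\rm PD}(\alpha,\theta)$ is the two-parameter GEM law, the left side of (\ref{Mar22a}) becomes
\[
E\Big[\prod_{l=1}^m V_l^{\kappa_l}\,\frac{1-\sum_{k<l}V_k}{V_l}\Big].
\]
Working throughout with nonnegative integrands avoids any integrability issue at this stage.

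\emph{Step 2: factorization.} Substitute $V_l=U_l\prod_{k<l}(1-U_k)$ and $1-\sum_{k<l}V_k=\prod_{k<l}(1-U_k)$. Then each factor $(1-\sum_{k<l}V_k)/V_l$ equals $1/U_l$. Writing $s_l=\sum_{j>l}\kappa_j$, the integrand becomes $\prod_{l=1}^m U_l^{\kappa_l-1}(1-U_l)^{s_l}$. By independence of the $U_l\sim{\rm Beta}(1-\alpha,\theta+l\alpha)$,
\[
\prod_{l=1}^m\frac{B(\kappa_l-\alpha,\ \theta+l\alpha+s_l)}{B(1-\alpha,\ \theta+l\alpha)} .
\]
This is finite exactly because $\kappa_l>\alpha$; this is where the hypothesis enters.

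\emph{Step 3: telescoping.} Expand the Beta functions into Gamma functions. The denominator $\Gamma(\theta+(l-1)\alpha+\kappa_l+s_l)=\Gamma(\theta+(l-1)\alpha+s_{l-1})$ of the $l$-th factor cancels the numerator $\Gamma(\theta+(l-1)\alpha+s_{l-1})$ of the $(l-1)$-th factor. After cancellation, only $\Gamma(\theta+m\alpha)$ (from $l=m$, since $s_m=0$) survives upstairs and $\Gamma(\theta+\sum_l\kappa_l)$ survives downstairs. The normalizing factors contribute
\[
\prod_{l=1}^m\frac{\Gamma(\theta+1+(l-1)\alpha)}{\Gamma(1-\alpha)\,\Gamma(\theta+l\alpha)}.
\]
Combining these with $\Gamma(\theta+m\alpha)$ and using $\Gamma(\theta+1+l\alpha)=(\theta+l\alpha)\Gamma(\theta+l\alpha)$ for $1\le l\le m-1$, one obtains exactly
\[
\frac{\Gamma(\theta+1)\prod_{l=1}^{m-1}(\theta+l\alpha)}{\Gamma(\theta+\sum_l\kappa_l)}\;\prod_l\frac{\Gamma(\kappa_l-\alpha)}{\Gamma(1-\alpha)}.
\]
All Gamma arguments are positive because $\theta>-\alpha$.

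\emph{Main obstacle.} The main obstacle is Step 1: stating the size-biased sampling identity cleanly for several indices, and justifying the interchange of sum and expectation. I would argue by induction on $m$, conditioning on the first $m-1$ picks, and rely on monotone convergence with nonnegative integrands. Steps 2 and 3 are routine bookkeeping.
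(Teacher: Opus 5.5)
Your proof is correct, but it takes a genuinely different route from the paper. I checked the bookkeeping. The weights $(1-\sum_{k<l}V_k)/V_l$ are exactly $1/U_l$, and the resulting Beta moments telescope to $\Gamma(\theta+m\alpha)/\Gamma(\theta+\sum_l\kappa_l)$. Combined with $\Gamma(\theta+1+l\alpha)=(\theta+l\alpha)\Gamma(\theta+l\alpha)$, you land exactly on (\ref{Mar22a}), including $m=1$. The one point you leave implicit: under ${\rm PD}(\alpha,\theta)$ with $\alpha>0$ there are a.s.\ infinitely many positive atoms summing to $1$, so the sequential sampling denominators never vanish.

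The paper instead cites Pitman--Yor for $m=1$ and, for $m\ge 2$, splits according to $\theta$:
\begin{itemize}
\item For $\theta>0$ it uses the subordinator representation, the independence of the Gamma$(\theta,1)$ total mass $\sigma_{\alpha,\theta}$ from the normalized jumps, and the Campbell formula for sums over distinct Poisson points conditionally on $\gamma(\theta/\alpha)$. The factor $\prod_{l=1}^{m-1}(\theta+l\alpha)$ then appears as a moment of $\gamma(\theta/\alpha)$, and the paper divides by $E[\sigma^\kappa_{\alpha,\theta}]$.
\item The case $\theta=0$ is handled by adjusting constants.
\item For $\theta\in(-\alpha,0)$, where that representation is unavailable, it passes to the GEM sequence. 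This needs only that the ranked GEM sequence is ${\rm PD}(\alpha,\theta)$, since the summand is symmetric. It then peels off the first stick $U_1$ and invokes the already-established formula with parameter $\theta+\alpha>0$.
\end{itemize}

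Your argument uses the stronger input that GEM$(\alpha,\theta)$ is the size-biased permutation of ${\rm PD}(\alpha,\theta)$. In exchange, the inverse-inclusion weights let you peel off all $m$ sticks at once. Your route treats every $\theta>-\alpha$ uniformly, avoids the Poisson/subordinator machinery and the case split, and reduces everything to independent Beta integrals. The paper's route, for its part, makes the origin of $\prod_{l=1}^{m-1}(\theta+l\alpha)$ transparent through gamma moments. Its stick-breaking step needs only the weaker ``ranked GEM is PD'' fact.
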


\noindent {\bf Proof.}\ \ By \cite[Corollary 3]{PY}, for $\kappa>\alpha$, we have
$$
\int_{{\overline{\nabla}}_{\infty}}\sum_{i=1}^{\infty}x^{\kappa}_i\,  {\rm PD}(\alpha,\theta)(dx)=\frac{\Gamma(\theta+1)\Gamma(\kappa-\alpha)}{\Gamma(\theta+\kappa)\Gamma(1-\alpha)}.
$$
In what follows, we consider the case that $m\ge 2$.

\noindent (i) Assume that $\theta>0$. Let $(\Omega, {\cal F},P)$ be a probability space and $\{\tau_t: t\geq 0\}$ and $\{\gamma_t: t\geq 0\}$ be two independent subordinators on $(\Omega, {\cal F},P)$ with respective L\'evy measures
\[
x^{-(1+\alpha)}e^{-x}dx\ \  \mbox{and}\ \  x^{-1}e^{-x}dx,\ \ \ \ x>0.\]
Set
\[
\sigma_{\alpha,\theta}= \tau\left(\frac{\alpha}{\Gamma(1-\alpha)} \gamma(\theta/\alpha)\right),
\]
and denote the ranked jump sizes of $\{\tau_t: t\geq 0\}$ over $[0, \frac{\alpha}{\Gamma(1-\alpha)}\gamma(\theta/\alpha)]$ as $\{J_i(\alpha,\theta): i\geq 1\}$. It is known that $\sigma_{\alpha,\theta}$ is a Gamma$(\theta,1)$ random variable independent of $\{\rho_i =\frac{J_i(\alpha,\theta)}{\sigma_{\alpha,\theta}}: i\geq 1\}$. We use $E$ to denote expectation with respect to $P$.

For $\kappa_1>\alpha, \ldots, \kappa_m>\alpha$, and $\kappa=\sum_{i=1}^m \kappa_i$, we have
\begin{eqnarray*}
&&E\left[ \sigma^\kappa_{\alpha,\theta}\right]\int_{{\overline{\nabla}}_{\infty}} \sum_{ i_1, \ldots, i_m\ {\rm distinct}} \prod_{l=1}^mx_{i_l}^{\kappa_l} \,  {\rm PD}(\alpha,\theta)(dx) \\
&=&E\bigg[ \sum_{ i_1, \ldots, i_m\ {\rm distinct}} \prod_{l=1}^mJ_{i_l}^{\kappa_l} \bigg] \\
&=&E\left[E\left.\left[ \sum_{ i_1, \ldots, i_m\ {\rm distinct}} \prod_{l=1}^mJ_{i_l}^{\kappa_l}\,\right|\,\gamma(\theta/\alpha) \right] \right]\\
&=&E\left[\prod_{i=1}^mE\left.\left[\sum_{l=1}^{\infty} J^{\kappa_i}_{l}\,\right|\,\gamma(\theta/\alpha) \right] \right]\\
&=&\bigg[\frac{\alpha}{\Gamma(1-\alpha)}\bigg]^mE\bigg[\gamma^m(\theta/\alpha)\bigg]\prod_{i=1}^m \int_0^{\infty} x^{\kappa_i-1-\alpha} e^{-x}dx,
\end{eqnarray*}
which equals
\[
\frac{\alpha^m\Gamma(\frac{\theta}{\alpha}+m)}{\Gamma(\frac{\theta}{\alpha})}\prod_{i=1}^m \frac{\Gamma(\kappa_i-\alpha)}{\Gamma(1-\alpha)}.
\]
Then,
\begin{eqnarray*}
\int_{{\overline{\nabla}}_{\infty}} \sum_{i_1, \ldots, i_m\ {\rm distinct}} \prod_{l=1}^mx_{i_l}^{\kappa_l}\,  {\rm PD}(\alpha,\theta)(dx) &=& \frac{1}{E[\sigma^\kappa_{\alpha,\theta}]}\frac{\alpha^m\Gamma(\frac{\theta}{\alpha}+m)}{\Gamma(\frac{\theta}{\alpha})}\prod_{l=1}^m \frac{\Gamma(\kappa_l-\alpha)}{\Gamma(1-\alpha)}
\\
&=&\frac{\alpha^m\Gamma(\frac{\theta}{\alpha}+m)}{\Gamma(\frac{\theta}{\alpha})}\prod_{l=1}^m \frac{\Gamma(\kappa_l-\alpha)}{\Gamma(1-\alpha)}\frac{\Gamma(\theta)}{\Gamma(\theta+\kappa)}\\
&=&\frac{\Gamma(\theta+1)\prod_{l=1}^{m-1}(\theta+l\alpha)}{\Gamma(\theta+\kappa)}\prod_{l=1}^m\frac{\Gamma(\kappa_l-\alpha)}{\Gamma(1-\alpha)}.
\end{eqnarray*}

\noindent (ii) The case $\theta=0$ can be dealt with by writing $\Gamma(\theta)/\Gamma(\frac{\theta}{\alpha})$ as $\left(\Gamma(\theta+1)/\Gamma(\frac{\theta}{\alpha} +1)\right)\alpha^{-1}$.

\noindent (iii) Assume that $\theta\in(-\alpha,0)$.  Let $\{U_n: n\geq 1\}$ be independent beta random variables on $(\Omega, {\cal F},P)$ with $U_n$ following the Beta$(1-\alpha, \theta+n\alpha)$ distribution. Define
\[
V_1= U_1,\ V_n =U_n\prod_{i=1}^{n-1}(1-U_i),\ n \geq 2.
\]

For $\kappa_1>\alpha, \ldots, \kappa_m>\alpha$, set $\kappa=\sum_{i=1}^m \kappa_i$.  By symmetry, we have
\[
\int_{{\overline{\nabla}}_{\infty}}\sum_{ i_1, \ldots, i_m\ {\rm distinct}} \prod_{l=1}^mx_{i_l}^{\kappa_l} \,  {\rm PD}(\alpha,\theta)(dx) =E\bigg[ \sum_{ i_1, \ldots, i_m\ {\rm distinct}} \prod_{l=1}^mV_{i_l}^{\kappa_l} \bigg].\]
Write
\[
\sum_{ i_1, \ldots, i_m\ {\rm distinct}} \prod_{l=1}^mV_{i_l}^{\kappa_l}=R_0+ \sum_{j=1}^m R_j \]
with
\[
R_0= \sum_{ i_1, \ldots, i_m\ {\rm distinct\ and}\ >1}\, \prod_{l=1}^m V_{i_l}^{\kappa_l} \]
and
\[
R_j=\sum_{ i_1, \ldots, i_m\ {\rm distinct},\, i_j=1}\, \prod_{l=1}^m V_{i_l}^{\kappa_l}. \]

For $\eta>\alpha$, define
\[
J(\alpha,\theta,\eta)= \frac{\Gamma(\theta+1)\prod_{l=1}^{m-1}(\theta+l\alpha)}{\Gamma(\theta+\eta)}\prod_{l=1}^m\frac{\Gamma(\kappa_l-\alpha)}{\Gamma(1-\alpha)}.
\]
By direct calculation and the sampling formula for non-negative $\theta$, we obtain
\beq
E[R_0]&=&E[(1-U_1)^{\kappa}]J(\alpha, \alpha+\theta, \kappa)\\
&=& \frac{\Gamma(\theta+1)}{\Gamma(1-\alpha)\Gamma(\theta+\alpha)}\frac{\Gamma(1-\alpha)\Gamma(\theta+\alpha+\kappa)}{\Gamma(\theta+1+\kappa)}\\
&&\cdot \frac{\Gamma(\theta+\alpha+1)\prod_{l=1}^{m-1}(\theta+\alpha+l\alpha)}{\Gamma(\theta+\alpha+\kappa)}\prod_{l=1}^m\frac{\Gamma(\kappa_l-\alpha)}{\Gamma(1-\alpha)} \\
&=& \frac{\theta+m\alpha}{\theta+\kappa}J(\alpha,\theta, \kappa).
\eeq
For each $j=1, \ldots, m$, we have
\beq
E[R_j]&=&E[U_1^{\kappa_j}(1-U_1)^{\kappa-\kappa_j}] J(\alpha,\alpha+\theta, \kappa-\kappa_j)\\
&=&\frac{\Gamma(\theta+1)}{\Gamma(1-\alpha)\Gamma(\theta+\alpha)}\frac{\Gamma(\kappa_j+1-\alpha)\Gamma(\theta+\alpha+\kappa-\kappa_j)}{\Gamma(\theta+1+\kappa)}\\
&&\cdot\frac{\Gamma(\theta+\alpha+1)\prod_{l=1}^{m-2}(\theta+\alpha+l\alpha)}{\Gamma(\theta+\alpha+\kappa-\kappa_j)}\prod_{l\neq j}\frac{\Gamma(\kappa_l-\alpha)}{\Gamma(1-\alpha)}
 \\
&=&\frac{\kappa_j-\alpha}{\theta+\kappa}J(\alpha,\theta, \kappa).
\eeq

Putting things together, we obtain
\beq
\int_{{\overline{\nabla}}_{\infty}} \sum_{ i_1, \ldots, i_m\ {\rm distinct}} \prod_{l=1}^mx_{i_l}^{\kappa_l}\,  {\rm PD}(\alpha,\theta)(dx) &=& E[R_0]+\sum_{j=1}^mE[R_j]\\
&=& J(\alpha,\theta,\kappa).\eeq
\hfill $\Box$

For $ f\in L^1({\overline{\nabla}}_{\infty};{\rm PD}(\alpha,\theta))$, denote $E_{\alpha,\theta}[f]=\int_{{\overline{\nabla}}_{\infty}} f d\,{\rm PD}(\alpha,\theta)$.

\begin{pro}\label{pro2}
Let $m\in\mathbb{N}$. For  real numbers $\kappa_1>\alpha, \ldots, \kappa_m>\alpha$,
{\small\begin{eqnarray}\label{Apr5a3}
\int_{{\overline{\nabla}}_{\infty}}L(x)\sum_{i_1, \ldots, i_m\ {\rm distinct}} \prod_{l=1}^mx_{i_l}^{\kappa_l}  \,  {\rm PD}(\alpha,\theta)(dx)  =\frac{\Gamma(\theta+1)\prod_{l=1}^{m}(\theta+l\alpha)}{\alpha\Gamma(1-\alpha)\Gamma(\theta+\sum_{l=1}^m\kappa_l+\alpha)}\prod_{l=1}^m\frac{\Gamma(\kappa_l-\alpha)}{\Gamma(1-\alpha)}.\ \
\end{eqnarray}}

\noindent In particular, for $\kappa>\alpha$,
\begin{eqnarray*}
\int_{{\overline{\nabla}}_{\infty}}L(x)\sum_{i=1}^{\infty}x^\kappa_{i}\,  {\rm PD}(\alpha,\theta)(dx) =\frac{(\frac{\theta}{\alpha}+1)\Gamma(\theta+1)\Gamma(\kappa-\alpha)}{\Gamma(\theta+\kappa+\alpha)[\Gamma(1-\alpha)]^2}.
\end{eqnarray*}
\end{pro}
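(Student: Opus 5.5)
The plan is to reduce Proposition \ref{pro2} to Proposition \ref{proMay14a} by a change of measure. The structural fact I would use is that, for every $\theta>-\alpha$, ${\rm PD}(\alpha,\theta)$ is absolutely continuous with respect to ${\rm PD}(\alpha,0)$, with density a constant multiple of $L(x)^{\theta/\alpha}$. This is the Pitman--Yor change of measure, with $L$ being the $\alpha$-diversity. Granting this, multiplication by $L$ under ${\rm PD}(\alpha,\theta)$ is, up to a constant, the same as passing to ${\rm PD}(\alpha,\theta+\alpha)$. Concretely, I expect
\[
E_{\alpha,\theta}[L f]=c_{\alpha,\theta}\,E_{\alpha,\theta+\alpha}[f], \qquad c_{\alpha,\theta}=\frac{(\theta+\alpha)\Gamma(\theta+1)}{\alpha\Gamma(1-\alpha)\Gamma(\theta+\alpha+1)} .
\]
Applying this with $f=\sum_{i_1,\ldots,i_m\ {\rm distinct}}\prod_l x_{i_l}^{\kappa_l}$ and evaluating $E_{\alpha,\theta+\alpha}[f]$ by Proposition \ref{proMay14a} with $\theta$ replaced by $\theta+\alpha>0$ should give (\ref{Apr5a3}). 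The bookkeeping is
\[
\Gamma(\theta+\alpha+1)\prod_{l=1}^{m-1}(\theta+\alpha+l\alpha)\cdot\frac{\theta+\alpha}{\Gamma(\theta+\alpha+1)} = \prod_{l=1}^{m}(\theta+l\alpha),
\]
and the denominator $\Gamma(\theta+\alpha+\kappa)$ matches. The special case is just $m=1$, using $\theta+\alpha=\alpha(\frac{\theta}{\alpha}+1)$.

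To make the change of measure rigorous with the paper's normalisation of $L$ (a $\liminf$ of $ix_i^\alpha$), I would work with the subordinator picture of case (i) in the proof of Proposition \ref{proMay14a}.

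First, take $\theta=0$. Let $\tau$ be a stable-type subordinator run up to a time $t$, and consider its ranked jumps $J_i$. Use the Lévy measure $\frac{\alpha}{\Gamma(1-\alpha)}x^{-1-\alpha}dx$, or the tempered one, since tempering does not affect small jumps. The number of jumps exceeding $\varepsilon$ is Poisson with mean asymptotic to $t\varepsilon^{-\alpha}/\Gamma(1-\alpha)$. The strong law for this counting process gives $iJ_i^\alpha\to t/\Gamma(1-\alpha)$ a.s. Hence, for $x_i=J_i/\sigma$, the limit $ix_i^\alpha\to t/(\Gamma(1-\alpha)\sigma^\alpha)$ exists a.s., so the $\liminf$ in the definition of $L$ is a genuine limit. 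This identifies $L$ as $t\,\Gamma(1-\alpha)^{-1}\sigma^{-\alpha}$, which is a functional of $x$ alone under ${\rm PD}(\alpha,0)$.

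Next, recall that ${\rm PD}(\alpha,\theta)$ arises from ${\rm PD}(\alpha,0)$ by weighting with $\sigma^{-\theta}$, where $\sigma$ is the unnormalised total of the $\alpha$-stable subordinator. Equivalently, the weight is a constant times $L^{\theta/\alpha}$. The constant comes from the negative moment $E[\sigma^{-\theta}]=\Gamma(\theta/\alpha+1)/\Gamma(\theta+1)$ in the standard normalisation. Since ${\rm PD}(\alpha,\theta)$ and ${\rm PD}(\alpha,\theta+\alpha)$ are then both explicit $L$-power tilts of ${\rm PD}(\alpha,0)$, their ratio is $L$ times an explicit constant, which yields the displayed identity.

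As a consistency check, the $m=0$ instance ($f\equiv1$) should reproduce the known mean $E_{\alpha,\theta}[L]=c_{\alpha,\theta}$.

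The main obstacle is fixing the constant exactly. It depends on matching three normalisations: the Lévy measure (the factor $\alpha/\Gamma(1-\alpha)$), the definition of $L$ as $\lim ix_i^\alpha$ rather than the usual $\lim i^{-1}$-type or $\varepsilon^{\alpha}N_\varepsilon$-type definitions, and the negative moments of the stable variable. My first approach is to check the constant against the $m=0$ and $m=1$ cases computed directly in the $\theta=0$ model.

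If the change-of-measure route becomes messy for $\theta\in(-\alpha,0)$, I would fall back on the stick-breaking decomposition of case (iii) in the proof of Proposition \ref{proMay14a}. The key observation there is that $L$ is unchanged by removing the first atom and rescaling: removing $V_1$ and dividing by $(1-U_1)$ sends $L$ to $(1-U_1)^{-\alpha}L'$. So the same $R_0,R_j$ splitting and beta integrals, now with exponents shifted by $\alpha$, push the identity from $\theta+\alpha$ down to $\theta$.
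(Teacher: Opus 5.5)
Your proposal is correct and follows the paper's route. The paper also invokes the Pitman--Yor change of measure (their Proposition 14, density proportional to $L^{\theta/\alpha}$ relative to ${\rm PD}(\alpha,0)$) to obtain $E_{\alpha,\theta}[Lg]=\frac{(\frac{\theta}{\alpha}+1)\Gamma(\theta+1)}{\Gamma(\theta+\alpha+1)\Gamma(1-\alpha)}E_{\alpha,\theta+\alpha}[g]$, which is exactly your $c_{\alpha,\theta}$, and then applies Proposition \ref{proMay14a} with $\theta+\alpha$. The paper simply cites that density rather than rederiving it from the stable subordinator, and since the tilt is valid for all $\theta>-\alpha$, your stick-breaking fallback for $\theta\in(-\alpha,0)$ is not needed.
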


\noindent {\bf Proof.}\ \ Let $g$ be a non-negative measurable function on ${\overline{\nabla}}_{\infty}$.  By \cite[Proposition 14]{PY}, we get
\begin{eqnarray*}
E_{\alpha,\theta}[Lg]&=&\frac{\Gamma(\theta+1)[\Gamma(1-\alpha)]^{\frac{\theta}{\alpha}}}{\Gamma(\frac{\theta}{\alpha}+1)}E_{\alpha,0}\left[L^{\frac{\theta}{\alpha}+1}g\right]\nonumber\\
&=&\frac{\Gamma(\theta+1)[\Gamma(1-\alpha)]^{\frac{\theta}{\alpha}}}{\Gamma(\frac{\theta}{\alpha}+1)}\frac{\Gamma(\frac{\theta+\alpha}{\alpha}+1)}{\Gamma(\theta+\alpha+1)[\Gamma(1-\alpha)]^{\frac{\theta+\alpha}{\alpha}}}E_{\alpha,\theta+\alpha}[g]\nonumber\\
&=&\frac{(\frac{\theta}{\alpha}+1)\Gamma(\theta+1)}{\Gamma(\theta+\alpha+1)\Gamma(1-\alpha)}E_{\alpha,\theta+\alpha}[g],
\end{eqnarray*}
which together with (\ref{Mar22a}) implies (\ref{Apr5a3}).\hfill $\Box$

\subsection{Proof of Theorem \ref{thm2.1}}

(i) Let $v=\prod_{r=1}^lH_{n_r},$
where $n_1,\dots,n_l\in\{2,3,\dots\}$  and $l\in\mathbb{N}$. Denote
$n=\sum_{r=1}^ln_r.
$
We have
\begin{eqnarray}\label{May9e}
{\cal G}H_{n_r}(x)
&=&\frac{n_r(n_{r}-1)}{2}\sum_{i=1}^{\infty}\left\{(1-2x_i)x_i^{n_r-1+\alpha}+H_{1+\alpha}(x)x_i^{n_r}\right\}\nonumber\\
&&-\frac{n_r}{2}\sum\limits_{i=1}^\infty x_i^{n_r}\left\{ (\theta+\alpha+1)\left[x_{i}^{\alpha}-H_{1+\alpha}(x)\right]+\alpha L(x)\right\}\nonumber\\
&=&\frac{n_r}{2}\bigg\{\left[(\theta+\alpha+n_{r})H_{1+\alpha}(x)-\alpha L(x)\right]H_{n_r}(x)+(n_{r}-1)H_{n_r-1+\alpha}(x)\nonumber\\
&&\ \ \ \ \ \ -\left(\theta+\alpha+2n_r-1\right)H_{n_r+\alpha}(x)\bigg\},
\end{eqnarray}
and
\begin{eqnarray*}
&&\langle a(x)\nabla H_{n_r}(x),\nabla H_{n_v}(x)\rangle\\
&=&\sum_{i,j=1}^{\infty}a_{ij}(x){n_r}{n_v}x_i^{n_r-1}x_j^{n_v-1}\\
&=&{n_r}{n_v}\bigg(\sum_{i=1}^{\infty}[(1-2x_i)x_i^{n_r+n_v-1+\alpha}+H_{1+\alpha}(x)x_i^{n_r+n_v}]+\sum_{i\not =j}[H_{1+\alpha}(x)-x^{\alpha}_i-x^{\alpha}_j]x_i^{n_r}x_j^{n_v}\bigg)\\
&=&{n_r}{n_v}\left[H_{n_r+n_v-1+\alpha}(x)+H_{1+\alpha}(x)H_{n_r}(x)H_{n_v}(x)-H_{n_r+\alpha}(x)H_{n_v}(x)-H_{n_r}(x)H_{n_v+\alpha}(x)\right].
\end{eqnarray*}
Then,
\begin{eqnarray}\label{May14A}
{\cal G}v
&=&\sum_{r=1}^l{\cal L}H_{n_r}\prod_{t\not=r}H_{n_t}+\sum_{r<v}\langle a\nabla H_{n_r},\nabla H_{n_v}\rangle\prod_{t\not=r,v}H_{n_t}\nonumber\\
&=&\frac{1}{2}\sum_{r=1}^ln_r\left[(\theta+\alpha+n_{r})H_{1+\alpha}-\alpha L\right]\prod_{r=1}^lH_{n_r}\nonumber\\
&&+\frac{1}{2}\sum_{r=1}^ln_r\left[(n_{r}-1)H_{n_r-1+\alpha}-\left(\theta+\alpha+2n_r-1\right)H_{n_r+\alpha}\right]\prod_{t\not=r}H_{n_t}\nonumber\\
&&+\sum_{r<v}{n_r}{n_v}\left[H_{n_r+n_v-1+\alpha}+H_{1+\alpha}(x)H_{n_r}H_{n_v}-H_{n_r+\alpha}H_{n_v}-H_{n_r}H_{n_v+\alpha}\right]\prod_{t\not=r,v}H_{n_t}\nonumber\\
&=&\frac{n}{2}\left[(\theta+\alpha+n)H_{1+\alpha}-\alpha L\right]\prod_{r=1}^lH_{n_r}\nonumber\\
&&+\frac{1}{2}\sum_{r=1}^ln_r\left[(n_{r}-1)H_{n_r-1+\alpha}-\left(\theta+\alpha+2n_r-1\right)H_{n_r+\alpha}\right]\prod_{t\not=r}H_{n_t}\nonumber\\
&&+\frac{1}{2}\sum_{r\not=v}{n_r}{n_v}H_{n_r+n_v-1+\alpha}\prod_{t\not=r,v}H_{n_t}-\sum_{r=1}^ln_rH_{n_r+\alpha}\sum_{v\not= r}n_v\prod_{t\not=r}H_{n_t}.
\end{eqnarray}

Denote by $\sigma(l,w)$  the collection of partitions $c$ of $\{1,\dots,l\}$ into $w$ non-empty sets $c_1,\dots,c_w$ such that $\min\, c_1<\cdots<\min\, c_w$ and the number of elements in $c_i$ is denoted by $|c_i|$. For $1\le j\le w$, define
$$
N_j=\sum_{i\in c_j}n_i.
$$
Write
\begin{eqnarray*}
\int_{{\overline{\nabla}}_{\infty}}{\cal G}v\,d{\rm PD}(\alpha,\theta)=\sum_{w=1}^{l}\sum_{c\in \sigma(l,w)}\Xi(c)\frac{\Gamma(\theta+1)\prod_{j=1}^{w-1}(\theta+j\alpha)}{\Gamma(\theta+n)}\prod_{j=1}^w\frac{\Gamma(N_j-\alpha)}{\Gamma(1-\alpha)},
\end{eqnarray*}
where
{\small\begin{eqnarray*}
\Xi(c)
&=&\frac{n(\theta+\alpha+n)\Gamma(\theta+n)}{2\Gamma(\theta+\alpha+n+1)}\left[\frac{\theta+w\alpha}{\Gamma(1-\alpha)}+\sum_{j=1}^w\frac{\Gamma(N_j+1)}{\Gamma(N_j-\alpha)}\right]-\frac{n(\theta+w\alpha)\Gamma(\theta+n)}{2\Gamma(1-\alpha)\Gamma(\theta+\alpha+n)}\\
&&+\frac{1}{2}\sum_{j=1}^w\sum_{r\in c_j}n_r\Bigg\{\frac{(n_{r}-1)\Gamma(\theta+n)}{\Gamma(\theta+\alpha+n-1)}\frac{\Gamma(N_j-1)}{\Gamma(N_j-\alpha)}-\frac{(\theta+\alpha+2n_r-1)\Gamma(\theta+n)}{\Gamma(\theta+\alpha+n)}\frac{\Gamma(N_j)}{\Gamma(N_j-\alpha)}\Bigg\}\\
&&+\frac{1}{2}\sum_{|c_j|\ge2}\sum_{r\not=v;\,r,v\in c_j}n_rn_v\frac{\Gamma(\theta+n)}{\Gamma(\theta+\alpha+n-1)}\frac{\Gamma(N_j-1)}{\Gamma(N_j-\alpha)}\\
&&-\sum_{j=1}^w\sum_{r\in c_j}n_r(n-n_r)\frac{\Gamma(\theta+n)}{\Gamma(\theta+\alpha+n)}\frac{\Gamma(N_j)}{\Gamma(N_j-\alpha)}
\end{eqnarray*}}

\noindent by (\ref{May14A}), Propositions \ref{proMay14a} and \ref{pro2}. Then, by direct calculation, we obtain
{\small\begin{eqnarray*}
\Xi(c)
&=&\frac{\Gamma(\theta+n)}{\Gamma(\theta+\alpha+n)}\Bigg\{\frac{n}{2}\sum_{j=1}^w\frac{\Gamma(N_j+1)}{\Gamma(N_j-\alpha)}\\
&&\ \ +\frac{\theta+\alpha+n-1}{2}\sum_{j=1}^w\frac{\Gamma(N_j-1)}{\Gamma(N_j-\alpha)}N_j(N_j-1)-\frac{\theta+\alpha+2n-1}{2}\sum_{j=1}^w\frac{\Gamma(N_j)}{\Gamma(N_j-\alpha)}N_j\Bigg\}\\
&=&0,
\end{eqnarray*}}

\noindent which implies
\begin{eqnarray}\label{May14C}
\int_{{\overline{\nabla}}_{\infty}}{\cal G}v\,d{\rm PD}(\alpha,\theta)=0.
\end{eqnarray}

Let $u=\prod_{p=1}^kH_{m_p},
$ where $m_1,\dots,m_k\in\{2,3,\dots\}$  and $k\in\mathbb{N}$. Denote
$ m=\sum_{p=1}^km_p.
$
By (\ref{May14A}), we get
\begin{eqnarray}\label{May14B}
u{\cal G}v
&=&\frac{n}{2}\prod_{p=1}^kH_{m_p}\left[(\theta+\alpha+n)H_{1+\alpha}-\alpha L\right]\prod_{r=1}^lH_{n_r}\nonumber\\
&&+\frac{1}{2}\prod_{p=1}^kH_{m_p}\sum_{r=1}^ln_r\left[(n_{r}-1)H_{n_r-1+\alpha}-\left(\theta+\alpha+2n_r-1\right)H_{n_r+\alpha}\right]\prod_{t\not=r}H_{n_t}\nonumber\\
&&+\frac{1}{2}\prod_{p=1}^kH_{m_p}\sum_{r\not=v}{n_r}{n_v}H_{n_r+n_v-1+\alpha}\prod_{t\not=r,v}H_{n_t}-\prod_{p=1}^kH_{m_p}\sum_{r=1}^l{n_r}H_{n_r+\alpha}\sum_{v\not=r}{n_v}\prod_{t\not=r}H_{n_t}.\nonumber\\
&&
\end{eqnarray}

Denote by $\sigma(k+l,w)$  the collection of partitions $c$ of $\{1,\dots,k+l\}$ into $w$ non-empty sets $c_1,\dots,c_w$ such that $\min\, c_1<\cdots<\min\, c_w$ and the number of elements in $c_i$ is denoted by $|c_i|$. Then, for every $1\le w\le k+l$ and $c$ in $\sigma(k+l,w)$ there is a unique way of finding a partition $b$ of $\{1,\dots,k\}$ and a partition $d$ of $\{1,\dots,l\}$ such that
$$
b_i\bigcup d_i\not=\emptyset,\ c_i=b_i\bigcup\{k+j:j\in d_i\},\ i=1,\dots,w.
$$
For $1\le j\le w$, define
$$
M_j=\sum_{i\in b_j}m_i,\ N_j=\sum_{i\in d_j}n_i,\ \eta_j=M_j+N_j.
$$
Write $c=(b,d)$,
\begin{eqnarray*}
\int_{{\overline{\nabla}}_{\infty}}u{\cal G}v\,d{\rm PD}(\alpha,\theta)=\sum_{w=1}^{k+l}\sum_{(b,d)\in \sigma(k+l,w)}\Lambda(b,d)\frac{\Gamma(\theta+1)\prod_{j=1}^{w-1}(\theta+j\alpha)}{\Gamma(\theta+m+n)}\prod_{j=1}^w\frac{\Gamma(\eta_j-\alpha)}{\Gamma(1-\alpha)},
\end{eqnarray*}
and
\begin{eqnarray*}
\int_{{\overline{\nabla}}_{\infty}}v{\cal G}u\,d{\rm PD}(\alpha,\theta)=\sum_{w=1}^{k+l}\sum_{(b,d)\in \sigma(k+l,w)}\Upsilon(b,d)\frac{\Gamma(\theta+1)\prod_{j=1}^{w-1}(\theta+j\alpha)}{\Gamma(\theta+m+n)}\prod_{j=1}^w\frac{\Gamma(\eta_j-\alpha)}{\Gamma(1-\alpha)}.
\end{eqnarray*}

By (\ref{May14B}), Propositions \ref{proMay14a} and \ref{pro2},  we get
{\small\begin{eqnarray*}
&&\Lambda(b,d)\nonumber\\
&=&\frac{n(\theta+\alpha+n)\Gamma(\theta+m+n)}{2\Gamma(\theta+\alpha+m+n+1)}\left[\frac{\theta+w\alpha}{\Gamma(1-\alpha)}+\sum_{j=1}^w\frac{\Gamma(\eta_j+1)}{\Gamma(\eta_j-\alpha)}\right]-\frac{n(\theta+w\alpha)\Gamma(\theta+m+n)}{2\Gamma(1-\alpha)\Gamma(\theta+\alpha+m+n)}\nonumber\\
&&+\frac{1}{2}\sum_{|d_j|\ge1}\sum_{r\in d_j}n_r\bigg\{\frac{(n_{r}-1)\Gamma(\theta+m+n)}{\Gamma(\theta+\alpha+m+n-1)}\frac{\Gamma(\eta_j-1)}{\Gamma(\eta_j-\alpha)}-\frac{(\theta+\alpha+2n_r-1)\Gamma(\theta+m+n)}{\Gamma(\theta+\alpha+m+n)}\frac{\Gamma(\eta_j)}{\Gamma(\eta_j-\alpha)}\bigg\}\nonumber\\
&&+\frac{1}{2}\sum_{|d_j|\ge2}\sum_{r\not=v;\,r,v\in d_j}n_rn_v\frac{\Gamma(\theta+m+n)}{\Gamma(\theta+\alpha+m+n-1)}\frac{\Gamma(\eta_j-1)}{\Gamma(\eta_j-\alpha)}\nonumber\\
&&-\sum_{|d_j|\ge1}\sum_{r\in d_j}n_r(n-n_r)\frac{\Gamma(\theta+m+n)}{\Gamma(\theta+\alpha+m+n)}\frac{\Gamma(\eta_j)}{\Gamma(\eta_j-\alpha)}\nonumber\\
&=&\frac{\Gamma(\theta+m+n)}{\Gamma(\theta+\alpha+m+n)}\bigg\{-\frac{mn(\theta+w\alpha)}{2(\theta+\alpha+m+n)\Gamma(1-\alpha)}+\frac{n(\theta+\alpha+n)}{2(\theta+\alpha+m+n)}\sum_{j=1}^w\frac{\Gamma(\eta_j+1)}{\Gamma(\eta_j-\alpha)}\nonumber\\
&&\ \ +\frac{\theta+\alpha+m+n-1}{2}\sum_{|d_j|\ge1}\frac{\Gamma(\eta_j-1)}{\Gamma(\eta_j-\alpha)}N_j(N_j-1)-\frac{\theta+\alpha+2n-1}{2}\sum_{|d_j|\ge1}\frac{\Gamma(\eta_j)}{\Gamma(\eta_j-\alpha)}N_j\bigg\}\nonumber\\
&=&\frac{\Gamma(\theta+m+n)}{\Gamma(\theta+\alpha+m+n)}\bigg\{-\frac{mn(\theta+w\alpha)}{2(\theta+\alpha+m+n)\Gamma(1-\alpha)}+\frac{n(\theta+\alpha+n)}{2(\theta+\alpha+m+n)}\sum_{j=1}^w\frac{\Gamma(\eta_j+1)}{\Gamma(\eta_j-\alpha)}\nonumber\\
&&\ \ +\frac{\theta+\alpha+m+n-1}{2}\sum_{j=1}^w\frac{\Gamma(\eta_j-1)}{\Gamma(\eta_j-\alpha)}N_j(N_j-1)-\frac{\theta+\alpha+2n-1}{2}\sum_{j=1}^w\frac{\Gamma(\eta_j)}{\Gamma(\eta_j-\alpha)}N_j\bigg\}.
\end{eqnarray*}}

\noindent Similarly, we get
{\small\begin{eqnarray}\label{May23b}
&&\Upsilon(b,d)
=\frac{\Gamma(\theta+m+n)}{\Gamma(\theta+\alpha+n+m)}\bigg\{-\frac{mn(\theta+w\alpha)}{2(\theta+\alpha+m+n)\Gamma(1-\alpha)}\nonumber\\
&& \hspace{7cm}+\frac{m(\theta+\alpha+m)}{2(\theta+\alpha+m+n)}\sum_{j=1}^w\frac{\Gamma(\eta_j+1)}{\Gamma(\eta_j-\alpha)}\\
&&\ \ \ \ +\frac{\theta+\alpha+m+n-1}{2}\sum_{j=1}^w\frac{\Gamma(\eta_j-1)}{\Gamma(\eta_j-\alpha)}M_j(M_j-1)
-\frac{\theta+\alpha+2m-1}{2}\sum_{j=1}^w\frac{\Gamma(\eta_j)}{\Gamma(\eta_j-\alpha)}M_j\bigg\}.\nonumber
\end{eqnarray}}

\noindent Then, by direct calculation, we obtain
{\small\begin{eqnarray*}
&&\Lambda(b,d)-\Upsilon(b,d)\\
&=&\frac{\Gamma(\theta+m+n)}{\Gamma(\theta+\alpha+m+n)}\Bigg\{\frac{n-m}{2}\sum_{j=1}^w\frac{\Gamma(\eta_j+1)}{\Gamma(\eta_j-\alpha)}+\frac{\theta+\alpha+m+n-1}{2}\sum_{j=1}^w\frac{\Gamma(\eta_j)}{\Gamma(\eta_j-\alpha)}(N_j-M_j)\\
&&\ \ -\frac{\theta+\alpha+2n-1}{2}\sum_{j=1}^w\frac{\Gamma(\eta_j)}{\Gamma(\eta_j-\alpha)}N_j+\frac{\theta+\alpha+2m-1}{2}\sum_{j=1}^w\frac{\Gamma(\eta_j)}{\Gamma(\eta_j-\alpha)}M_j\Bigg\},
\end{eqnarray*}}

\noindent which simplifies to
{\small\begin{eqnarray*}
&&\frac{\Gamma(\theta+m+n)}{\Gamma(\theta+\alpha+m+n)}\Bigg\{\frac{n-m}{2}\sum_{j=1}^w\frac{\Gamma(\eta_j)}{\Gamma(\eta_j-\alpha)}(N_j+M_j)+\frac{m+n}{2}\sum_{j=1}^w\frac{\Gamma(\eta_j)}{\Gamma(\eta_j-\alpha)}(N_j-M_j)\\
&&\ \ -n\sum_{j=1}^w\frac{\Gamma(\eta_j)}{\Gamma(\eta_j-\alpha)}N_j+m\sum_{j=1}^w\frac{\Gamma(\eta_j)}{\Gamma(\eta_j-\alpha)}M_j\Bigg\}\\
&=&0.
\end{eqnarray*}}

\noindent Hence,
\begin{eqnarray}\label{Apr30b}
\int_{{\overline{\nabla}}_{\infty}}(u{\cal G}v-v{\cal G}u)\,d{\rm PD}(\alpha,\theta)=0.
\end{eqnarray}

By (\ref{May5A}), we find that
\begin{eqnarray}\label{q2}
{\cal G}(uv)(x)={\cal G}u(x)v(x)+u(x){\cal G}v(x)+\sum_{i,j=1}^{\infty}a_{ij}(x)\partial_iu(x)\partial_j v(x),\ \ \ \ x\in {\overline{\nabla}}_{\infty}.
\end{eqnarray}
Taking integration on both sides of (\ref{q2}), by (\ref{May14C}), we obtain
\begin{eqnarray*}
0&=&\int_{{\overline{\nabla}}_{\infty}}{\cal G}u(x)v(x)
{\rm PD}(\alpha,\theta)(dx)+\int_{{\overline{\nabla}}_{\infty}}u(x){\cal G}v(x)
{\rm PD}(\alpha,\theta)(dx)\\
&&+\int_{{\overline{\nabla}}_{\infty}}\sum_{i,j=1}^{\infty}a_{ij}(x)\partial_iu(x)\partial_j v(x)
{\rm PD}(\alpha,\theta)(dx),
\end{eqnarray*}
which together with (\ref{Apr30b}) implies (\ref{May5B}).
Therefore, the proof of Part (i) is complete.

\noindent (ii) For  $u\in{\cal P}$ and $x\in {\overline{\nabla}}_{\infty}$, we have
\begin{eqnarray}\label{May6Y1}
&&\langle a(x)\nabla u(x),\nabla u(x)\rangle\nonumber\\
&=&\sum_{i=1}^{\infty}x_i^{1+\alpha}[\partial_iu(x)]^2+H_{1+\alpha}(x)\left(\sum_{i=1}^{\infty}x_i\partial _iu(x)\right)^2-2\sum_{i=1}^{\infty}x_i^{1+\alpha}\partial _iu(x)\sum_{i=1}^{\infty}x_i\partial _iu(x)\nonumber\\
&\ge&2\left(\sum_{i=1}^{\infty}x_i^{1+\alpha}[\partial_iu(x)]^2\right)^{\frac{1}{2}}\left[H_{1+\alpha}(x)\right]^{\frac{1}{2}}\left|\sum_{i=1}^{\infty}x_i\partial _iu(x)\right|-2\sum_{i=1}^{\infty}x_i^{1+\alpha}\partial _iu(x)\sum_{i=1}^{\infty}x_i\partial _iu(x)\nonumber\\
&\ge&0.
\end{eqnarray}
Then, $({\cal A}, {\cal P})$ is non-negative definite, i.e., ${\cal A}(u,u)\ge 0$ for any $u\in {\cal P}$.

By (\ref{May5B}),   $({\cal A}, {\cal P})$ is closable on $L^2({\overline{\nabla}}_{\infty};{\rm PD}(\alpha,\theta))$. This form is clearly Markovian. Since the Markovian property is preserved by closure, $({\cal A}, D({\cal A}))$ is a Dirichlet form. Note that ${\cal P}$ is uniformly dense in $C({\overline{\nabla}}_{\infty})$ and ${\rm PD}(\alpha,\theta)$ is a regular Borel measure which charges every non-empty open set. Then, $({\cal A}, D({\cal A}))$  is a regular Dirichlet form.

\noindent (iii) By Fukushima's celebrated theorem (cf. \cite[Theorem 6.2.1]{Fuku} and \cite[Theorem 1.5.1]{CF}), there exists a time-reversible Markov process $(\{X_t\}_{t\ge 0}$, $\{P_x\}_{x\in {\overline{\nabla}}_{\infty}})$ which is associated with $({\cal A}, D({\cal A}))$ and its stationary distribution is ${\rm PD}(\alpha,\theta)$. Since ${\cal L}1=0$,  $\{X_t\}_{t\ge 0}$ is conservative. Further, by  \cite[Proposition 2.3]{S2}, we find that $({\cal A}, D({\cal A}))$ satisfies the local property. Then,  $\{X_t\}_{t\ge 0}$ is a  diffusion.  Finally, by following exactly the argument in \cite[Proposition 1]{S1} (cf. also \cite[Lemma 5.1.2]{Fuku2}), we deduce that (\ref{May5D}) holds.

\noindent (iv) Let $f$ be an arbitrary function in $D({\cal A})$ satisfying ${\cal A}(f,f)=0$. To complete the proof, it suffices to show that  $f$ must be a constant (cf. \cite[Sections 1.6 and 4.7]{Fuku2}). Let $({\cal A}^{\gamma,\alpha,\theta}, {\cal P})$, $\gamma\ge0$, be the bilinear form given by (\ref{FGHJ}) below. We have  $({\cal A}, {\cal P})=({\cal A}^{\alpha,\alpha,\theta}, {\cal P})$. Then, by (\ref{jkl0}) and Theorem \ref{thm54} (ii), we find that $f\in D({\cal A}^{1,\alpha,\theta})$ and ${\cal A}^{1,\alpha,\theta}(f,f)=0$. Therefore, $f$ is a constant by Theorem \ref{thm54} (iv).
\hfill $\Box$

\subsection{Further properties}

It is well known that if $\alpha=0$ then the generator of the infinitely-many-neutral-alleles model is given by (see \cite{EK2})
\begin{eqnarray}\label{May1d}
{\cal G}u(x)=\frac{1}{2}\sum\limits_{i,j=1}^\infty a_{ij}(x)\partial_i\partial_ju(x)-\frac{\theta}{2}\sum\limits_{i=1}^\infty x_i\partial_iu(x),
\end{eqnarray}
where $a=(a_{ij})_{i,j=1}^{\infty}$ is given by  (\ref{May5D1}) with $\alpha=0$. A comparison between (\ref{May1d}) and (\ref{May5A}) indicates that \(\alpha L\rightarrow\theta\) as \(\alpha\downarrow 0\). We now show that this  is indeed the case.

\begin{pro}
$$
\lim_{\alpha\downarrow 0 }E_{\alpha,\theta}[(\alpha L-\theta)^2]=0.
$$
\end{pro}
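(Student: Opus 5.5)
Expanding the square, $E_{\alpha,\theta}[(\alpha L-\theta)^2]=\alpha^2E_{\alpha,\theta}[L^2]-2\alpha\theta E_{\alpha,\theta}[L]+\theta^2$. So it is enough to compute the first two moments of $L$ under ${\rm PD}(\alpha,\theta)$ exactly and then let $\alpha\downarrow 0$ with $\theta$ fixed. (Since $\theta>-\alpha$ for all small $\alpha$, this forces $\theta\ge 0$.)

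\emph{Step 1: the first moment.} Take $g\equiv 1$ in the identity from the proof of Proposition \ref{pro2}, which comes from \cite[Proposition 14]{PY}. This gives
$$E_{\alpha,\theta}[L]=\frac{(\frac{\theta}{\alpha}+1)\Gamma(\theta+1)}{\Gamma(\theta+\alpha+1)\Gamma(1-\alpha)}.$$
Equivalently, one can apply Proposition \ref{pro2} with $m=1$ and $\kappa=1$, because $\sum_i x_i=1$ almost surely under ${\rm PD}(\alpha,\theta)$. Hence
$$\alpha E_{\alpha,\theta}[L]=\frac{(\theta+\alpha)\Gamma(\theta+1)}{\Gamma(\theta+\alpha+1)\Gamma(1-\alpha)}\longrightarrow\theta.$$

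\emph{Step 2: the second moment.} Take $g=L$ (nonnegative and measurable) in the same identity:
$$E_{\alpha,\theta}[L^2]=\frac{(\frac{\theta}{\alpha}+1)\Gamma(\theta+1)}{\Gamma(\theta+\alpha+1)\Gamma(1-\alpha)}\,E_{\alpha,\theta+\alpha}[L].$$
Then apply Step 1 with $\theta$ replaced by $\theta+\alpha$:
$$E_{\alpha,\theta+\alpha}[L]=\frac{(\frac{\theta}{\alpha}+2)\Gamma(\theta+\alpha+1)}{\Gamma(\theta+2\alpha+1)\Gamma(1-\alpha)}.$$
Multiplying,
$$\alpha^2E_{\alpha,\theta}[L^2]=\frac{(\theta+\alpha)(\theta+2\alpha)\Gamma(\theta+1)}{\Gamma(\theta+2\alpha+1)\Gamma(1-\alpha)^2}.$$
By continuity of $\Gamma$ at $\theta+1\ge 1$ and at $1$, this tends to $\theta^2$ as $\alpha\downarrow 0$.

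\emph{Step 3: conclusion.} Combining the two limits, $E_{\alpha,\theta}[(\alpha L-\theta)^2]\to\theta^2-2\theta^2+\theta^2=0$.

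The only delicate point is justifying the iterated use of the change-of-measure identity with $g=L$. That identity was stated for nonnegative measurable $g$, so it applies directly to $L\ge 0$, and the value $+\infty$ is allowed a priori. I must also check that $\theta+\alpha$ is an admissible parameter; it is, since $\theta+\alpha>-\alpha$. Finally, the case $\theta=0$ needs a remark: the factor $\frac{\theta}{\alpha}+1$ is still well defined there, and the same formulas give limits of $0$, so the case is harmless.
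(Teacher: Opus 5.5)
Your proof is correct and follows essentially the paper's argument: both compute the first two moments of $L$ exactly from the Pitman--Yor change of measure (\cite[Proposition 14]{PY}) and then let $\alpha\downarrow 0$. The only differences are cosmetic. The paper obtains the general moment $E_{\alpha,\theta}[L^n]$ from the Mittag-Leffler moments under ${\rm PD}(\alpha,0)$ and concludes via mean and variance. You instead iterate the shifted identity $\theta\mapsto\theta+\alpha$ from the proof of Proposition \ref{pro2} and expand the square, which gives the same formulas.
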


\noindent {\bf Proof.}\ \ Denote
\[
C(\alpha,\theta)= \frac{\Gamma(\theta+1)}{\Gamma(\frac{\theta}{\alpha}+1)}[\Gamma(1-\alpha)]^{\frac{\theta}{\alpha}}.
\]
By \cite[Proposition 14]{PY}, we get
$$
{E}_{\alpha,0}\left[L^{\frac{\theta}{\alpha}}\right]=\frac{1}{C(\alpha,\theta)}.
$$
Then, for any $n\in\mathbb{N}$,
\beq
{E}_{\alpha,\theta}[L^n]&=&C(\alpha,\theta){E}_{\alpha,0}\left[L^{n+\frac{\theta}{\alpha}}\right]\\
&=&C(\alpha,\theta)\frac{1}{C(\alpha,n\alpha+\theta)}\\
&=& C(\alpha,\theta)\frac{\Gamma(n+\frac{\theta}{\alpha} +1)}{\Gamma(n\alpha+\theta+1)}\frac{1}{[\Gamma(1-\alpha)]^{n+\frac{\theta}{\alpha}}}\\
&=& \frac{\Gamma(\theta+1)}{\Gamma(n\alpha+\theta+1)}\frac{\Gamma(\frac{\theta}{\alpha}+n+1)}{\Gamma(\frac{\theta}{\alpha}+1)}\frac{1}{[\Gamma(1-\alpha)]^n}.
\eeq
This implies that
\beq
{E}_{\alpha,\theta}[(\alpha L)^2]&=&  \frac{\Gamma(\theta+1)}{\Gamma(2\alpha+\theta+1)}\frac{1}{[\Gamma(1-\alpha)]^2}(\theta+2\alpha)(\theta+\alpha),\\
{E}_{\alpha,\theta}[\alpha L]&=&  \frac{\Gamma(\theta+1)}{\Gamma(\alpha+\theta+1)}\frac{1}{\Gamma(1-\alpha)}(\theta+\alpha) \\
&\ra& \theta\ {\rm as}\  \alpha \downarrow 0,\\
E_{\alpha,\theta}\left[\left(\alpha L-{E}_{\alpha,\theta}[\alpha L]\right)^2\right]&=&\frac{\Gamma(\theta+1)}{\Gamma(2\alpha +\theta+1)[\Gamma(1-\alpha)]^2}\bigg\{ \alpha \theta+\alpha^2-\bigg[\frac{\Gamma(\theta+1)}{\Gamma(\theta+2\alpha+1)}-1\bigg](\theta+\alpha)^2\bigg\}\\
&\rightarrow&0\ {\rm as}\ \alpha \downarrow 0.
\eeq
Therefore, the proof is complete.
\hfill $\Box$
\vskip 0.3cm
Denote by ${\cal B}({\overline{\nabla}}_{\infty})$ the Borel $\sigma$-algebra of ${\overline{\nabla}}_{\infty}$ and $B_b({\overline{\nabla}}_{\infty})$ the set of all bounded measurable functions on ${\overline{\nabla}}_{\infty}$. Define the occupation time process by
$$
L_t(C)=\frac{1}{t}\int_0^t1_C(X_s)ds,\ \ \ \ C\in  {\cal B}({\overline{\nabla}}_{\infty}),\, t>0.
$$
Let ${\cal P}_1({\overline{\nabla}}_{\infty})$ be the space of all probability measures on ${\overline{\nabla}}_{\infty}$. We  equip ${\cal P}_1({\overline{\nabla}}_{\infty})$ with the $\tau$-topology, which is generated by open sets of the form
$$
U(\nu;\varepsilon,F)=\left\{\mu\in{\cal P}_1({\overline{\nabla}}_{\infty}):\left|\int_{{\overline{\nabla}}_{\infty}}F\,d(\mu-\nu)\right|<\varepsilon\right\},
$$
where $\nu\in {\cal P}_1({\overline{\nabla}}_{\infty})$, $\varepsilon>0$ and $F\in B_b({\overline{\nabla}}_{\infty})$.

By Theorem \ref{thm2.1} (iv) and \cite[Theorems 1 and 2]{Mu}, we obtain the following large deviation  result.
\begin{pro}
Let $U$ be a $\tau$-open and $K$  a $\tau$-compact subset of ${\cal P}_1({\overline{\nabla}}_{\infty})$. Then, for ${\cal A}$-q.e. $x\in {\overline{\nabla}}_{\infty}$,
$$
\liminf_{t\rightarrow\infty}\frac{1}{t}\log P_x(L_t \in U)\ge -\inf\{{\cal A}(u,u):u\in D({\cal A}), u^2{\rm PD}(\alpha,\theta)\in U\},
$$
and
\begin{eqnarray*}
&&\inf\bigg\{\sup_{x\in{\overline{\nabla}}_{\infty}\backslash N }\limsup_{t\rightarrow\infty}\frac{1}{t}\log P_x(L_t \in K): N\subset {\overline{\nabla}}_{\infty}, N\ {\rm is\ }{\cal A}{\text-exceptional}\bigg\}\\
&\le&-\inf\{{\cal A}(u,u):u\in D({\cal A}), u^2{\rm PD}(\alpha,\theta)\in K\}.
\end{eqnarray*}
\end{pro}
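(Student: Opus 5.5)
The plan is to derive both bounds from the general Donsker--Varadhan type large deviation results for occupation measures of symmetric Markov processes associated with Dirichlet forms, namely \cite[Theorems 1 and 2]{Mu}, rather than from any specific feature of the two-parameter model. These theorems require the following: a quasi-regular (here regular) symmetric Dirichlet form on $L^2$ of a probability measure; an associated Hunt process; and irreducibility together with recurrence, which yield the ergodic behaviour needed to identify the rate function. All of these have already been established, so the work is a verification of hypotheses.

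First I will record the setting. By Theorem \ref{thm2.1} (ii), $({\cal A},D({\cal A}))$ is a regular Dirichlet form on $L^2({\overline{\nabla}}_{\infty};{\rm PD}(\alpha,\theta))$ over the compact metrizable space ${\overline{\nabla}}_{\infty}$. ${\rm PD}(\alpha,\theta)$ is a probability measure, and by Theorem \ref{thm2.1} (iii) there is an associated conservative, symmetric diffusion $X$. The occupation measures $L_t$ are therefore well-defined elements of ${\cal P}_1({\overline{\nabla}}_{\infty})$.

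Next I will invoke Theorem \ref{thm2.1} (iv), which gives irreducibility and recurrence of $({\cal A},D({\cal A}))$; this is exactly the extra hypothesis in \cite{Mu}. The rate function there is
\[
I(\mu)={\cal A}(\sqrt{f},\sqrt{f})
\]
if $\mu=f\,{\rm PD}(\alpha,\theta)$ with $\sqrt{f}\in D({\cal A})$, and $I(\mu)=+\infty$ otherwise. Writing $u=\sqrt{f}$ turns $\inf_{\mu\in U}I(\mu)$ into $\inf\{{\cal A}(u,u):u\in D({\cal A}),\,u^2{\rm PD}(\alpha,\theta)\in U\}$. Signed $u$ with $u^2\,{\rm PD}$ a probability are harmless here, since ${\cal A}(|u|,|u|)\le{\cal A}(u,u)$ by the Markov property. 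The $\tau$-topology statements are used exactly as formulated in \cite{Mu}.

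Finally, I will transcribe the lower bound, which holds for q.e. starting point, and the upper bound over $\tau$-compact $K$. The upper bound is phrased with the infimum over ${\cal A}$-exceptional sets $N$ because the process is only determined up to exceptional sets. The main point to check is that the hypotheses of \cite{Mu} match those available here: in particular that "irreducible recurrent" in the sense of \cite[Sections 1.6, 4.7]{Fuku2} is what \cite{Mu} needs, and that no additional absolute continuity of transition functions is required, since the statement is made only for q.e. $x$. I expect this matching to be the only delicate step; everything else is direct citation.
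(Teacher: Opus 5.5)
Your proposal is correct and follows the paper's own route: the paper obtains this proposition in one line by combining \cite[Theorems 1 and 2]{Mu} with the irreducibility and recurrence of $({\cal A},D({\cal A}))$ from Theorem \ref{thm2.1} (iv), exactly as you plan. Your extra remark that ${\cal A}(|u|,|u|)\le{\cal A}(u,u)$ makes signed $u$ harmless in the rate function is a correct small addition.
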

\vskip 0.2cm

Finally, we present a uniqueness result for the invariant measures associated with the operator \(\mathcal{G}\) defined by (\ref{May5A}). Note that invariance under the operator \(\mathcal{G}\) does not automatically equate to invariance under the Markov process \(\{X_t\}_{t\ge 0}\) from Theorem \ref{thm2.1}. In general settings,  an operator may admit an invariant measure even though it fails to generate a well-defined semigroup or diffusion process on the corresponding \(L^{2}\)-space.

Denote
$$
{\cal PD}=\{{\rm PD}(\beta,\tau):\beta\in [0,1),\tau>-\beta\}.
$$
\begin{pro}\label{proMay16a}
Let $\alpha\in(0,1)$ and $\theta>-\alpha$. Then, ${\rm PD}(\alpha,\theta)$ is the unique element in ${\cal PD}$ that  is an invariant measure of ${\cal G}$.
\end{pro}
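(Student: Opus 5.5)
The plan has two parts. Existence follows from (\ref{May14C}): for every $v\in{\cal P}$ we have $\int{\cal G}v\,d{\rm PD}(\alpha,\theta)=0$, so ${\rm PD}(\alpha,\theta)$ is invariant for ${\cal G}$. For uniqueness, fix ${\cal G}$ (with its $\alpha,\theta$) and suppose that ${\rm PD}(\beta,\tau)\in{\cal PD}$ satisfies $\int{\cal G}v\,d{\rm PD}(\beta,\tau)=0$ for all $v\in{\cal P}$. I will test this only against $v=H_2$ and $v=H_3$. By (\ref{May9e}),
\[
{\cal G}H_n=\tfrac n2\Big\{\big[(\theta+\alpha+n)H_{1+\alpha}-\alpha L\big]H_n+(n-1)H_{n-1+\alpha}-(\theta+\alpha+2n-1)H_{n+\alpha}\Big\}.
\]
Every exponent appearing here is $\ge 1+\alpha>\beta$, so Proposition \ref{proMay14a}, applied with $(\alpha,\theta)$ replaced by $(\beta,\tau)$, computes all the $H$-moments. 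For $\beta=0$ this is the classical Ewens case, and the same proof covers it. I will use $E_{\beta,\tau}[H_{1+\alpha}H_n]=E_{\beta,\tau}[H_{n+1+\alpha}]+E_{\beta,\tau}[\sum_{i\ne j}x_i^{1+\alpha}x_j^n]$.

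The first key step is to handle the term $L$, whose definition is tied to the exponent $\alpha$. Under ${\rm PD}(\beta,\tau)$ with $\beta>0$, it is known (and is the basis of \cite[Proposition 14]{PY}) that $ix_i^{\beta}$ converges a.s. to a finite positive limit. Hence, if $\beta>\alpha$, then $ix_i^\alpha=ix_i^\beta x_i^{\alpha-\beta}\to\infty$ and $L=0$ by definition. If $0<\beta<\alpha$, then $ix_i^\alpha\to0$. If $\beta=0$, the $x_i$ decay geometrically, so again $L=0$ a.s. Thus for $\beta\neq\alpha$ the $L$-term drops out. For $\beta=\alpha$, the $L$-term is handled exactly by Proposition \ref{pro2} with $\theta$ replaced by $\tau$.

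In the case $\beta=\alpha$, the computation in part (i) of the proof of Theorem \ref{thm2.1} with $\theta$ replaced by $\tau$ shows the following. The quantity $\int{\cal G}H_2\,d{\rm PD}(\alpha,\tau)$ is an explicit expression in which the $L$-contribution carries the factor $(\tau+\alpha)$, while the polynomial part carries $\theta$. After cancelling common Gamma factors, it reduces to a constant multiple of $(\theta-\tau)$, so invariance forces $\tau=\theta$. I expect this to come out directly from the $\Xi(c)$ computation, where $\theta$ in the drift and $\theta$ in the measure were matched.

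The main obstacle is the case $\beta\neq\alpha$. There the conditions $E_{\beta,\tau}[{\cal G}H_2]=0$ and $E_{\beta,\tau}[{\cal G}H_3]=0$ become two explicit equations in Gamma ratios of the form
\[
\frac{\Gamma(\tau+1)\Gamma(\kappa-\beta)}{\Gamma(\tau+\kappa)\Gamma(1-\beta)}\quad\text{and}\quad\frac{\Gamma(\tau+1)(\tau+\beta)\Gamma(\kappa_1-\beta)\Gamma(\kappa_2-\beta)}{\Gamma(\tau+\kappa_1+\kappa_2)\Gamma(1-\beta)^2}.
\]
I need to show they cannot both hold. My first attempt will be to use the first equation to solve for $\theta$ in terms of $(\beta,\tau)$, which is possible since $\theta$ enters linearly, and substitute into the second to obtain a single function of $(\beta,\tau)$. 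I will then show this function is nonzero by a sign argument: heuristically, without the $-\alpha L$ compensation the drift pushes mass in a definite direction, for instance toward $\frac{d}{dt}E[H_2]>0$. If the sign argument fails, I will instead use the whole family $v=H_n$, $n\ge2$, and compare the leading behaviour as $n\to\infty$. There $E_{\beta,\tau}[H_{n+\alpha}]/E_{\beta,\tau}[H_n]\sim n^{\alpha}\,\Gamma(\tau+n)/\Gamma(\tau+n+\alpha)$ scales differently from the $H_{1+\alpha}H_n$ term, so the vanishing for all $n$ will force the missing $L$-term, which is possible only when $\beta=\alpha$.
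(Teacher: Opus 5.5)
Your skeleton is the paper's. You test invariance against $H_2$ and $H_3$, use $L=0$ a.s.\ under ${\rm PD}(\beta,\tau)$ for $\beta\neq\alpha$, and get $\tau=\theta$ from $H_2$ when $\beta=\alpha$. Your justification of $L=0$ via the a.s.\ limit of $ix_i^{\beta}$ is more careful than the paper's one-line assertion. The $\beta=\alpha$ step is correct. The nonzero constant is easiest to see by writing ${\cal G}H_2$ as the generator with $\theta$ replaced by $\tau$ (for which ${\rm PD}(\alpha,\tau)$ is invariant) plus $(\theta-\tau)(H_{1+\alpha}H_2-H_{2+\alpha})$. The ${\rm PD}(\alpha,\tau)$-mean of that extra term is $-\alpha(\tau+\alpha)(\theta-\tau)\Gamma(\tau+1)/(\Gamma(\tau+\alpha+3)\Gamma(1-\alpha))$.

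The genuine gap is the case $\beta\neq\alpha$, which is where the substance of uniqueness lies. There you give only a plan, and the heuristic behind it is false. With $L=0$,
\[
E_{\beta,\tau}[{\cal G}H_2]=-(\tau+\beta)\bigl(\alpha(\theta+\alpha+1)-\tau-2\bigr)\frac{\Gamma(\tau+1)\Gamma(1+\alpha-\beta)}{\Gamma(\tau+\alpha+3)\Gamma(1-\beta)}.
\]
This vanishes at $\tau=\alpha(\theta+\alpha+1)-2$ whatever $\beta$ is, and that $\tau$ is admissible for every $\beta>2-\alpha(\theta+\alpha+1)$. So $\frac{d}{dt}E[H_2]$ has no definite sign. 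Solving the $H_2$ equation for $\theta$ gives $\theta=(\tau+2)/\alpha-\alpha-1$, which places no constraint on $\beta$ at all.

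Everything therefore rests on $H_3$. Substituting $\alpha\theta=\tau+2-\alpha-\alpha^2$ into $E_{\beta,\tau}[{\cal G}H_3]=0$ reduces it to
\[
-\tfrac32(1-\alpha)(\tau+\beta)(\tau+2\beta)\frac{\Gamma(\tau+1)\Gamma(1+\alpha-\beta)}{\Gamma(\tau+\alpha+4)\Gamma(1-\beta)}=0,
\]
that is, $\tau=-2\beta$; this is the paper's $\beta=1-\alpha(\theta+\alpha+1)/2$. So the function of $(\beta,\tau)$ you hope to show is nonzero does vanish, along the line $\tau=-2\beta$. No sign property of the drift excludes it. What excludes it is the admissibility condition $\tau>-\beta$, $\beta\ge 0$, which gives $\tau+2\beta=(\tau+\beta)+\beta>0$. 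This is the missing idea, and it is exactly how the paper concludes.

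Your fallback via $H_n$, $n\to\infty$, is also not carried out, and the ratio you single out is not decisive. Indeed $E_{\beta,\tau}[H_{n+\alpha}]/E_{\beta,\tau}[H_n]\to 1$, and all diagonal terms cancel to leading order. The fallback can be rescued, though:
\begin{itemize}
\item The diagonal terms of $E_{\beta,\tau}[{\cal G}H_n]$ expand as $n^{2-\beta-\tau}$ times a series in integer powers of $1/n$.
\item The off-diagonal part of $E_{\beta,\tau}[H_{1+\alpha}H_n]$ contributes a term of exact order $n^{1-\beta-\tau-\alpha}$, with coefficient a positive multiple of $\tau+\beta$.
\item Since $\alpha\notin\mathbb{Z}$, nothing cancels this term when $L=0$. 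For $\beta=\alpha$ it is cancelled precisely by the $-\alpha L$ term.
\end{itemize}
As proposed, however, neither route reaches a contradiction.
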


\noindent {\bf Proof.}\ \ By Theorem \ref{thm2.1}, ${\rm PD}(\alpha,\theta)$ is an invariant measure of ${\cal G}$. Now suppose that ${\rm PD}(\beta,\tau)$ is an invariant measure of ${\cal G}$  for some $\beta\in[0,1)$ and $\tau>-\beta$. We will show that $\beta=\alpha$ and $\tau=\theta$.

By (\ref{May9e}), we get
\begin{eqnarray}\label{May9g}
{\cal G}H_{2}
&=&\left[(\theta+\alpha+2)H_{1+\alpha}-\alpha L\right]H_{2}+H_{1+\alpha}-\left(\theta+\alpha+3\right)H_{2+\alpha},\nonumber\\
{\cal G}H_{3}
&=&\frac{3}{2}\bigg\{\left[(\theta+\alpha+3)H_{1+\alpha}-\alpha L\right]H_{3}+2H_{2+\alpha}-\left(\theta+\alpha+5\right)H_{3+\alpha}\bigg\}.
\end{eqnarray}
Then, by (\ref{Mar22a}), (\ref{Apr5a3}) and (\ref{May9g}), we obtain
\begin{eqnarray}\label{PPP1}
&&\int_{{\overline{\nabla}}_{\infty}}{\cal G}H_{2}\,d{\rm PD}(\alpha,\tau)\nonumber\\
&=&(\theta+\alpha+2)E_{\alpha,\tau}\left[H_{1+\alpha}H_2\right]-\alpha E_{\alpha,\tau}\left[LH_{2}\right]+E_{\alpha,\tau}\left[H_{1+\alpha}\right]-(\theta+\alpha+3)E_{\alpha,\tau}\left[H_{2+\alpha}\right]\nonumber\\
&=&(\theta+\alpha+2)\frac{\Gamma(\tau+1)[(1-\alpha)(\tau+\alpha)+2]}{\Gamma(\tau+\alpha+3)\Gamma(1-\alpha)}-\frac{\Gamma(\tau+1)(1-\alpha)(\tau+\alpha)}{\Gamma(\tau+\alpha+2)\Gamma(1-\alpha)}\nonumber\\
&&+\frac{\Gamma(\tau+1)}{\Gamma(\tau+\alpha+1)\Gamma(1-\alpha)}-(\theta+\alpha+3)\frac{\Gamma(\tau+1)}{\Gamma(\tau+\alpha+2)\Gamma(1-\alpha)}\nonumber\\
&=&\alpha(\tau - \theta)(\tau+\alpha  )\frac{\Gamma(\tau+1)}{\Gamma(\tau+\alpha+3)\Gamma(1-\alpha)}.
\end{eqnarray}
Assume that $\beta=\alpha$. Then, by  $\int_{{\overline{\nabla}}_{\infty}}{\cal G}H_{2}\,d{\rm PD}(\alpha,\tau)=0$ and (\ref{PPP1}), we get
$$  \tau=\theta.$$

From now on till the end of the proof, we assume that $\beta\not=\alpha$. Then, $L=0$ ${\rm PD}(\beta,\tau)$ a.s.. Thus, by (\ref{Mar22a}) and (\ref{May9g}), we get
\begin{eqnarray*}
&&\int_{{\overline{\nabla}}_{\infty}}{\cal G}H_{2}\,d{\rm PD}(\beta,\tau)\nonumber\\
&=&(\theta+\alpha+2)E_{\beta,\tau}\left[H_{1+\alpha}H_2\right]+E_{\beta,\tau}\left[H_{1+\alpha}\right]-(\theta+\alpha+3)E_{\beta,\tau}\left[H_{2+\alpha}\right]\nonumber\\
&=&(\theta+\alpha+2)\left[\frac{\Gamma(\tau+1)(1-\beta)(\tau+\beta)\Gamma(1+\alpha-\beta)}{\Gamma(\tau+\alpha+3)\Gamma(1-\beta)}+\frac{\Gamma(\tau+1)\Gamma(3+\alpha-\beta)}{\Gamma(\tau+\alpha+3)\Gamma(1-\beta)}\right]\nonumber\\
&&+\frac{\Gamma(\tau+1)\Gamma(1+\alpha-\beta)}{\Gamma(\tau+\alpha+1)\Gamma(1-\beta)}-(\theta+\alpha+3)\frac{\Gamma(\tau+1)\Gamma(2+\alpha-\beta)}{\Gamma(\tau+\alpha+2)\Gamma(1-\beta)}
\end{eqnarray*}
\begin{eqnarray}\label{PPP2}
\hskip -4cm&=&-(\tau+ \beta )(\alpha^2 + \theta \alpha+ \alpha - \tau - 2)\frac{\Gamma(\tau+1)\Gamma(1+\alpha-\beta)}{\Gamma(\tau+\alpha+3)\Gamma(1-\beta)}.
\end{eqnarray}
Hence,  by $\int_{{\overline{\nabla}}_{\infty}}{\cal G}H_{2}\,d{\rm PD}(\beta,\tau)=0$, we obtain
\begin{eqnarray}\label{May9h}
\tau=\alpha(\theta+\alpha+1) - 2.
\end{eqnarray}

By (\ref{Mar22a}) and (\ref{May9g}), we get
{\small\begin{eqnarray*}
&&\frac{2}{3}\int_{{\overline{\nabla}}_{\infty}}{\cal G}H_{3}\,d{\rm PD}(\beta,\tau)\\
&=&(\theta+\alpha+3)E_{\beta,\tau}\left[H_{1+\alpha}H_3\right]+2E_{\beta,\tau}\left[H_{2+\alpha}\right]-(\theta+\alpha+5)E_{\beta,\tau}\left[H_{3+\alpha}\right]\\
&=&(\theta+\alpha+3)\left[\frac{\Gamma(\tau+1)(1-\beta)(2-\beta)(\tau+\beta)\Gamma(1+\alpha-\beta)}{\Gamma(\tau+\alpha+4)\Gamma(1-\beta)}+\frac{\Gamma(\tau+1)\Gamma(4+\alpha-\beta)}{\Gamma(\tau+\alpha+4)\Gamma(1-\beta)}\right]\\
&&+\frac{2\Gamma(\tau+1)\Gamma(2+\alpha-\beta)}{\Gamma(\tau+\alpha+2)\Gamma(1-\beta)}-(\theta+\alpha+5)\frac{\Gamma(\tau+1)\Gamma(3+\alpha-\beta)}{\Gamma(\tau+\alpha+3)\Gamma(1-\beta)}\\
&=&-(\tau+ \beta )\left( \alpha^3 - 2\alpha^2\beta + \alpha^2\theta + 4\alpha^2 - 2\alpha\beta\theta - 4\alpha\beta - 2\alpha\tau + 3\alpha\theta + \alpha + 2\beta\tau + 6\beta - 2\tau - 6 \right)\\
&&\cdot\frac{\Gamma(\tau+1)\Gamma(1+\alpha-\beta)}{\Gamma(\tau+\alpha+4)\Gamma(1-\beta)}.
\end{eqnarray*}}

\noindent Then, by $\int_{{\overline{\nabla}}_{\infty}}{\cal G}H_{3}\,d{\rm PD}(\beta,\tau)=0$, we get
\begin{eqnarray*}
\alpha^3 - 2\alpha^2\beta + \alpha^2\theta + 4\alpha^2 - 2\alpha\beta\theta - 4\alpha\beta - 2\alpha\tau + 3\alpha\theta + \alpha + 2\beta\tau + 6\beta - 2\tau - 6=0,
\end{eqnarray*}
which together with (\ref{May9h}) implies that
\begin{eqnarray}\label{May9j}
\beta=1-\frac{\alpha(\theta+\alpha+1)}{2}.
\end{eqnarray}

Since $\beta>0$, by (\ref{May9j}), we get
$$
\alpha(\theta+\alpha+1)<2,
$$
which together with (\ref{May9h}) implies that
$$
\tau+\beta=\frac{\alpha(\theta+\alpha+1)}{2}-1<0,
$$
which contradicts the fact that $\tau+\beta>0$ . Therefore, the result holds.\hfill $\Box$

\section{The infinite dimensional labelled model}\setcounter{equation}{0}

In the unlabelled model of Section 2, the interest is on the evolution of ordered masses/proportions of different types in a population. The detailed information is lost for the type evolution. In this section, we construct the labelled model where both the masses and types evolve with time. The inclusion of type evolution makes the construction much more challenging. The model presented below is the first to possess an explicitly identified generator.

Let $(S,d)$ be a Polish space and $\nu_0$ be a diffuse probability measure on its Borel $\sigma$-algebra ${\cal B}(S)$. There exists a countable collection of continuous functions $\{g_i\}_{i\ge 1}$ on $S$ so that $\sup_i|g_i(s)|\le 1$ for all $s\in S$, and
$$
\varrho(\mu,\nu)=\sup_i\int_S g_i \,d(\mu-\nu),\ \ \ \ \mu,\nu\in {\cal P}_1(S),
$$
defines a metric on ${\cal P}_1(S)$ that is compatible with the topology of weak convergence. Then, $({\cal P}_1(S),\varrho)$ is a Polish space. Denote by $B_b(S)$ the set of all bounded measurable functions on $S$. Let $\nu$ be a finite measure on $S$ and $\phi\in B_b(S)$, define $\langle \phi,\nu\rangle=\int_S\phi d\nu$.  We consider the space of cylinder functions on ${\cal P}_1(S)$:
\begin{eqnarray}\label{May6Ha}
{\cal F}=\{F(\mu)=f(\langle \phi_1,\mu\rangle,\dots,\langle
\phi_n,\mu\rangle): \phi_i\in B_b(S),1\le i\le n,f\in
C^{\infty}(\mathbb{R}^n),n\in\mathbb{N}\}.
\end{eqnarray}
For $F\in {\cal F}$, write $\nabla F(\mu)$ for the function $s\mapsto \partial F(\mu)/\delta \mu(s)$.

Let $\alpha\in(0,1)$ and $\theta>-\alpha$. For $\mu\in {\cal P}_1(S)$, denote by $\mu_a$ and $\mu_{na}$ its atomic part and non-atomic part, respectively. Suppose that $\mu_a=\sum_{i=1}^{\infty}\rho_i\delta_{\xi_i}$ with $\rho=(\rho_1,\rho_2,\dots)\in \overline{{\nabla}}_{\infty}$ and $\xi_i\in S$, $i\ge1$. Define
$$
H_{1+\alpha}(\mu)=H_{1+\alpha}(\rho),\ \ L(\mu)=L(\rho),\
\mu^{1+\alpha}=\sum_{i=1}^{\infty}\rho_i^{1+\alpha}\delta_{\xi_i},
$$
and for $\phi,\psi\in B_b(S)$,
\begin{eqnarray}\label{May6b}
[\phi,\psi]_{\mu}&=&\langle \phi\psi,\mu^{1+\alpha}\rangle+H_{1+\alpha}(\mu)\langle \phi,\mu_a\rangle\langle \psi,\mu_a\rangle-\langle \phi,\mu^{1+\alpha}\rangle\langle \psi,\mu_a\rangle-\langle \phi,\mu_a\rangle\langle \psi,\mu^{1+\alpha}\rangle\nonumber\\
&&+\langle \phi,\psi\rangle_{\mu_{na}},
\end{eqnarray}
where $\langle \phi
,\psi\rangle_{\mu_{na}}=\int_S\phi\psi\, d\mu_{na}-\int_S\phi\, d\mu_{na}\int_S\psi\, d\mu_{na}$.

For $F(\mu)=f(\langle \phi_1,\mu\rangle,\dots,\langle
\phi_n,\mu\rangle)\in{\cal F}$, define
\begin{eqnarray}\label{Apr28d}
{\cal L}F(\mu)&=&
\frac{1}{2}\sum\limits_{i,j=1}^n\partial_i\partial_jf(\langle \phi_1,\mu\rangle,\dots,\langle
\phi_n,\mu\rangle)[\phi_i,\phi_j]_{\mu}-\frac{1}{2}\sum\limits_{i=1}^n\partial_if(\langle \phi_1,\mu\rangle,\dots,\langle
\phi_n,\mu\rangle)\nonumber\\
&&\ \ \ \ \ \cdot\bigg\{(\theta+\alpha+1)\left[\langle\phi_i,\mu^{1+\alpha}\rangle-\langle\phi_i,H_{1+\alpha}(\mu)\mu\rangle\right]+\alpha L(\mu)\langle\phi_i-\nu_0(\phi_i),\mu\rangle\bigg\}.\ \ \ \
\end{eqnarray}
Consider the following  symmetric form on $L^2({\cal P}_1(S);\Pi_{\alpha,\theta,\nu_0})$:
\begin{eqnarray*}
{\cal E}(F,G)=\frac{1}{2}\int_{{\cal P}_1(S)}[ \nabla
F(\mu),\nabla G(\mu)]_{\mu}
\Pi_{\alpha,\theta,\nu_0}(d\mu),\ \ \ \ F,G\in
{\cal F}.
\end{eqnarray*}

We now present the main result of this section. We refer the reader to  \cite{MR} for the concepts and terminology of quasi-regular Dirichlet forms. We also refer the reader to Schmuland and his collaborators' works \cite{ORS, RS, S} for the beautiful method of using Dirichlet forms and quasi-continuous functions to investigate the path behaviour of Markov processes, in particular, the Fleming-Viot type diffusions.

\begin{thm}\label{thm3.1} {\rm (i)} For any $F,G\in{\cal F}$,
\begin{eqnarray}\label{May5E1}
{\cal E}(F,G)=-\int_{{\cal P}_1(S)}({\cal L}F) G\,d\Pi_{\alpha,\theta,\nu_0}.
\end{eqnarray}

\noindent {\rm (ii)}  The bilinear form $({\cal E}, {\cal F})$ is closable on $L^2({\cal P}_1(S);\Pi_{\alpha,\theta,\nu_0})$ and its closure  $({\cal E}, D({\cal E}))$ is a quasi-regular Dirichlet form.

\noindent {\rm (iii)}  There exists a time-reversible, conservative, diffusion process $(\{X_t\}_{t\ge 0},\{P_\mu\}_{\mu\in {\cal P}_1(S)})$ which is properly associated with $({\cal E}, D({\cal E}))$ and its stationary distribution is  $\Pi_{\alpha,\theta,\nu_0}$.

\noindent {\rm (iv)}  Let $A\in {\cal B}(S)$. If $\nu_0(A)=0$, then there exists an ${\cal E}$-exceptional set $N_A\subset {\cal P}_1(S)$ such that for any $\mu\notin N_A$,
\begin{eqnarray*}
P_{\mu}\left\{X_t(A)=0\ {\rm for\ all}\ t\in[0,\infty)\right\}=1.
\end{eqnarray*}
Moreover, there exists an ${\cal E}$-exceptional set $N\subset {\cal P}_1(S)$ such that for any $\mu\notin N$,
\begin{eqnarray*}
P_{\mu}\left\{X_t\ {\rm is\ purely\ atomic\ for\ all}\ t\in[0,\infty)\right\}=1,
\end{eqnarray*}
and
\begin{eqnarray*}
P_{\mu}\left\{t\mapsto X_t\ {\rm is\ continuous\ in\ variation\ norm}\right\}=1.
\end{eqnarray*}

\end{thm}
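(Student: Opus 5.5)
The plan follows the proof of Theorem \ref{thm2.1}, with the unlabelled moment formulas replaced by their labelled versions for the Pitman-Yor process.

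\textbf{Step 1: reduction to monomials.} For part (i), both sides of (\ref{May5E1}) are bilinear, and ${\cal L}$ satisfies the product rule
\[
{\cal L}(FG)=({\cal L}F)G+F({\cal L}G)+[\nabla F,\nabla G]_\mu .
\]
This identity is checked directly from (\ref{Apr28d}), exactly as (\ref{q2}) was. It therefore suffices to establish two facts:
\begin{itemize}
\item $\int {\cal L}F\,d\Pi_{\alpha,\theta,\nu_0}=0$;
\item $\int (G{\cal L}F-F{\cal L}G)\,d\Pi_{\alpha,\theta,\nu_0}=0$.
\end{itemize}
Both need only be checked for $F,G$ in a core of monomials
\[
F(\mu)=\prod_{r=1}^l\langle \phi_r,\mu\rangle, \qquad G(\mu)=\prod_{p=1}^k\langle\psi_p,\mu\rangle .
\]
Polynomials are dense in the relevant sense: $\Pi_{\alpha,\theta,\nu_0}$-a.s.\ $\mu$ is purely atomic, and $f\in C^\infty$ can be approximated by polynomials on compact boxes. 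Because $\mu=\mu_a$ a.s., the $\mu_{na}$ term in $[\cdot,\cdot]_\mu$ vanishes under the integral.

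\textbf{Step 2: labelled moment formulas.} Write $\mu=\sum_i\rho_i\delta_{\xi_i}$ with $\xi_i$ i.i.d.\ $\nu_0$, independent of $\rho\sim{\rm PD}(\alpha,\theta)$. Expanding a monomial gives sums over index tuples. Grouping coincident indices produces sums over set partitions $c\in\sigma(l,w)$, with terms of the form
\[
\sum_{i_1,\dots,i_w\ \rm distinct}\prod_j \rho_{i_j}^{\kappa_j}\prod_j\Big(\prod_{r\in c_j}\phi_r\Big)(\xi_{i_j}).
\]
Taking expectation over the labels factorizes this as
\[
\prod_j\nu_0\Big(\prod_{r\in c_j}\phi_r\Big)\times E_{\alpha,\theta}\Big[\sum_{\rm distinct}\prod\rho^{\kappa_j}\Big].
\]
The second factor is given by Proposition \ref{proMay14a}, and by Proposition \ref{pro2} when the factor $L$ is present. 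The exponents $\kappa_j$ are either integers or integers plus $\alpha$, arising from $\mu^{1+\alpha}$ and $H_{1+\alpha}$.

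\textbf{Step 3: matching coefficients.} The term $\alpha L\langle\phi-\nu_0(\phi),\mu\rangle$ splits into two pieces. The piece $\alpha L\langle\phi,\mu\rangle$ behaves like the unlabelled $\alpha L$ drift. The piece $-\alpha L\nu_0(\phi)$ has no unlabelled analogue: it lowers the number of blocks carrying a $\phi$-factor and effectively creates a new block. In the moment formula this is matched by the factor $(\theta+w\alpha)$, since $\prod_{j=1}^{w}(\theta+j\alpha)$ appears in Proposition \ref{pro2}.

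Collecting, for each partition $c$, the coefficient of $\prod_j\nu_0(\prod_{r\in c_j}\phi_r)$, I expect identities analogous to $\Xi(c)=0$ and $\Lambda(b,d)=\Upsilon(b,d)$. The difference is that blocks now carry labelled products rather than just the integers $N_j$. This bookkeeping, in particular verifying that the $\nu_0(\phi_i)$ term together with the $\theta+w\alpha$ factors cancels the cross terms, is the main obstacle. A safer route is to first check the cases $l=1,2$ to confirm the structure, then induct.

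\textbf{Step 4: parts (ii)--(iv).} Given (i), ${\cal E}$ is closable and Markovian. Nonnegativity of $[\phi,\phi]_\mu$ follows by the Cauchy--Schwarz computation (\ref{May6Y1}), plus nonnegativity of the variance term. Quasi-regularity follows the Schmuland/Overbeck--R\"ockner--Schmied approach for Fleming--Viot forms, using the functions $\mu\mapsto\langle g_i,\mu\rangle$ and the metric $\varrho$:
\begin{itemize}
\item show $\varrho(\cdot,\nu)$ lies in $D({\cal E})$ with bounded energy measure, since $[g,g]_\mu\le C$;
\item build an ${\cal E}$-nest of compacts from tightness of $\nu_0$.
\end{itemize}

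Part (iii) follows from \cite{MR}. Conservativeness holds because ${\cal L}1=0$, and the local property holds because ${\cal L}$ is a second-order operator without jump part.

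For part (iv):
\begin{itemize}
\item \emph{Null sets stay null.} If $\nu_0(A)=0$, then $\mu(A)=0$ a.s. The function $u(\mu)=\mu(A)$ is quasi-continuous with ${\cal E}(u,u)=0$, so a standard argument shows the set $\{\mu(A)=0\}$ is ${\cal E}$-invariant.
\item \emph{Pure atomicity.} Apply the same argument to $1-\sum_i\rho_i$, following \cite{S1} as in Theorem \ref{thm2.1}(iii).
\item \emph{Continuity in variation.} Show $\|\mu-\nu\|_{\rm var}$ restricted to purely atomic measures is an ${\cal E}$-quasi-continuous function with bounded energy, following Schmuland's argument.
\end{itemize}
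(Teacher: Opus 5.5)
\paragraph{Overall.} Your architecture for (i) is the paper's: the product rule for ${\cal L}$, reduction to symmetry of ${\cal L}$ on monomials, and expansion over set partitions using Propositions \ref{proMay14a} and \ref{pro2}. Parts (ii)--(iv) are also handled the same way in the paper: closability from (i), the R\"ockner--Schmuland quasi-regularity argument, Ma--R\"ockner, and Overbeck--R\"ockner--Schmuland for path properties.

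\paragraph{The gap.} The one step that carries the mathematical content is not done. You only \emph{expect} a coefficient identity analogous to $\Xi(c)=0$ and $\Lambda(b,d)=\Upsilon(b,d)$, and you flag it as the main obstacle. Everything in (ii)--(iv) rests on the integration-by-parts formula. That formula depends on the specific drift, in particular on its $\nu_0(\phi)$ part, so it cannot be taken by analogy with Section 2. Your fallback, checking $l=1,2$ and then inducting, is not a workable plan as stated. ${\cal L}$ of a monomial produces $L$, $H_{1+\alpha}$ and $\mu^{1+\alpha}$ factors, so there is no evident reduction to lower degree. The paper does not induct; it computes for all degrees at once.

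\paragraph{How the computation closes.} First pad $F$ and $G$ with factors $\langle 1,\mu\rangle$, so that both are products of the same number $n$ of linear functionals. Then in (\ref{May9k}) the terms $\frac{n}{2}[(\theta+\alpha+n)H_{1+\alpha}-\alpha L]F$ contribute identically to $\int G\,{\cal L}F$ and $\int F\,{\cal L}G$, and they drop out. Invariance $\int{\cal L}F\,d\Pi_{\alpha,\theta,\nu_0}=0$ then comes free from symmetry with $G=\langle 1,\mu\rangle^n$, so it is not a separate fact.

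Only three kinds of terms remain: the merging terms $\langle\psi_i\psi_j,\mu^{1+\alpha}\rangle$, the drift terms $\langle\psi_i,\mu^{1+\alpha}\rangle$, and $\frac{\alpha}{2}L\,\nu_0(\psi_i)$. Expanding over pairs $(b,d)$ of partitions gives the coefficient $h(b,d)$ of (\ref{May23p}).

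The $L\,\nu_0(\psi_i)$ term contributes only on partitions in which some block consists of a single $\psi$-index and no $\phi$-index. Proposition \ref{pro2} over $k-1$ blocks carries the same prefactor $\prod_{j=1}^{k-1}(\theta+j\alpha)$ as Proposition \ref{proMay14a} over $k$ blocks; this is the correct form of your $(\theta+w\alpha)$ heuristic. Hence its coefficient is $C/\Gamma(1-\alpha)$, with $C=\Gamma(\theta+2n)/(2\Gamma(\theta+2n-1+\alpha))$. This exactly cancels the contribution $-C\,\Gamma(1)/\Gamma(1-\alpha)$ of the drift term on those same singleton blocks. What remains is
\[
h(b,d)=-C\sum_{i:\,|b_i|\ge1,\,|d_i|\ge1}\frac{|b_i|\,|d_i|\,\Gamma(|b_i|+|d_i|-1)}{\Gamma(|b_i|+|d_i|-\alpha)},
\]
which is symmetric in $(b,d)$, so $H(\phi,\psi)=H(\psi,\phi)$. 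This singleton cancellation is the missing idea. It is also where the $\nu_0(\phi)$ part of the drift is indispensable: without it the uncancelled singleton counts break symmetry as soon as $n\ge2$.

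\paragraph{Smaller points.} For quasi-regularity, the paper uses $f_{ij}(\mu)=\int g_i\,d(\mu-\mu_j)$. These have energy density at most $4$ and satisfy $\sup_i f_{ij}=\varrho(\cdot,\mu_j)$, and \cite[Theorem 3.4]{RS} then gives the nest of compacts; tightness of $\nu_0$ plays no role.

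In (iv), ${\cal E}(u,u)=0$ for $u(\mu)=\mu(A)$ is trivially true because $u=0$ a.e., so it carries no information. What is actually needed is that $\{\mu:\mu(A)>0\}$ is ${\cal E}$-exceptional. One route is a capacity estimate: cover $A$ by open sets $O$ with $\nu_0(O)$ small, and use $[\phi,\phi]_\mu\le 4\langle\phi^2,\mu\rangle$. The paper takes this from \cite{ORS}.
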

\vskip 0.3cm
\noindent {\bf Proof.}\ \ (i) Let
$$
F_{\phi_1,\dots,\phi_n}(\mu)=\langle \phi_1,\mu\rangle\cdots\langle
\phi_n,\mu\rangle,
$$
and
$$
G_{\phi_1,\dots,\phi_n}(\mu)=\langle \psi_1,\mu\rangle\cdots\langle
\psi_n,\mu\rangle,
$$
where $\phi_i,\psi_i\in B_b(S)$, $1\le i\le n$, $n\in\mathbb{N}$. By (\ref{Apr28d}), we find that
\begin{eqnarray}\label{Apr28e}
{\cal L}(FG)(\mu)={\cal L}F(\mu)G(\mu)+F(\mu){\cal L}G(\mu)+[ \nabla
F(\mu),\nabla G(\mu)]_{\mu}.
\end{eqnarray}
Suppose we can show that ${\cal L}$ is symmetric with respect to $\Pi_{\alpha,\theta,\nu_0}$, i.e., for any $F,G$ of the above form
\begin{eqnarray}\label{Apr28f}
\int_{{\cal P}_1(S)}{\cal L}F(\mu)G(\mu)
\Pi_{\alpha,\theta,\nu_0}(d\mu)=\int_{{\cal P}_1(S)}F(\mu){\cal L}G(\mu)
\Pi_{\alpha,\theta,\nu_0}(d\mu).
\end{eqnarray}
By (\ref{Apr28d}), we get $ {{\cal L}1=0}$. Since $\langle1,\mu\rangle$ can be added if necessary, letting $G=1$ in (\ref{Apr28f}), we obtain
\begin{eqnarray*}
\int_{{\cal P}_1(S)}{\cal L}F(\mu)
\Pi_{\alpha,\theta,\nu_0}(d\mu)=0.
\end{eqnarray*}
Then, by taking integration on both sides of (\ref{Apr28e}), we get
\begin{eqnarray*}
0&=&\int_{{\cal P}_1(S)}{\cal L}F(\mu)G(\mu)\Pi_{\alpha,\theta,\nu_0}(d\mu)+\int_{{\cal P}_1(S)}F(\mu){\cal L}G(\mu)\Pi_{\alpha,\theta,\nu_0}(d\mu)\\
&&+\int_{{\cal P}_1(S)}[ \nabla
F(\mu),\nabla G(\mu)]_{\mu}\Pi_{\alpha,\theta,\nu_0}(d\mu),
\end{eqnarray*}
which together with (\ref{Apr28f}) implies that
$$
\frac{1}{2}\int_{{\cal P}_1(S)}[ \nabla
F(\mu),\nabla G(\mu)]_{\mu}
\Pi_{\alpha,\theta,\nu_0}(d\mu)=-\int_{{\cal P}_1(S)}{\cal L}F(\mu)G(\mu)\Pi_{\alpha,\theta,\nu_0}(d\mu).
$$
Further, by virtue of polynomial approximation, we deduce that (\ref{May5E1}) holds for any $F,G\in{\cal F}$.

In the sequel, we will show that (\ref{Apr28f}) holds.  Suppose $A$ and $B$ are two disjoint Borel subsets of $S$. Let $F(\mu)=\mu(A)$ and $G(\mu)=\mu(B)$. Then, we have
\begin{eqnarray*}
&&\int_{{\cal P}_1(S)}{\cal L}F(\mu)G(\mu)
\Pi_{\alpha,\theta,\nu_0}(d\mu)\\
&=&-\frac{\nu_0(A)\nu_0(B)}{2}\bigg\{(\theta+\alpha+1)\int_{{\overline{\nabla}}_{\infty}}
\left[\sum_{i\not=j}\rho_i^{1+\alpha}\rho_j-\sum_{k=1}^{\infty}\rho_k^{1+\alpha}\sum_{i\not=j}\rho_i\rho_j\right]{\rm PD}(\alpha,\theta)(d\rho)\\
&&\ \ \ \ +\alpha\int_{{\overline{\nabla}}_{\infty}}
L(\rho)\left[\sum_{i\not=j}\rho_i\rho_j-1\right]{\rm PD}(\alpha,\theta)(d\rho)\bigg\}.
\end{eqnarray*}
Similarly, we can get the expression of $\int_{{\cal P}_1(S)}F(\mu){\cal L}G(\mu)
\Pi_{\alpha,\theta,\nu_0}(d\mu)$. Thus, (\ref{Apr28f}) holds when $n=1$.

Now we assume that $n\ge 2$. Write
\begin{eqnarray*}
&&F^i_{\phi_1,\dots,\phi_n}(\mu)=\prod_{l\not=i}\langle \phi_l,\mu\rangle,\\
&&F^{ij}_{\phi_1,\dots,\phi_n}(\mu)=\prod_{l\not=i,j}\langle \phi_l,\mu\rangle,\\
&&F^{i,\alpha}_{\phi_1,\dots,\phi_n}(\mu)=\langle \phi_i,\mu^{1+\alpha}\rangle\prod_{l\not=i}\langle \phi_l,\mu\rangle,\\
&&F^{j\rightarrow i}_{\phi_1,\dots,\phi_n}(\mu)=\langle \phi_i\phi_j,\mu^{1+\alpha}\rangle F^{ij}_{\phi_1,\dots,\phi_n}(\mu).
\end{eqnarray*}
Then,
\begin{eqnarray}\label{May9k}
&&{\cal L}F_{\phi_1,\dots,\phi_n}(\mu)\nonumber\\
&=&\sum_{1\le i<j\le n}F^{j\rightarrow i}_{\phi_1,\dots,\phi_n}(\mu)+\frac{n[(\theta+\alpha+n)H_{1+\alpha}(\mu)-\alpha  L(\mu)]}{2}F_{\phi_1,\dots,\phi_n}(\mu)\nonumber\\
&&-\left(\frac{\theta+\alpha+1}{2}+n-1\right)\sum_{i=1}^nF^{i,\alpha}_{\phi_1,\dots,\phi_n}(\mu)+\frac{\alpha}{2}L(\mu)\sum_{i=1}^n\nu_0(\phi_i)F^{i}_{\phi_1,\dots,\phi_n}(\mu).
\end{eqnarray}
Denote
\begin{eqnarray*}
H(\phi,\psi)&=&\sum_{1\le i<j\le n}\int_{{\cal P}_1(S)}\prod_{p=1}^n\langle \phi_p,\mu\rangle\langle \psi_i\psi_j,\mu^{1+\alpha}\rangle \prod_{l\not=i,j}\langle \psi_l,\mu\rangle\Pi_{\alpha,\theta,\nu_0}(d\mu)\\
&&-\left(\frac{\theta+\alpha+1}{2}+n-1\right)\sum_{i=1}^n\int_{{\cal P}_1(S)}\prod_{p=1}^n\langle \phi_p,\mu\rangle\langle \psi_i,\mu^{1+\alpha}\rangle\prod_{l\not=i}\langle \psi_l,\mu\rangle\Pi_{\alpha,\theta,\nu_0}(d\mu)\\
&&+\frac{\alpha}{2}\sum_{i=1}^n\nu_0(\psi_i)\int_{{\cal P}_1(S)}L(\mu)\prod_{p=1}^n\langle \phi_p,\mu\rangle\prod_{l\not=i}\langle \psi_l,\mu\rangle\Pi_{\alpha,\theta,\nu_0}(d\mu).
\end{eqnarray*}
Then, (\ref{Apr28f}) is equivalent to $H(\phi,\psi)=H(\psi,\phi)$.

Denote by $\sigma(2n,k)$  the collection of partitions $c$ of $\{1,\dots,2n\}$ into $k$ non-empty sets $c_1,\dots,c_k$ such that $\min\, c_1<\cdots<\min\, c_k$ and the number of elements in $c_i$ is denoted by $|c_i|$. Let $\zeta(n,k)$ denote the set of partitions of $\{1,\dots,n\}$ into $k$ sets. Then, for every $1\le k\le 2n$ and $c$ in $\sigma(2n,k)$ there is a unique way of finding $b$ and $d$ in $\zeta(n,k)$ such that
$$
b_i\bigcup d_i\not=\emptyset,\ \ c_i=b_i\bigcup\{n+l:l\in d_i\},\ i=1,\dots,k.
$$
By (\ref{Mar22a}), similar to \cite[Lemma 5.2.1]{F}, we can show that
\begin{eqnarray*}
&&\int_{{\cal P}_1(S)}F_{\phi_1,\dots,\phi_n}(\mu)G_{\psi_1,\dots,\psi_n}(\mu)
\Pi_{\alpha,\theta,\nu_0}(d\mu)\\
&=&\sum_{k=1}^{2n}\sum_{b,d\in\zeta(n,k)}\frac{\prod_{j=1}^{k-1}(\theta+j\alpha)}{\prod_{j=1}^{2n-1}(\theta+j)}\prod_{j=1}^k\frac{\Gamma(|b_j|+|d_j|-\alpha)}{\Gamma(1-\alpha)}\prod_{r=1}^k\nu_0\left(\prod_{j\in b_r}\phi_j\prod_{j\in d_r}\psi_j\right).
\end{eqnarray*}

Write
\begin{eqnarray*}
H(\phi,\psi)=\sum_{k=1}^{2n}\sum_{b,d\in\zeta(n,k)}h(b,d)\frac{\prod_{j=1}^{k-1}(\theta+j\alpha)}{\prod_{j=1}^{2n-1}(\theta+j)}\prod_{j=1}^k\frac{\Gamma(|b_j|+|d_j|-\alpha)}{\Gamma(1-\alpha)}\prod_{i=1}^k\nu_0\left(\prod_{j\in b_i}\phi_j\prod_{j\in d_i}\psi_j\right),
\end{eqnarray*}
and
\begin{eqnarray*}
H(\psi,\phi)=\sum_{k=1}^{2n}\sum_{b,d\in\zeta(n,k)}h(b,d)\frac{\prod_{j=1}^{k-1}(\theta+j\alpha)}{\prod_{j=1}^{2n-1}(\theta+j)}\prod_{j=1}^k\frac{\Gamma(|b_j|+|d_j|-\alpha)}{\Gamma(1-\alpha)}\prod_{i=1}^k\nu_0\left(\prod_{j\in b_i}\psi_j\prod_{j\in d_i}\phi_j\right).
\end{eqnarray*}
The coefficient  $h(b,d)$ can be calculated as follows. Noting that
\begin{eqnarray*}
&&\sum_{1\le i<j\le n}\int_{{\cal P}_1(S)}\prod_{p=1}^n\langle \phi_p,\mu\rangle\langle \psi_i\psi_j,\mu^{1+\alpha}\rangle \prod_{l\not=i,j}\langle \psi_l,\mu\rangle\Pi_{\alpha,\theta,\nu_0}(d\mu)\\
&=&\sum_{k=1}^{2n}\sum_{b,d\in\zeta(n,k)}\sum_{i:|d_i|\ge2}{|d_i|\choose 2}\frac{\prod_{j=1}^{k-1}(\theta+j\alpha)}{\frac{\Gamma(\theta+2n-1+\alpha)}{\Gamma(\theta+1)}}\prod_{r=1}^k\frac{\Gamma(|b_r|+|d_r|-\alpha)}{\Gamma(1-\alpha)}\frac{\Gamma(|b_i|+|d_i|-1)}{\Gamma(|b_i|+|d_i|-\alpha)}\\
&&\cdot \prod_{r=1}^k\nu_0\left(\prod_{j\in b_r}\phi_j\prod_{j\in d_r}\psi_j\right),
\end{eqnarray*}
it follows that  the total contributions from the term
$$
\sum_{1\le i<j\le n}\int_{{\cal P}_1(S)}\prod_{p=1}^n\langle \phi_p,\mu\rangle\langle \psi_i\psi_j,\mu^{1+\alpha}\rangle \prod_{l\not=i,j}\langle \psi_l,\mu\rangle\Pi_{\alpha,\theta,\nu_0}(d\mu)
$$
are
$$
\frac{\Gamma(\theta+2n)}{2\Gamma(\theta+2n-1+\alpha)}\sum_{i:|d_i|\ge2}\frac{|d_i|(|d_i|-1)\Gamma(|b_i|+|d_i|-1)}{\Gamma(|b_i|+|d_i|-\alpha)}.
$$

Similarly, we have
\begin{eqnarray*}
&&\sum_{i=1}^n\int_{{\cal P}_1(S)}\prod_{p=1}^n\langle \phi_p,\mu\rangle\langle \psi_i,\mu^{1+\alpha}\rangle\prod_{l\not=i}\langle \psi_l,\mu\rangle\Pi_{\alpha,\theta,\nu_0}(d\mu)\\
&=&\sum_{k=1}^{2n}\sum_{b,d\in\zeta(n,k)}\sum_{i:|d_i|\ge1}|d_i|\frac{\prod_{j=1}^{k-1}(\theta+j\alpha)}{\frac{\Gamma(\theta+2n+\alpha)}{\Gamma(\theta+1)}}\prod_{r=1}^k\frac{\Gamma(|b_r|+|d_r|-\alpha)}{\Gamma(1-\alpha)}\frac{\Gamma(|b_i|+|d_i|)}{\Gamma(|b_i|+|d_i|-\alpha)}\\\
&&\cdot \prod_{r=1}^k\nu_0\left(\prod_{j\in b_r}\phi_j\prod_{j\in d_r}\psi_j\right).
\end{eqnarray*}
Then, the total contributions from the term
$$
-\left(\frac{\theta+\alpha+1}{2}+n-1\right)\sum_{i=1}^n\int_{{\cal P}_1(S)}\prod_{p=1}^n\langle \phi_p,\mu\rangle\langle \psi_i,\mu^{1+\alpha}\rangle\prod_{l\not=i}\langle \psi_l,\mu\rangle\Pi_{\alpha,\theta,\nu_0}(d\mu)
$$
are
\begin{eqnarray*}
&&-\frac{\Gamma(\theta+2n)}{2\Gamma(\theta+2n-1+\alpha)}\sum_{i:|d_i|\ge1}\frac{|d_i|\Gamma(|b_i|+|d_i|)}{\Gamma(|b_i|+|d_i|-\alpha)}.
\end{eqnarray*}

Finally, writing
\begin{eqnarray*}
&&\sum_{i=1}^n\nu_0(\psi_i)\int_{{\cal P}_1(S)}L(\mu)\prod_{p=1}^n\langle \phi_p,\mu\rangle\prod_{l\not=i}\langle \psi_l,\mu\rangle\Pi_{\alpha,\theta,\nu_0}(d\mu)\\
&=&\sum_{k=1}^{2n}\sum_{b,d\in\zeta(n,k)}\sum_{i:|d_i|=1,|b_i|=0}\int_{{\overline{\nabla}}_{\infty}}
L(\rho)\prod_{n_1,\dots,n_{i-1},n_{i+1},\dots,n_k\ {\rm distinct}}\rho^{|b_l|+|d_l|}_{n_l}{\rm PD}(\alpha,\theta)(d\rho)\\
&&\cdot\prod_{r=1}^k\nu_0\left(\prod_{j\in b_r}\phi_j\prod_{j\in d_r}\psi_j\right),
\end{eqnarray*}
it follows that the total contributions from the term
\begin{eqnarray*}
\frac{\alpha}{2}\sum_{i=1}^n\nu_0(\psi_i)\int_{{\cal P}_1(S)}L(\mu)\prod_{p=1}^n\langle \phi_p,\mu\rangle\prod_{l\not=i}\langle \psi_l,\mu\rangle\Pi_{\alpha,\theta,\nu_0}(d\mu)
\end{eqnarray*}
are
\begin{eqnarray*}
&&\frac{\alpha}{2}\sum_{i:|d_i|=1,|b_i|=0}\frac{\int_{{\overline{\nabla}}_{\infty}}
L(\rho)\prod_{n_1,\dots,n_{i-1},n_{i+1},\dots,n_k\ {\rm distinct}}\rho^{|b_l|+|d_l|}_{n_l}{\rm PD}(\alpha,\theta)(d\rho)}{\frac{\prod_{j=1}^{k-1}(\theta+j\alpha)}{\prod_{j=1}^{2n-1}(\theta+j)}\prod_{j=1}^k\frac{\Gamma(|b_j|+|d_j|-\alpha)}{\Gamma(1-\alpha)}}.
\end{eqnarray*}
By (\ref{Apr5a3}), we get
{\small\begin{eqnarray*}
&&\int_{{\overline{\nabla}}_{\infty}}
L(\rho)\prod_{n_1,\dots,n_{i-1},n_{i+1},\dots,n_k\ {\rm distinct}}\rho^{|b_l|+|d_l|}_{n_l}{\rm PD}(\alpha,\theta)(d\rho)\\
&=&\frac{\Gamma(\theta+1)\prod_{j=1}^{k-1}(\theta+j\alpha)}{\alpha\Gamma(1-\alpha)\Gamma(\theta+2n-1+\alpha)}\prod_{j=1}^k\frac{\Gamma(|b_j|+|d_j|-\alpha)}{\Gamma(1-\alpha)}.
\end{eqnarray*}}

\noindent Thus, the total contributions from the term
\begin{eqnarray*}
\frac{\alpha}{2}\sum_{i=1}^n\nu_0(\psi_i)\int_{{\cal P}_1(S)}L(\mu)\prod_{p=1}^n\langle \phi_p,\mu\rangle\prod_{l\not=i}\langle \psi_l,\mu\rangle\Pi_{\alpha,\theta,\nu_0}(d\mu)
\end{eqnarray*}
are
$$
\frac{\Gamma(\theta+2n)}{2\Gamma(1-\alpha)\Gamma(\theta+2n-1+\alpha)}\sum_{i:|d_i|=1,|b_i|=0}1.
$$

Putting all things together, we obtain
\begin{eqnarray}\label{May23p}
h(b,d)&=&\frac{\Gamma(\theta+2n)}{2\Gamma(\theta+2n-1+\alpha)}\sum_{i:|d_i|\ge2}\frac{|d_i|(|d_i|-1)\Gamma(|b_i|+|d_i|-1)}{\Gamma(|b_i|+|d_i|-\alpha)}\nonumber\\
&&-\frac{\Gamma(\theta+2n)}{2\Gamma(\theta+2n-1+\alpha)}\sum_{i:|d_i|\ge1}\frac{|d_i|\Gamma(|b_i|+|d_i|)}{\Gamma(|b_i|+|d_i|-\alpha)}\nonumber\\
&&+\frac{\Gamma(\theta+2n)}{2\Gamma(1-\alpha)\Gamma(\theta+2n-1+\alpha)}\sum_{i:|d_i|=1,|b_i|=0}1\\
&=&-\frac{\Gamma(\theta+2n)}{2\Gamma(\theta+2n-1+\alpha)}\sum_{i:|d_i|\ge2}\frac{|d_i||b_i|\Gamma(|b_i|+|d_i|-1)}{\Gamma(|b_i|+|d_i|-\alpha)}\nonumber\\
&&-\frac{\Gamma(\theta+2n)}{2\Gamma(\theta+2n-1+\alpha)}\sum_{i:|d_i|=1}\frac{\Gamma(|b_i|+|d_i|)}{\Gamma(|b_i|+|d_i|-\alpha)}\nonumber\\
&&+\frac{\Gamma(\theta+2n)}{2\Gamma(1-\alpha)\Gamma(\theta+2n-1+\alpha)}\sum_{i:|d_i|=1,|b_i|=0}1\nonumber.
\end{eqnarray}

Then,  for any $(b,d)$,
\begin{eqnarray*}
h(b,d)-h(d,b)&=&-\frac{\Gamma(\theta+2n)}{2\Gamma(\theta+2n-1+\alpha)}\sum_{i:|d_i|\ge2,|b_i|=1}\frac{|d_i|\Gamma(|b_i|+|d_i|-1)}{\Gamma(|b_i|+|d_i|-\alpha)}\nonumber\\
&&+\frac{\Gamma(\theta+2n)}{2\Gamma(\theta+2n-1+\alpha)}\sum_{i:|b_i|\ge2,|d_i|=1}\frac{|b_i|\Gamma(|b_i|+|d_i|-1)}{\Gamma(|b_i|+|d_i|-\alpha)}\nonumber\\
&&-\frac{\Gamma(\theta+2n)}{2\Gamma(\theta+2n-1+\alpha)}\sum_{i:|d_i|=1}\frac{\Gamma(|b_i|+|d_i|)}{\Gamma(|b_i|+|d_i|-\alpha)}\nonumber\\
&&+\frac{\Gamma(\theta+2n)}{2\Gamma(\theta+2n-1+\alpha)}\sum_{i:|b_i|=1}\frac{\Gamma(|b_i|+|d_i|)}{\Gamma(|b_i|+|d_i|-\alpha)}\nonumber\\
&&+\frac{\Gamma(\theta+2n)}{2\Gamma(1-\alpha)\Gamma(\theta+2n-1+\alpha)}\sum_{i:|d_i|=1,|b_i|=0}1\nonumber\\
&&-\frac{\Gamma(\theta+2n)}{2\Gamma(1-\alpha)\Gamma(\theta+2n-1+\alpha)}\sum_{i:|b_i|=1,|d_i|=0}1
\end{eqnarray*}
\begin{eqnarray*}
&=&-\frac{\Gamma(\theta+2n)}{2\Gamma(\theta+2n-1+\alpha)}\sum_{i:|b_i|=0,|d_i|=1}\frac{\Gamma(|b_i|+|d_i|)}{\Gamma(|b_i|+|d_i|-\alpha)}\nonumber\\
&&+\frac{\Gamma(\theta+2n)}{2\Gamma(\theta+2n-1+\alpha)}\sum_{i:|d_i|=0,|b_i|=1}\frac{\Gamma(|b_i|+|d_i|)}{\Gamma(|b_i|+|d_i|-\alpha)}\nonumber\\
&&+\frac{\Gamma(\theta+2n)}{2\Gamma(1-\alpha)\Gamma(\theta+2n-1+\alpha)}\sum_{i:|d_i|=1,|b_i|=0}1\nonumber\\
&&-\frac{\Gamma(\theta+2n)}{2\Gamma(1-\alpha)\Gamma(\theta+2n-1+\alpha)}\sum_{i:|b_i|=1,|d_i|=0}1\nonumber\\
&=&0.
\end{eqnarray*}
It follows that \(H(\phi,\psi) = H(\psi,\phi)\), which yields (\ref{Apr28f}). Therefore, the proof of Part (i) is complete.

\noindent (ii) Let $\phi\in B_b(S)$ and $\mu\in{\cal P}_1(S)$. We have
\begin{eqnarray}\label{May6a}
[\phi,\phi]_{\mu}&=&\sum_{i=1}^{\infty}\rho_i^{1+\alpha}\phi^2(\xi_i)+H_{1+\alpha}(\rho)\left(\sum_{i=1}^{\infty}\rho_i\phi(\xi_i)\right)^2-2\sum_{i=1}^{\infty}\rho_i^{1+\alpha}\phi(\xi_i)\sum_{i=1}^{\infty}\rho_i\phi(\xi_i)\nonumber\\
&&+\int_S\phi^2\, d\mu_{na}-\left(\int_S\phi\, d\mu_{na}\right)^2\nonumber\\
&\ge&2\left(\sum_{i=1}^{\infty}\rho_i^{1+\alpha}\phi^2(\xi_i)\right)^{\frac{1}{2}}\left[H_{1+\alpha}(\rho)\right]^{\frac{1}{2}}\left|\sum_{i=1}^{\infty}\rho_i\phi(\xi_i)\right|-2\sum_{i=1}^{\infty}\rho_i^{1+\alpha}\phi(\xi_i)\sum_{i=1}^{\infty}\rho_i\phi(\xi_i)\nonumber\\
&\ge&0.
\end{eqnarray}
Note that for $F(\mu)=f(\langle \phi_1,\mu\rangle,\dots,\langle
\phi_n,\mu\rangle)$ with $\phi_i\in B_b(S)$, $1\le i\le n$, and $f\in
C^{\infty}(\mathbb{R}^n)$,
$$
\nabla F(\mu)=\sum_{i=1}^n\partial_if(\langle \phi_1,\mu\rangle,\dots,\langle
\phi_n,\mu\rangle)\phi_i.
$$
Then, by (\ref{May6a}), we deduce that $({\cal E}, {\cal F})$ is non-negative definite.

By (\ref{May5E1}),   $({\cal E}, {\cal F})$ is closable on $L^2({\cal P}_1(S);\Pi_{\alpha,\theta,\nu_0})$. This form is clearly Markovian. Since the Markovian property is preserved by closure, $({\cal E}, D({\cal E}))$ is a Dirichlet form.  Let $\{\mu_j\}_{j\ge 1}$ be a countable dense set in ${\cal P}_1(S)$. Define $f_{ij}\in {\cal F}$ by $f_{ij}(\mu)=\int_S g_i d(\mu-\mu_j)$ for $\mu\in{\cal P}_1(S)$. Then, we have $\nabla f_{ij}(\mu)=g_i$ and so
$$
[ \nabla
f_{ij}(\mu),\nabla f_{ij}(\mu)]_{\mu}=\langle g_i^2,\mu^{1+\alpha}\rangle+H_{1+\alpha}(\mu)\langle g_i,\mu_a\rangle^2-2\langle g_i,\mu^{1+\alpha}\rangle\langle g_i,\mu_a\rangle\le 4.
$$
Also, $\sup_{i}f_{ij}(\mu)=\varrho(\mu,\mu_j)$. By following the argument in \cite[Theorem 3.4]{RS}, we deduce that $({\cal E}, D({\cal E}))$ is quasi-regular.

\noindent (iii) According to Albeverio, Ma and R\"ockner's theory of quasi-regular Dirichlet forms  (cf. \cite{AMR} and \cite[Chapter IV, Theorem 3.5]{MR}), there exists a time-reversible Markov process $(\{X_t\}_{t\ge 0}$, $\{P_{\mu}\}_{\mu\in {\cal P}_1(S)})$ which is properly associated with $({\cal E}, D({\cal E}))$ and its stationary distribution is $\Pi_{\alpha,\theta,\nu_0}$. Since ${\cal L}1=0$,  $\{X_t\}_{t\ge 0}$ is conservative. Further, by  \cite[Proposition 2.3]{S2}, we find that $({\cal E}, D({\cal E}))$ satisfies the local property. Then,  $\{X_t\}_{t\ge 0}$ is a  diffusion.

\noindent (iv) By virtue of (\ref{May6b}) and (\ref{May6a}),  the proof can be completed exactly like the proof of  \cite[Theorems 6.4, 6.5 and 6.9]{ORS}. We omit the details here.
\hfill $\Box$
\vskip 0.3cm

We would like to point out that the labelled model considered in this section can be easily extended to include interactive selection.
\begin{pro}\label{jkll}
Let $\eta\in L^2({\cal P}_1(S);\Pi_{\alpha,\theta,\nu_0})$ satisfying $\eta^2\ge\varepsilon>0$, $\Pi_{\alpha,\theta,\nu_0}$-a.e., or $\eta\in D({\cal E})$ and $\eta>0$, $\Pi_{\alpha,\theta,\nu_0}$-a.e.. Then, the perturbed bilinear form
\begin{eqnarray*}
{\cal E}^{\eta}(F,G)=\frac{1}{2}\int_{{\cal P}_1(S)}[ \nabla
F(\mu),\nabla G(\mu)]_{\mu}
\eta^2(\mu)\Pi_{\alpha,\theta,\nu_0}(d\mu),\ \ \ \ F,G\in
{\cal F},
\end{eqnarray*}
is closable on $L^2({\cal P}_1(S);\eta^2\Pi_{\alpha,\theta,\nu_0})$ and its closure is a quasi-regular local Dirichlet form which is properly associated with a diffusion process on ${\cal P}_1(S)$.
\end{pro}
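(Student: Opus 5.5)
The plan is to treat ${\cal E}^{\eta}$ as a perturbation of the quasi-regular local Dirichlet form $({\cal E},D({\cal E}))$ of Theorem \ref{thm3.1}, using the standard closability results for perturbed forms, as in Albeverio–R\"ockner and Ma–R\"ockner, and following the Overbeck–R\"ockner–Schmuland treatment of Fleming–Viot processes with interactive selection \cite{ORS}. We work with the measure $m^\eta=\eta^2\Pi_{\alpha,\theta,\nu_0}$ and the same carr\'e du champ $\Gamma(F,G)(\mu)=\frac12[\nabla F(\mu),\nabla G(\mu)]_\mu$. By (\ref{May6a}) and polarization, $\Gamma$ is non-negative definite, so ${\cal E}^\eta$ is a non-negative symmetric form on ${\cal F}$, and it is Markovian on ${\cal F}$ by the chain rule for cylinder functions.

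\emph{Case 1: $\eta^2\ge\varepsilon$.} Take $F_k\in{\cal F}$ with $F_k\to0$ in $L^2(m^\eta)$ and ${\cal E}^\eta(F_k-F_l,F_k-F_l)\to0$. Since $\eta^2\ge\varepsilon$, we have $\|F\|_{L^2(\Pi)}^2\le\varepsilon^{-1}\|F\|^2_{L^2(m^\eta)}$ and ${\cal E}(F,F)\le\varepsilon^{-1}{\cal E}^\eta(F,F)$. Hence $F_k$ is ${\cal E}$-Cauchy with $F_k\to0$ in $L^2(\Pi)$, and closability of ${\cal E}$ (Theorem \ref{thm3.1}(ii)) gives ${\cal E}(F_k,F_k)\to0$.

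To pass back to ${\cal E}^\eta$, note that $\sqrt{\Gamma(F_k-F_l)}$ is Cauchy in $L^2(m^\eta)$, so $\sqrt{\Gamma(F_k)}$ converges in $L^2(m^\eta)$ to some $g$. Since $\int\Gamma(F_k)\,d\Pi\to0$, a subsequence has $\Gamma(F_k)\to0$ $\Pi$-a.e., hence $m^\eta$-a.e. Therefore $g=0$ and ${\cal E}^\eta(F_k,F_k)\to0$. (If $\eta$ is unbounded, so that ${\cal F}\not\subset L^2(m^\eta)$, we would first restrict to bounded cylinder functions, or truncate $\eta$, and use a monotone limit.)

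\emph{Case 2: $\eta\in D({\cal E})$, $\eta>0$.} Here we use the criterion of closability via integration by parts. For $F,G\in{\cal F}$ with $\eta$ replaced by approximants $\eta_k\in{\cal F}$, Theorem \ref{thm3.1}(i) together with (\ref{Apr28e}) gives
$$
{\cal E}^{\eta_k}(F,G)=-\int({\cal L}F)G\eta_k^2\,d\Pi-\int[\nabla F,\nabla(\eta_k^2)]_\mu G\,d\Pi/2 .
$$
This identifies a densely defined operator $-{\cal L}F-\eta^{-1}[\nabla F,\nabla\eta]_\mu$ in $L^2(m^\eta)$, at least on the set of bounded $F$ in ${\cal F}$, which yields closability.

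I expect the main obstacle to be justifying this identity when $\eta$ is only in $D({\cal E})$. I would first try the Albeverio–R\"ockner argument: it works with the closure in $L^2(\Pi)$, uses that $\Gamma$ extends to $D({\cal E})$ as an $L^1$-valued bilinear map, and checks the condition through a sequence of truncations $\eta\wedge n\vee 1/n$. On $\{\eta>1/n\}$ this reduces to Case 1, and one then patches the truncations together with the strict positivity of $\eta$.

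Once closability holds, the closure is a Dirichlet form because the Markov property passes to the closure. For quasi-regularity, I would reuse the functions $f_{ij}$ from the proof of Theorem \ref{thm3.1}(ii), which satisfy $\Gamma(f_{ij})\le2$, so ${\cal E}^\eta(f_{ij},f_{ij})\le2\int\eta^2\,d\Pi<\infty$. The argument of \cite[Theorem 3.4]{RS} then applies verbatim, with the measure $m^\eta$ in place of $\Pi$. Locality follows from \cite[Proposition 2.3]{S2} exactly as before, since the carr\'e du champ is still a derivation. Finally, \cite[Chapter IV, Theorem 3.5]{MR} gives a properly associated diffusion.
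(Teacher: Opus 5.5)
Your overall route is the paper's; it simply defers to \cite[\S 5.2]{ORS} on the basis of Theorem \ref{thm3.1}. Your Case 1, the Markov property, quasi-regularity via the $f_{ij}$ and \cite[Theorem 3.4]{RS}, and locality via \cite[Proposition 2.3]{S2} are fine.

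The gap is Case 2. You assert that $-{\cal L}F-\eta^{-1}[\nabla F,\nabla\eta]_\mu$ lies in $L^2(m^\eta)$, but only the second term is under control, by $\int[\nabla F,\nabla\eta]_\mu^2\,d\Pi_{\alpha,\theta,\nu_0}\le\sup_\mu[\nabla F,\nabla F]_\mu\int[\nabla\eta,\nabla\eta]_\mu\,d\Pi_{\alpha,\theta,\nu_0}$. The first term contains $-\frac{\alpha}{2}L(\mu)\sum_i\partial_if\,\langle\phi_i-\nu_0(\phi_i),\mu\rangle$, where $L$ is the unbounded $\alpha$-diversity. So $\int({\cal L}F)^2\eta^2\,d\Pi_{\alpha,\theta,\nu_0}$ can be infinite when $\eta$ is only in $D({\cal E})$.

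This is exactly where the model departs from the neutral Fleming--Viot setting of \cite{ORS}, whose generator maps cylinder functions to bounded functions. The unbounded object is ${\cal L}F$, not $F$ or $\eta$. Every $F\in{\cal F}$ is bounded and $m^\eta$ is finite, so ${\cal F}\subset L^2(m^\eta)$ automatically; your worries about unbounded $\eta$ and about ``bounded $F$'' are misplaced.

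Your fallback does not close the gap either. With $\eta_n=(\eta\wedge n)\vee\frac1n$ the weight is \emph{increased} on $\{\eta<1/n\}$. An ${\cal E}^\eta$-Cauchy sequence tending to $0$ in $L^2(m^\eta)$ therefore need not be Cauchy or null for ${\cal E}^{\eta_n}$, and the ``patching'' is precisely the unproved localization step.

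A short repair uses only what you already have. Your Case 1 argument actually proves a monotonicity principle: if $\zeta^2\le\eta^2$, $\zeta\neq0$ a.e., and ${\cal E}^\zeta$ is closable on $L^2(\zeta^2\Pi_{\alpha,\theta,\nu_0})$, then ${\cal E}^\eta$ is closable on $L^2(\eta^2\Pi_{\alpha,\theta,\nu_0})$. Indeed, norms and forms are dominated, and $[\nabla F_k,\nabla F_k]_\mu\zeta^2\to0$ a.e.\ along a subsequence forces the $L^2$-limit of $\eta\,[\nabla F_k,\nabla F_k]_\mu^{1/2}$ to vanish. Case 1 is the instance $\zeta\equiv\sqrt{\varepsilon}$.

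Apply the principle with $\zeta=\eta\wedge1$, which is bounded, positive a.e.\ and in $D({\cal E})$.
\begin{itemize}
\item Since ${\cal L}F\in L^2(\Pi_{\alpha,\theta,\nu_0})$ ($L$ has all moments), Theorem \ref{thm3.1}(i) extends to ${\cal E}(F,v)=-\int({\cal L}F)v\,d\Pi_{\alpha,\theta,\nu_0}$ for all $v\in D({\cal E})$.
\item Taking $v=G\zeta^2$ and using the Leibniz rule for the extended square field gives ${\cal E}^\zeta(F,G)=\int\big(-{\cal L}F-\zeta^{-1}[\nabla F,\nabla\zeta]_\mu\big)G\,\zeta^2\,d\Pi_{\alpha,\theta,\nu_0}$.
\item Now $\int({\cal L}F)^2\zeta^2\,d\Pi_{\alpha,\theta,\nu_0}\le\int({\cal L}F)^2\,d\Pi_{\alpha,\theta,\nu_0}<\infty$.
\end{itemize}
Hence ${\cal E}^\zeta$ is closable, and so is ${\cal E}^\eta$.
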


By virtue of Theorem \ref{thm3.1}, the proof of Proposition \ref{jkll} follows the same lines as \cite[\S 5.2]{ORS}. We omit the details here.
\vskip 0.3cm
We now  present a result on the uniqueness  of invariant measures for the operator ${\cal L}$ defined by (\ref{Apr28d}). Denote
$$
\Pi_{\nu_0}=\{\Pi_{\beta,\tau,\nu_0}:\beta\in [0,1),\tau>-\beta\}.
$$
\begin{pro}\label{proMay16b}
Let $\alpha\in(0,1)$ and $\theta>-\alpha$. Then, $\Pi_{\alpha,\theta,\nu_0}$ is the unique element in $\Pi_{\nu_0}$ that is an invariant measure of ${\cal L}$.
\end{pro}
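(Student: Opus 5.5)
The plan is to reduce the labelled statement to the unlabelled one, Proposition \ref{proMay16a}, by testing ${\cal L}$ on cylinder functions that do not see the labels. Invariance of $\Pi_{\alpha,\theta,\nu_0}$ itself follows from Theorem \ref{thm3.1}(i) with $G=1$, which gives $\int {\cal L}F\,d\Pi_{\alpha,\theta,\nu_0}=0$ for all $F\in{\cal F}$. For uniqueness, suppose $\Pi_{\beta,\tau,\nu_0}$ is invariant for ${\cal L}$ with $\beta\in[0,1)$, $\tau>-\beta$. We must show $\beta=\alpha$ and $\tau=\theta$.

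Take a partition $S=A_1\cup\cdots\cup A_N$ into Borel sets with $\nu_0(A_k)=1/N$, which is possible since $\nu_0$ is diffuse. Consider
\[
F_N(\mu)=\sum_{k=1}^N \langle 1_{A_k},\mu\rangle^2 .
\]
Under $\Pi_{\beta,\tau,\nu_0}$ the measure $\mu$ is purely atomic, say $\mu=\sum_i\rho_i\delta_{\xi_i}$, so the nonatomic part of $[\cdot,\cdot]_\mu$ vanishes a.s. Using (\ref{May9k}) with $n=2$ and $\phi_1=\phi_2=1_{A_k}$, and summing over $k$, we get ${\cal L}F_N(\mu)$ expressed through
\[
\sum_{i,j}\rho_i^{1+\alpha}\rho_j 1\{\xi_i,\xi_j \text{ in the same } A_k\},\qquad H_{1+\alpha}(\rho)F_N(\mu),\qquad L(\rho)F_N(\mu),
\]
together with the term $\tfrac{\alpha}{2}L\sum_k 2\nu_0(A_k)\mu(A_k)=\tfrac{\alpha}{N}L$.

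Next, integrate against $\Pi_{\beta,\tau,\nu_0}$, conditioning on $\rho$. Since the labels $\xi_i$ are i.i.d.\ $\nu_0$ and independent of $\rho$, the probability that $\xi_i,\xi_j$ with $i\ne j$ fall in the same cell is $1/N$. Every ``same cell'' indicator can therefore be replaced by $\delta_{ij}+(1-\delta_{ij})/N$. Letting $N\to\infty$, all cross terms vanish by dominated convergence, since $L$ is integrable under every PD law by the computations around Proposition \ref{pro2}. The term $\tfrac{\alpha}{N}L$ also vanishes. What remains is exactly $\int{\cal G}H_2\,d{\rm PD}(\beta,\tau)$, obtained from (\ref{May9e}) with $n_r=2$. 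This reflects the fact that ${\cal L}$ projects to ${\cal G}$ on functions of the masses alone.

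Alternatively, one can avoid the limit by a direct computation: integrate ${\cal L}F$ for $F=\langle1_A,\mu\rangle^2$ with a general $A$, and read off the coefficient of $\nu_0(A)$ as a polynomial in $\nu_0(A)$. The same works with $\langle 1_A,\mu\rangle^3$, which yields $\int{\cal G}H_3\,d{\rm PD}(\beta,\tau)$. In either version, invariance gives
\[
\int{\cal G}H_2\,d{\rm PD}(\beta,\tau)=\int{\cal G}H_3\,d{\rm PD}(\beta,\tau)=0 .
\]
These are precisely the two identities used in the proof of Proposition \ref{proMay16a}, through (\ref{PPP1}), (\ref{PPP2}), (\ref{May9h}) and (\ref{May9j}). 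The case analysis there ($\beta=\alpha$ forces $\tau=\theta$; $\beta\neq\alpha$ leads to a contradiction via $\tau+\beta<0$) applies verbatim.

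The main obstacle is the bookkeeping that shows the label-dependent terms either cancel or have coefficients depending on $\nu_0(A)$ in a way that lets us isolate the pure-mass part. In particular, the $\alpha L\langle\phi-\nu_0(\phi),\mu\rangle$ drift term must be checked to contribute only at lower order, or in the $\nu_0(A)$ powers we discard. I would handle this by the partition limit $N\to\infty$ described above, since there the separation is automatic.
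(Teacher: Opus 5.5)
Your proof is correct. For the second-order step it is essentially the paper's argument; at the third-order step you take a different route.

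The paper also tests ${\cal L}$ on $F(\mu)=\langle 1_A,\mu\rangle^2$ and obtains $\nu_0(A)[1-\nu_0(A)]\int{\cal G}H_2\,d{\rm PD}(\beta,\tau)$, as in (\ref{OP1}) and its $\beta\neq\alpha$ analogue. In fact, conditionally on the masses one has exactly $E[{\cal L}F\mid\rho]=\nu_0(A)(1-\nu_0(A))\,{\cal G}H_2(\rho)$, including the $L$-terms. So your partition limit $N\to\infty$ is not needed.

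The two arguments diverge at the cubic test function:
\begin{itemize}
\item You use $\langle 1_A,\mu\rangle^3$, taking either its coefficient of $\nu_0(A)$ or the partition sum of cubes. This does project onto ${\cal G}H_3$. You then import the whole case analysis of Proposition \ref{proMay16a}: the $\beta$-dependent cubic relation, (\ref{May9j}), and the sign argument.
\item The paper instead takes $G(\mu)=\langle 1_A,\mu\rangle\langle 1_B,\mu\rangle\langle 1_C,\mu\rangle$ with disjoint $A,B,C$ of equal $\nu_0$-measure. Disjointness kills the resampling terms $\langle\phi_i\phi_j,\mu^{1+\alpha}\rangle$ in (\ref{May9k}). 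The sampling formula then gives the $\beta$-free relation $\tau=\alpha(\theta+\alpha+2)-3$, and together with (\ref{May9ds}) this forces $\alpha=1$ at once.
\end{itemize}

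What each route buys: the paper exploits the labels to get a shorter, self-contained contradiction that avoids the heavier $H_3$ algebra. Your route makes explicit that ${\cal L}$ projects to ${\cal G}$, reducing labelled uniqueness to the unlabelled statement with no new moment computations.

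One minor point if you cite Proposition \ref{proMay16a} verbatim. Since $\beta=0$ is allowed in $\Pi_{\nu_0}$, its final step should read $\tau+\beta\le 0$ rather than $<0$. This still contradicts $\tau+\beta>0$.
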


\noindent {\bf Proof.}\ \ By Theorem \ref{thm3.1}, $\Pi_{\alpha,\theta,\nu_0}$ is an invariant measure of ${\cal L}$. Now suppose that $\Pi_{\beta,\tau,\nu_0}$ is an invariant measure of ${\cal L}$  for some $\beta\in[0,1)$ and $\tau>-\beta$. We will show that $\beta=\alpha$ and $\tau=\theta$.

Let $A\in{\cal B}(S)$ with $\nu_0(A)\in (0,1)$. Define $F(\mu)=[\langle 1_A,\mu\rangle]^2$. By (\ref{May9k}), we get
\begin{eqnarray}\label{May9b1}
{\cal L}F(\mu)
&=&\langle 1_A,\mu^{1+\alpha}\rangle+\left\{(\theta+\alpha+2)H_{1+\alpha}(\mu)-\alpha  L(\mu)\right\}[\langle 1_A,\mu\rangle]^2\nonumber\\
&&-(\theta+\alpha+3)\langle 1_A,\mu^{1+\alpha}\rangle\langle 1_A,\mu\rangle+\alpha \nu_0(A)L(\mu)\langle 1_A,\mu\rangle.
\end{eqnarray}
Then, by (\ref{PPP1}) and (\ref{May9b1}), we obtain
{\small\begin{eqnarray}\label{OP1}
&&\int_{{\cal P}_1(S)}{\cal L}F\,d\Pi_{\alpha,\tau,\nu_0}\nonumber\\
&=&\nu_0(A)\bigg\{E_{\alpha,\tau}\left[H_{1+\alpha}\right]+(\theta+\alpha+2)E_{\alpha,\tau}\left[H_{1+\alpha}H_2\right]+(\theta+\alpha+2)\nu_0(A)E_{\alpha,\tau}\left[H_{1+\alpha}-H_{1+\alpha}H_2\right]\nonumber\\
&&\ \ \ \ -\alpha E_{\alpha,\tau}\left[LH_{2}\right]-\alpha\nu_0(A) E_{\alpha,\tau}\left[L-LH_2\right]-(\theta+\alpha+3)E_{\alpha,\tau}\left[H_{2+\alpha}\right]\nonumber\\
&&\ \ \ \ -(\theta+\alpha+3)\nu_0(A)E_{\alpha,\tau}\left[H_{1+\alpha}-H_{2+\alpha}\right]+\alpha\nu_0(A)E_{\alpha,\tau}\left[L\right]\bigg\}\nonumber\\
&=&\nu_0(A)[1-\nu_0(A)]\bigg\{E_{\alpha,\tau}\left[H_{1+\alpha}\right]+(\theta+\alpha+2)E_{\alpha,\tau}\left[H_{1+\alpha}H_2\right]\nonumber\\
&&\ \ \ \ -\alpha E_{\alpha,\tau}\left[LH_{2}\right]-(\theta+\alpha+3)E_{\alpha,\tau}\left[H_{2+\alpha}\right]\bigg\}\nonumber\\
&=&\alpha(\tau - \theta)(\tau+\alpha  )\nu_0(A)[1-\nu_0(A)]\frac{\Gamma(\tau+1)}{\Gamma(\tau+\alpha+3)\Gamma(1-\alpha)}.
\end{eqnarray}}
Assume that $\beta=\alpha$. Then, by  $\int_{{\cal P}_1(S)}{\cal L}F\,d\Pi_{\alpha,\tau,\nu_0}=0$, we get
$$  \tau=\theta.$$

From now on till the end of the proof, we assume that $\beta\not=\alpha$. Then, $L=0$ $\Pi_{\beta,\tau,\nu_0}$ a.s.. Similar to (\ref{OP1}), by virtue of (\ref{PPP2}), we get
\begin{eqnarray*}
\int_{{\cal P}_1(S)}{\cal L}F\,d\Pi_{\beta,\tau,\nu_0}
&=&\nu_0(A)[1-\nu_0(A)]\bigg\{E_{\beta,\tau}\left[H_{1+\alpha}\right]+(\theta+\alpha+2)E_{\beta,\tau}\left[H_{1+\alpha}H_2\right]\nonumber\\
&&\ \ \ \ -(\theta+\alpha+3)E_{\beta,\tau}\left[H_{2+\alpha}\right]\bigg\}\\
&=&-(\tau+ \beta )(\alpha^2 + \theta \alpha+ \alpha - \tau - 2)\frac{\Gamma(\tau+1)\Gamma(1+\alpha-\beta)}{\Gamma(\tau+\alpha+3)\Gamma(1-\beta)}.
\end{eqnarray*}
Then, by $\int_{{\cal P}_1(S)}{\cal L}F\,d\Pi_{\beta,\tau,\nu_0}=0$, we obtain
\begin{eqnarray}\label{May9ds}
\tau=\alpha(\theta+\alpha+1) - 2.
\end{eqnarray}

Let $A,B,C\in{\cal B}(S)$ such that they are disjoint and  $\nu_0(A)=\nu_0(B)=\nu_0(C)\in (0,1)$. Define $G(\mu)=\langle 1_A,\mu\rangle\langle 1_B,\mu\rangle\langle 1_C,\mu\rangle$ for $\mu\in{\cal P}_1(S)$. By (\ref{May9k}),  we get
\begin{eqnarray*}
{\cal L}G(\mu)
&=&\frac{3(\theta+\alpha+3)}{2}H_{1+\alpha}(\mu)\langle 1_A,\mu\rangle\langle 1_B,\mu\rangle\langle 1_C,\mu\rangle\nonumber\\
&&-\frac{\theta+\alpha+5}{2}\bigg\{\langle 1_A,\mu^{1+\alpha}\rangle\langle 1_B,\mu\rangle\langle 1_C,\mu\rangle+\langle 1_A,\mu\rangle\langle 1_B,\mu^{1+\alpha}\rangle\langle 1_C,\mu\rangle\\
&&\ \ \ \ +\langle 1_A,\mu\rangle\langle 1_B,\mu\rangle\langle 1^{1+\alpha}_C,\mu\rangle\bigg\}.
\end{eqnarray*}
Then, by (\ref{Mar22a}), we obtain
{\small\begin{eqnarray*}
&&\frac{2}{3}\int_{{\cal P}_1(S)}{\cal L}G\,d\Pi_{\beta,\tau,\nu_0}\\
&=&[\nu_0(A)]^3\bigg\{(\theta+\alpha+3)\frac{(\tau+\beta)(\tau+2\beta)(\tau+3\beta)\Gamma(\tau+1)\Gamma(1+\alpha-\beta)}{\Gamma(\tau+4+\alpha)\Gamma(1-\beta)}\\
&&\ \ \ \ +3(\theta+\alpha+3)\frac{(\tau+\beta)(\tau+2\beta)\Gamma(\tau+1)\Gamma(2+\alpha-\beta)}{\Gamma(\tau+4+\alpha)\Gamma(1-\beta)}\\
&&\ \ \ \ -(\theta+\alpha+5)\frac{(\tau+\beta)(\tau+2\beta)\Gamma(\tau+1)\Gamma(1+\alpha-\beta)}{\Gamma(\tau+3+\alpha)\Gamma(1-\beta)}\bigg\}\\
&=&[\nu_0(A)]^3\frac{(\tau+\beta)(\tau+2\beta)\Gamma(\tau+1)\Gamma(1+\alpha-\beta)}{\Gamma(\tau+4+\alpha)\Gamma(1-\beta)}\bigg\{(\theta+\alpha+3)(\tau+3\beta)\\
&&\ \ \ \ +3(\theta+\alpha+3)(1+\alpha-\beta)-(\theta+\alpha+5)(\tau+3+\alpha)\bigg\}\\
&=& 2(\alpha^2 + \theta\alpha + 2\alpha - \tau - 3)[\nu_0(A)]^3\frac{(\tau+\beta)(\tau+2\beta)\Gamma(\tau+1)\Gamma(1+\alpha-\beta)}{\Gamma(\tau+4+\alpha)\Gamma(1-\beta)}.
\end{eqnarray*}}
Thus, by $\int_{{\cal P}_1(S)}{\cal L}G\,d\Pi_{\beta,\tau,\nu_0}=0$, we get
\begin{eqnarray*}
\tau=\alpha(\theta+\alpha+2) - 3,
\end{eqnarray*}
which together with (\ref{May9ds}) implies that
$$
\alpha=1.
$$
We have arrived at a contradiction. Therefore, the proof is complete.
\hfill $\Box$

\vskip 0.3cm
Finally, let us explore the relationship between the unlabelled and labelled models. Denote by $({\cal L}, D({\cal L}))$ the generator of the Dirichlet form $({\cal E}, D({\cal E}))$. Define $\Phi:{\cal P}_1(S)\mapsto{\overline{\nabla}}_{\infty}$ by letting $\Phi(\mu)=(\rho_1,\rho_2,\dots)$ if $\rho_i$ is the mass of the $i$-th largest atom of $\mu$ (or 0 if $\mu$ has fewer than $i$ atoms).
\begin{pro}
For any $u\in{\cal P}$, $u\circ\Phi\in D({\cal L})$ and
\begin{eqnarray}\label{May23a}
{\cal L}(u\circ\Phi)=({\cal G}u)\circ\Phi.
\end{eqnarray}
For any $f\in  D({\cal A})$, $f\circ\Phi\in D({\cal E})$ and
\begin{eqnarray*}
{\cal A}(f,f)={\cal E}(f\circ\Phi, f\circ\Phi).
\end{eqnarray*}
\end{pro}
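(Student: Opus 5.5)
Our strategy is to reduce everything to the integration-by-parts formulas of Theorems \ref{thm2.1}(i) and \ref{thm3.1}(i), using the observation that $H_n\circ\Phi$ is a limit of cylinder functions. For $n\ge2$ and $\mu\in{\cal P}_1(S)$ with $\mu_a=\sum_i\rho_i\delta_{\xi_i}$, we have $H_n(\Phi(\mu))=\sum_i\rho_i^n$. Take a sequence of finite Borel partitions $\{A^k_1,\dots,A^k_{m_k}\}$ of $S$ that refine each other and generate ${\cal B}(S)$, with diameters tending to zero. Then
\[
F_k(\mu)=\sum_{j=1}^{m_k}\langle 1_{A^k_j},\mu\rangle^n\in{\cal F}
\]
satisfies $F_k(\mu)\downarrow H_n(\Phi(\mu))$ for every $\mu$, since the non-atomic part contributes nothing in the limit and distinct atoms are eventually separated. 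Products of such $F_k$ approximate $u\circ\Phi$ for $u\in{\cal P}$. Alternatively, we can work directly with the $\Pi_{\alpha,\theta,\nu_0}$-a.s. representation $\mu=\sum_i\rho_i\delta_{\xi_i}$. Because $\nu_0$ is diffuse, the $\xi_i$ are a.s. distinct, so a.s. each atom eventually sits alone in its cell.

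\textbf{Step 1: the generator identity (\ref{May23a}).} I would compute ${\cal L}F_k$ from (\ref{Apr28d}) and show ${\cal L}F_k\to({\cal G}H_n)\circ\Phi$ in $L^2(\Pi_{\alpha,\theta,\nu_0})$, and likewise for products.

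For $F_k=f(\langle1_{A_1},\mu\rangle,\dots)$ with $f(y)=\sum_j y_j^n$:
\begin{itemize}
\item The second-order part is $\frac12\sum_j n(n-1)\langle1_{A_j},\mu\rangle^{n-2}[1_{A_j},1_{A_j}]_\mu$ plus cross terms.
\item By (\ref{May6b}), on purely atomic $\mu$ whose atoms lie in distinct cells, these reduce exactly to $\frac12\sum_{i,l}a_{il}(\rho)\partial_i\partial_l H_n$.
\item In the drift, the term $\langle\phi,\mu^{1+\alpha}\rangle-H_{1+\alpha}\langle\phi,\mu\rangle$ gives $x_i^{1+\alpha}-H_{1+\alpha}x_i$.
\item The term $\alpha L\langle\phi-\nu_0(\phi),\mu\rangle$ gives $\alpha L\rho_i$ minus $\alpha L\,\nu_0(A_j)\,n\langle 1_{A_j},\mu\rangle^{n-1}$ summed over $j$. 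This last piece tends to $0$ because $\max_j\nu_0(A_j)\to0$ and $\sum_j\langle1_{A_j},\mu\rangle^{n-1}\le1$.
\end{itemize}
All terms are dominated by constants times $1+L$, and $L\in L^2$ by the moment formula in the proof of the $\alpha\downarrow0$ proposition, so dominated convergence applies.

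\textbf{Step 2: passing to $D({\cal L})$.} Since $F_k\to u\circ\Phi$ and ${\cal L}F_k\to({\cal G}u)\circ\Phi$ in $L^2$, and the generator is closed, we get $u\circ\Phi\in D({\cal L})$ with (\ref{May23a}). One also needs $F_k\in D({\cal L})$ with ${\cal L}F_k$ given by (\ref{Apr28d}); this follows from Theorem \ref{thm3.1}(i).

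\textbf{Step 3: the energy identity.} For $u\in{\cal P}$, Step 1 and Theorem \ref{thm2.1}(i) give
\[
{\cal E}(u\circ\Phi,u\circ\Phi)=-\int({\cal G}u)\circ\Phi\cdot u\circ\Phi\,d\Pi_{\alpha,\theta,\nu_0}=-\int({\cal G}u)u\,d{\rm PD}(\alpha,\theta)={\cal A}(u,u),
\]
using that the image of $\Pi_{\alpha,\theta,\nu_0}$ under $\Phi$ is ${\rm PD}(\alpha,\theta)$. Thus $u\mapsto u\circ\Phi$ is an isometry from $({\cal P},{\cal A}_1^{1/2})$ into $D({\cal E})$. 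For $f\in D({\cal A})$, choose $u_k\in{\cal P}$ converging to $f$ in ${\cal A}_1$-norm. Then $u_k\circ\Phi$ is ${\cal E}_1$-Cauchy and converges in $L^2$ to $f\circ\Phi$. By closedness, $f\circ\Phi\in D({\cal E})$ and the energies agree.

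\textbf{Main obstacle.} I expect the main difficulty to be Step 1: verifying that the cross terms of $[\cdot,\cdot]_\mu$ between different cells reproduce $a_{il}$ for $i\ne l$. This requires keeping track of cells containing several atoms and justifying the domination in $L^2$. I would handle it by restricting to the event where the first $N$ atoms lie in distinct cells, and bounding the tail contribution by $\sum_{i>N}\rho_i^{1+\alpha}$, whose expectation is controlled by Proposition \ref{proMay14a}.
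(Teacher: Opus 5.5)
Your proof is correct, but it follows a different route from the paper's. The paper never approximates $u\circ\Phi$ by cylinder functions. For $u=\prod_p H_{m_p}$ it tests against polynomial cylinder functions $G=\prod_i\langle\psi_i,\mu\rangle$ and expands both $\int({\cal G}u)\circ\Phi\,G\,d\Pi_{\alpha,\theta,\nu_0}$ and $\int(u\circ\Phi)\,{\cal L}G\,d\Pi_{\alpha,\theta,\nu_0}$ as sums over set partitions, using the fractional sampling formulas of Propositions \ref{proMay14a} and \ref{pro2}. It then checks that the coefficients $H_1(b,d)$ and $H_2(b,d)$ coincide, and reads off ${\cal L}(u\circ\Phi)=({\cal G}u)\circ\Phi$ from this duality. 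The energy identity is then derived exactly as in your Step 3.

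You replace the partition combinatorics by a pointwise evaluation of ${\cal L}F_k$ plus dominated convergence. You get $u\circ\Phi\in D({\cal L})$ as a graph-norm limit of elements of ${\cal F}\subset D({\cal L})$. This is legitimate, since Theorem \ref{thm3.1}(i) holds on the form core ${\cal F}$ and ${\cal L}F\in L^2$ because $L\in L^2$. This buys something real. A duality identity checked only against polynomial cylinder functions by itself places $u\circ\Phi$ in the domain of the adjoint of ${\cal L}|_{{\cal F}}$. Passing from there to $D({\cal L})$ tacitly uses that these functions form an operator core, which your closedness argument never needs. The paper's route, in turn, is purely moment-based, reuses the computations behind Theorems \ref{thm2.1} and \ref{thm3.1}, and involves no choice of partitions or limiting procedure.

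Two simplifications for your write-up.

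First, what you call the main obstacle is not one. For every purely atomic $\mu=\sum_i\rho_i\delta_{\xi_i}$ and all $\phi,\psi\in B_b(S)$ one has exactly $[\phi,\psi]_\mu=\sum_{i,l}a_{il}(\rho)\phi(\xi_i)\psi(\xi_l)$. So ${\cal L}$ of a product of your $F_k$'s is an explicit polynomial in $H_{1+\alpha}(\mu)$, $L(\mu)$ and sums of the form $\sum_j y_j^r$, $\sum_j y_j^r\langle 1_{A_j},\mu^{1+\alpha}\rangle$ and $\sum_j\nu_0(A_j)y_j^r$, where $y_j=\mu(A_j)$. These converge a.s. by elementary bounds such as $0\le\sum_j y_j^r-H_r(\rho)\le r\sum_{i>N}\rho_i$ for $r\ge1$, once the $N$ largest atoms lie in distinct cells. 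All terms are bounded by $C(1+L)$.

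Second, finite partitions of a non-totally-bounded $S$ cannot have small diameters, and you need neither that nor convergence for every $\mu$. Nested partitions with $\max_j\nu_0(A^k_j)\to0$ suffice. This condition kills the $\nu_0(\phi)$ part of the drift. It also separates atoms a.s.: the atom locations are i.i.d.\ $\nu_0$, so two of them share a cell at level $k$ with probability $\sum_j\nu_0(A^k_j)^2\le\max_j\nu_0(A^k_j)\to0$.
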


\noindent {\bf Proof.}\ \ Let
$
u=\prod_{p=1}^kH_{m_p},
$
where $m_1,\dots,m_k\in\{2,3,\dots\}$  and $k\in\mathbb{N}$. Denote  $m=\sum_{p=1}^km_p$. Define
$$
F(\mu)=u\circ\Phi(\mu),\ {\cal L}^*F(\mu)=({\cal G}u)\circ\Phi(\mu),\ \mu\in{\cal P}_1(S).
$$
Let
$$
G_{\psi,\dots,\psi_n}(\mu)=\langle \psi_1,\mu\rangle\cdots\langle
\psi_n,\mu\rangle,
$$
where $\psi_i\in B_b(S)$, $1\le i\le n$, $n\in\mathbb{N}$. Then, by (\ref{May14A}) and  (\ref{May9k}), we get
{\small\begin{eqnarray}\label{May24a}
&&\int_{{\cal P}_1(S)}({\cal L}^*F)G_{\psi,\dots,\psi_n}\,d
\Pi_{\alpha,\theta,\nu_0}\nonumber\\
&=&\frac{m(\theta+\alpha+m)}{2}\int_{{\cal P}_1(S)}\langle 1,\mu^{1+\alpha}\rangle\prod_{p=1}^k\langle 1,\mu^{m_p}\rangle\prod_{i=1}^n\langle \psi_i,\mu\rangle\Pi_{\alpha,\theta,\nu_0}(d\mu)\nonumber\\
&&-\frac{m\alpha}{2}\int_{{\cal P}_1(S)}L(\mu)\prod_{p=1}^k\langle 1,\mu^{m_p}\rangle\prod_{i=1}^n\langle \psi_i,\mu\rangle\Pi_{\alpha,\theta,\nu_0}(d\mu)\nonumber\\
&&+\frac{1}{2}\sum_{p=1}^km_p(m_p-1)\int_{{\cal P}_1(S)}\langle 1,\mu^{m_p-1+\alpha}\rangle\prod_{s\not=p}\langle 1,\mu^{m_s}\rangle\prod_{i=1}^n\langle \psi_i,\mu\rangle\Pi_{\alpha,\theta,\nu_0}(d\mu)\nonumber\\
&&-\frac{1}{2}\sum_{p=1}^km_p(\theta+\alpha+2m_p-1)\int_{{\cal P}_1(S)}\langle 1,\mu^{m_p+\alpha}\rangle\prod_{s\not=p}\langle 1,\mu^{m_s}\rangle\prod_{i=1}^n\langle \psi_i,\mu\rangle\Pi_{\alpha,\theta,\nu_0}(d\mu)\nonumber\\
&&+\frac{1}{2}\sum_{p\not=q}m_pm_q\int_{{\cal P}_1(S)}\langle 1,\mu^{m_p+m_q-1+\alpha}\rangle\prod_{s\not=p,q}\langle 1,\mu^{m_s}\rangle\prod_{i=1}^n\langle \psi_i,\mu\rangle\Pi_{\alpha,\theta,\nu_0}(d\mu)\nonumber\\
&&-\sum_{p=1}^km_p\int_{{\cal P}_1(S)}\langle 1,\mu^{m_p+\alpha}\rangle\sum_{q\not=p}m_q\prod_{s\not=p}\langle 1,\mu^{m_s}\rangle\prod_{i=1}^n\langle \psi_i,\mu\rangle\Pi_{\alpha,\theta,\nu_0}(d\mu),
\end{eqnarray}}

\noindent and
{\small\begin{eqnarray}\label{May24b}
&&\int_{{\cal P}_1(S)}F{\cal L}G_{\psi,\dots,\psi_n}\,d
\Pi_{\alpha,\theta,\nu_0}\nonumber\\
&=&\sum_{1\le i<j\le n}\int_{{\cal P}_1(S)}\prod_{p=1}^k\langle 1,\mu^{m_p}\rangle\langle \psi_i\psi_j,\mu^{1+\alpha}\rangle \prod_{l\not=i,j}\langle \psi_l,\mu\rangle\Pi_{\alpha,\theta,\nu_0}(d\mu)\nonumber\\
&&+\frac{n(\theta+\alpha+n)}{2}\int_{{\cal P}_1(S)}\langle 1,\mu^{1+\alpha}\rangle\prod_{p=1}^k\langle 1,\mu^{m_p}\rangle \prod_{i=1}^n\langle \psi_i,\mu\rangle\Pi_{\alpha,\theta,\nu_0}(d\mu)\nonumber\\
&&-\frac{n\alpha}{2}\int_{{\cal P}_1(S)}L(\mu)\prod_{p=1}^k\langle 1,\mu^{m_p}\rangle\prod_{i=1}^n\langle \psi_i,\mu\rangle\Pi_{\alpha,\theta,\nu_0}(d\mu)\nonumber\\
&&-\left(\frac{\theta+\alpha+1}{2}+n-1\right)\sum_{i=1}^n\int_{{\cal P}_1(S)}\prod_{p=1}^k\langle 1,\mu^{m_p}\rangle\langle \psi_i,\mu^{1+\alpha}\rangle\prod_{l\not=i}\langle \psi_l,\mu\rangle\Pi_{\alpha,\theta,\nu_0}(d\mu)\nonumber\\
&&+\frac{\alpha}{2}\sum_{i=1}^n\nu_0(\psi_i)\int_{{\cal P}_1(S)}L(\mu)\prod_{p=1}^k\langle 1,\mu^{m_p}\rangle\prod_{l\not=i}\langle \psi_l,\mu\rangle\Pi_{\alpha,\theta,\nu_0}(d\mu).
\end{eqnarray}}

Denote by $\sigma(n+k,l)$  the collection of partitions $c$ of $\{1,\dots,n+k\}$ into $l$ non-empty sets $c_1,\dots,c_l$ such that $\min\, c_1<\cdots<\min\, c_l$ and the number of elements in $c_i$ is denoted by $|c_i|$.
Then, for every $1\le l\le n+k$ and $c$ in $\sigma(n+k,l)$ there is a unique way of finding a partition $b$ of $\{1,\dots,n\}$ and a partition $d$ of $\{1,\dots,k\}$ such that
$$
b_i\bigcup d_i\not=\emptyset,\ c_i=b_i\bigcup\{n+j:j\in d_i\},\ i=1,\dots,l.
$$
For $1\le j\le k$, define
$$
M_j=\sum_{i\in d_j}m_i.
$$
Write $c=(b,d)$,
\begin{eqnarray*}
&&\int_{{\cal P}_1(S)}G_{\psi,\dots,\psi_n}({\cal L}^*F)\,d
\Pi_{\alpha,\theta,\nu_0}\\
&=&\sum_{l=1}^{n+k}\sum_{(b,d)\in\sigma(n+k,l)}H_1(b,d)\frac{\prod_{j=1}^{l-1}(\theta+j\alpha)}{\prod_{j=1}^{n+k-1}(\theta+j)}\prod_{j=1}^l\frac{\Gamma(|b_j|+M_j-\alpha)}{\Gamma(1-\alpha)}\prod_{i=1}^l\nu_0\left(\prod_{j\in b_i}\psi_j\right),
\end{eqnarray*}
and
\begin{eqnarray*}
&&\int_{{\cal P}_1(S)}({\cal L}G_{\psi,\dots,\psi_n})F\,d
\Pi_{\alpha,\theta,\nu_0}\\
&=&\sum_{l=1}^{n+k}\sum_{(b,d)\in\sigma(n+k,l)}H_2(b,d)\frac{\prod_{j=1}^{l-1}(\theta+j\alpha)}{\prod_{j=1}^{n+k-1}(\theta+j)}\prod_{j=1}^l\frac{\Gamma(|b_j|+M_j-\alpha)}{\Gamma(1-\alpha)}\prod_{i=1}^l\nu_0\left(\prod_{j\in b_i}\psi_j\right).
\end{eqnarray*}

By (\ref{May24a}), similar to (\ref{May23b}), we get
{\small\begin{eqnarray}\label{May23a1}
H_1(b,d)&=&\frac{\Gamma(\theta+m+n)}{2\Gamma(\theta+\alpha+m+n)}\Bigg\{-\frac{mn(\theta+l\alpha)}{(\theta+\alpha+m+n)\Gamma(1-\alpha)}+\frac{m(\theta+\alpha+m)}{\theta+\alpha+m+n}\sum_{j=1}^l\frac{\Gamma(|b_j|+M_j+1)}{\Gamma(|b_j|+M_j-\alpha)}\nonumber\\
&&\ \ +(\theta+\alpha+m+n-1)\sum_{j:|d_j|\ge1}\frac{\Gamma(|b_j|+M_j-1)}{\Gamma(|b_j|+M_j-\alpha)}M_j(M_j-1)\nonumber\\
&&\ \ -(\theta+\alpha+2m-1)\sum_{j=1}^l\frac{\Gamma(|b_j|+M_j)}{\Gamma(|b_j|+M_j-\alpha)}M_j\Bigg\}.
\end{eqnarray}}

\noindent By (\ref{May24b}), similar to (\ref{May23p}), we get
{\small\begin{eqnarray}\label{May23a2}
H_2(b,d)&=&\frac{\Gamma(\theta+m+n)}{2\Gamma(\theta+m+n-1+\alpha)}\sum_{j:|b_j|\ge2}\frac{|b_j|(|b_j|-1)\Gamma(|b_j|+M_j-1)}{\Gamma(|b_j|+M_j-\alpha)}\nonumber\\
&&+\frac{n(\theta+\alpha+n)\Gamma(\theta+m+n)}{2\Gamma(\theta+m+n+1+\alpha)}\left[\frac{\theta+l\alpha}{\Gamma(1-\alpha)}+\sum_{j=1}^l\frac{\Gamma(|b_j|+M_j+1)}{\Gamma(|b_j|+M_j-\alpha)}\right]\nonumber\\
&&-\frac{n(\theta+l\alpha)\Gamma(\theta+m+n)}{2\Gamma(1-\alpha)\Gamma(\theta+m+n+\alpha)}\nonumber\\
&&-\frac{(\theta+\alpha+2n-1)\Gamma(\theta+m+n)}{2\Gamma(\theta+m+n+\alpha)}\sum_{j:|b_j|\ge1}\frac{|b_j|\Gamma(|b_j|+M_j)}{\Gamma(|b_j|+M_j-\alpha)}\nonumber\\
&&+\frac{\Gamma(\theta+m+n)}{2\Gamma(1-\alpha)\Gamma(\theta+m+n-1+\alpha)}\sum_{j:|b_j|=1,|d_j|=0}1\nonumber\\
&=&\frac{\Gamma(\theta+m+n)}{2\Gamma(\theta+m+n+\alpha)}\bigg\{(\theta+m+n-1+\alpha)\sum_{j:|b_j|\ge2}\frac{|b_j|(|b_j|-1)\Gamma(|b_j|+M_j-1)}{\Gamma(|b_j|+M_j-\alpha)}\nonumber\\
&&\ \ \ \ +\frac{n}{\theta+m+n+\alpha}\bigg[-\frac{m(\theta+l\alpha)}{\Gamma(1-\alpha)}+(\theta+\alpha+n)\sum_{j=1}^l\frac{\Gamma(|b_j|+M_j+1)}{\Gamma(|b_j|+M_j-\alpha)}\bigg]\nonumber\\
&&\ \ \ \ -(\theta+2n-1+\alpha)\sum_{j:|b_j|\ge1}\frac{|b_j|\Gamma(|b_j|+M_j)}{\Gamma(|b_j|+M_j-\alpha)}\nonumber\\
&&\ \ \ \ +\frac{\theta+m+n-1+\alpha}{\Gamma(1-\alpha)}\sum_{j:|b_j|=1,|d_j|=0}1\bigg\}.
\end{eqnarray}}

By (\ref{May23a1}) and  (\ref{May23a2}), we obtain
{\small\begin{eqnarray*}
&&H_2(b,d)-H_1(b,d)\\
&=&\frac{\Gamma(\theta+m+n)}{2\Gamma(\theta+m+n+\alpha)}\bigg\{(\theta+\alpha+m+n-1)\sum_{j:|b_j|\ge2}\frac{|b_j|(|b_j|-1)\Gamma(|b_j|+M_j-1)}{\Gamma(|b_j|+M_j-\alpha)}\nonumber\\
&&\ \ \ \ +(n-m)\sum_{j=1}^l\frac{\Gamma(|b_j|+M_j+1)}{\Gamma(|b_j|+M_j-\alpha)}-(\theta+\alpha+2n-1)\sum_{j:|b_j|\ge1}\frac{|b_j|\Gamma(|b_j|+M_j)}{\Gamma(|b_j|+M_j-\alpha)}\nonumber\\
&&\ \ \ \ +\frac{\theta+\alpha+m+n-1}{\Gamma(1-\alpha)}\sum_{j:|b_j|=1,|d_j|=0}1
\end{eqnarray*}}
{\small\begin{eqnarray*}
&&\ \ \ \ -(\theta+\alpha+m+n-1)\sum_{j:|d_j|\ge1}\frac{\Gamma(|b_j|+M_j-1)}{\Gamma(|b_j|+M_j-\alpha)}M_j(M_j-1)\\
&&\ \ \ \ +(\theta+\alpha+2m-1)\sum_{j=1}^l\frac{\Gamma(|b_j|+M_j)}{\Gamma(|b_j|+M_j-\alpha)}M_j\bigg\}\\
&=&\frac{\Gamma(\theta+m+n)}{2\Gamma(\theta+m+n+\alpha)}\bigg\{(\theta+\alpha+m+n-1)\sum_{j=1}^l\frac{(|b_j|-M_j)\Gamma(|b_j|+M_j)}{\Gamma(|b_j|+M_j-\alpha)}\nonumber\\
&&\ \ \ \ +(n-m)\sum_{j=1}^l\frac{\Gamma(|b_j|+M_j+1)}{\Gamma(|b_j|+M_j-\alpha)}-(\theta+\alpha+2n-1)\sum_{j=1}^l\frac{|b_j|\Gamma(|b_j|+M_i)}{\Gamma(|b_j|+M_j-\alpha)}\nonumber\\
&&\ \ \ \ +(\theta+\alpha+2m-1)\sum_{j=1}^l\frac{M_j\Gamma(|b_j|+M_j)}{\Gamma(|b_j|+M_j-\alpha)}\bigg\}\\
&=&0.
\end{eqnarray*}}

\noindent Then,
\begin{eqnarray*}
\int_{{\cal P}_1(S)}({\cal L}^*F)G_{\psi,\dots,\psi_n}\,d
\Pi_{\alpha,\theta,\nu_0}=\int_{{\cal P}_1(S)}F{\cal L}G_{\psi,\dots,\psi_n}\,d
\Pi_{\alpha,\theta,\nu_0},
\end{eqnarray*}
which implies that $F\in D({\cal L})$  and
$$
{\cal L}F={\cal L}^*F.
$$
Hence, (\ref{May23a}) holds and
$$
{\cal A}(u,u)=-\int_{{\overline{\nabla}}_{\infty}}({\cal G}u)u\,d{\rm PD}(\alpha,\theta)=-\int_{{\cal P}_1(S)}\left[{\cal L}(u\circ\Phi)\right](u\circ\Phi)\, d
\Pi_{\alpha,\theta,\nu_0}={\cal E}(u\circ\Phi,u\circ\Phi ).
$$
Since $u\in{\cal P}$ is arbitrary and ${\cal P}$ is dense in $D({\cal A})$, the proof is complete.
\hfill $\Box$

\section{The finite dimensional model and approximation}\setcounter{equation}{0}

Ferguson's Dirichlet process has the partition property, namely, for any finite measurable partition $(A_1, \ldots, A_m)$ of $S$ the projection $(\Xi_{0,\theta,\nu_0}(A_1), \ldots, \Xi_{0,\theta, \nu_0}(A_m))$ follows the Dirichlet distribution with parameters $(\theta\nu_0(A_1), \ldots, \theta\nu_0(A_m))$. The same property does not hold in general for the Pitman-Yor process.  But for $\alpha=\frac{1}{2}$, the Pitman-Yor process is intimately connected to the excursions of one dimensional Brownian motion. The distributions of projections on any measurable partitions of $S$
have explicit probability density functions \cite{Car}. This makes it possible to consider the finite dimensional approximations to the infinite dimensional models.  The generators we studied in Sections 2 and 3 are motivated by these approximations.

Throughout this section, we assume that $\alpha=\frac{1}{2}$ and $\theta>-\alpha$.

\subsection{The finite dimensional model}

For $d\in\mathbb{N}$, define
$$
\Delta_d=\left\{x=(x_1,\dots, x_d)\in \mathbb{R}^{d}:x_i\ge 0\ {\rm
and}\ \sum_{i=1}^dx_i\le 1\right\}
$$
with $\Delta^{\circ}_{d}$ denoting the interior of $\Delta_d$. Let $(B_1,\dots,B_{d+1})$ be a measurable partition of $S$ with
$$\nu_0(B_i)=p_i=\frac{1}{d+1},\ \ \ \ 1\le i\le d+1.
$$
Set $x_{d+1}=1-\sum_{i=1}^dx_i$ and define
\begin{eqnarray}\label{May6Q1}
\varsigma_d(x_1,\dots,x_d)=\frac{\Gamma(\theta+\frac{d+1}{2})\prod_{i=1}^{d+1}p_i}{\pi^{\frac{d}{2}}\Gamma(\theta+\frac{1}{2})}\frac{\prod_{i=1}^{d+1}x_i^{-\frac{3}{2}}}{\left(\sum_{i=1}^{d+1}\frac{p_i^2}{x_i}\right)^{\theta+\frac{d+1}{2}}},\ \ \ \ (x_1,\dots, x_d)\in \Delta^{\circ}_d.
\end{eqnarray}
By \cite[Theorem 3.1]{C}, the joint density function of the random vector $(\mu(B_1),\dots,\mu(B_d))$ under $\Pi_{\alpha,\theta,\nu_0}$ is given by $\varsigma_d$.

Define
\begin{eqnarray}\label{May7a1}
U^{(d)}(x)=\frac{3}{2}\sum_{i=1}^{d+1}\log x_i+\left(\theta+\frac{d+1}{2}\right)\log\sum_{i=1}^{d+1}\frac{1}{x_i}.
\end{eqnarray}
Then,
$$
\varsigma_d(x)=\frac{e^{-U^{(d)}(x)}}{\int_{\Delta_d}e^{-U^{(d)}(y)}dy},\ \ \ \ x\in \Delta^{\circ}_d.
$$
For $x\in\Delta_d$, define
$$
H^{(d)}_{\frac{3}{2}}(x)=\sum_{i=1}^{d+1}x_i^{\frac{3}{2}},
$$
and
\begin{eqnarray*}
\left\{
\begin{array}{ll}
a^{(d)}_{ii}(x)=(1-2x_i)x_i^{\frac{3}{2}}+x_i^2H^{(d)}_{\frac{3}{2}}(x),&\ \  1\le i\le d,\\
a^{(d)}_{ij}(x)=x_ix_j\left[H^{(d)}_{\frac{3}{2}}(x)-x_i^{\frac{1}{2}}-x_j^{\frac{1}{2}}\right],&\ \ i\not=j,\ 1\le i,j\le d.
\end{array}
\right.
\end{eqnarray*}
Let $A^{(d)}(x)$ be the $d\times d$ symmetric matrix given by
\begin{eqnarray}\label{May8a}
A^{(d)}(x)&=&{\rm diagonal}\left\{x^{\frac{3}{2}}\right\}+H^{(d)}_{\frac{3}{2}}(x)(x_1,\dots,x_d)^T(x_1,\dots,x_d)\nonumber\\
&&-(x_1^{\frac{3}{2}},\dots,x_d^{\frac{3}{2}})^T(x_1,\dots,x_d)-(x_1,\dots,x_d)^T(x_1^{\frac{3}{2}},\dots,x_d^{\frac{3}{2}}).
\end{eqnarray}
Hereafter, $(x_1,\dots,x_d)^T$ denotes the transpose of the row vector $(x_1,\dots,x_d)$.
For any $y_1,\dots,y_d\in\mathbb{R}$, we have
\begin{eqnarray}\label{May17C}
&&(y_1,\dots,y_d)A^{(d)}(x)(y_1,\dots,y_d)^T\nonumber\\
&=&\sum_{i=1}^dx_i^{\frac{3}{2}}y_i^2+H^{(d)}_{\frac{3}{2}}(x)\left(\sum_{i=1}^dx_iy_i\right)^2-2\sum_{i=1}^dx_i^{\frac{3}{2}}y_i\sum_{i=1}^dx_iy_i\nonumber\\
&\ge&2\left(\sum_{i=1}^dx_i^{\frac{3}{2}}y_i^2\right)^{\frac{1}{2}}\left[H^{(d)}_{\frac{3}{2}}(x)\right]^{\frac{1}{2}}\left|\sum_{i=1}^dx_iy_i\right|-2\sum_{i=1}^dx_i^{\frac{3}{2}}y_i\sum_{i=1}^dx_iy_i\nonumber\\
&\ge&0.
\end{eqnarray}

Define
$$
C^{\infty}({\Delta_d})=\{f|_{\Delta_d}:f\in
C^{\infty}(\mathbb{R}^d)\}.
$$
Consider the following non-negative definite symmetric form on $L^2({\Delta_d}; \varsigma_d(x)dx)$:
\begin{eqnarray}\label{May6f}
{\cal E}^{(d)}(f,g)=\frac{1}{2}\sum_{i,j=1}^{d}\int_{\Delta_d}
a^{(d)}_{ij}(x)\partial_if(x)\partial_jg(x)\varsigma_d(x)dx,\ \ \ \
f,g\in  C^{\infty}({\Delta_d}).
\end{eqnarray}
Define
$$
2b^{(d)}(x)=\nabla\cdot A^{(d)}(x)-A^{(d)}(x)\nabla U^{(d)}(x).
$$
Then, informally, the generator of $({\cal E}^{(d)}, C^{\infty}({\Delta_d}))$ is given by
$$
{\cal L}^{(d)}f=\frac{1}{2}{\rm Tr}\left(A^{(d)}\nabla^2 f\right)+b^{(d)}\cdot\nabla f,\ \ \ \ f\in  C^{\infty}({\Delta_d}).
$$
We have
\begin{eqnarray*}
\sum_{k=1}^d\frac{\partial a^{(d)}_{1k}}{\partial x_k}
&=&\frac{3}{2}x_1^{\frac{1}{2}}-5x_1^{\frac{3}{2}}+2x_1H^{(d)}_{\frac{3}{2}}(x)+\frac{3}{2}x_1^{\frac{5}{2}}-\frac{3}{2}x_1^2x_{d+1}^{\frac{1}{2}}\\
&&+x_1\sum_{k=2}^d\left[H^{(d)}_{\frac{3}{2}}(x)-x_1^{\frac{1}{2}}-\frac{3}{2}x_k^{\frac{1}{2}}+\frac{3}{2}x_k^{\frac{3}{2}}-\frac{3}{2}x_kx_{d+1}^{\frac{1}{2}}\right]\\
&=&\frac{3}{2}x_1^{\frac{1}{2}}-\left(d+\frac{5}{2}\right)x_1^{\frac{3}{2}}+(d+1)x_1H^{(d)}_{\frac{3}{2}}(x)+\frac{3}{2}x_1^{\frac{5}{2}}-\frac{3}{2}x_1x_{d+1}^{\frac{1}{2}}(1-x_{d+1})\\
&&-\frac{3}{2}x_1\sum_{k=1}^dx_k^{\frac{1}{2}}+\frac{3}{2}x_1\left[H^{(d)}_{\frac{3}{2}}(x)-x_1^{\frac{3}{2}}-x_{d+1}^{\frac{3}{2}}\right]\\
&=&\frac{3}{2}x_1^{\frac{1}{2}}-\left(d+\frac{5}{2}\right)x_1^{\frac{3}{2}}+\left(d+\frac{5}{2}\right)x_1H^{(d)}_{\frac{3}{2}}(x)-\frac{3}{2}x_1\sum_{k=1}^{d+1}x_k^{\frac{1}{2}},
\end{eqnarray*}
and
\begin{eqnarray*}
-\sum_{k=1}^da^{(d)}_{1k}(x)\frac{\partial U^{(d)}}{\partial x_k}
&=&-\left[(1-2x_1)x_1^{\frac{3}{2}}+x_1^2H^{(d)}_{\frac{3}{2}}(x)\right]\left[\frac{3}{2x_1}-\frac{3}{2x_{d+1}}-\left(\theta+\frac{d+1}{2}\right)\frac{x_1^{-2}-x^{-2}_{d+1}}{\sum_{k=1}^{d+1}{x_k}^{-1}}\right]\\
&&-x_1\sum_{k=2}^dx_k\left[H^{(d)}_{\frac{3}{2}}(x)-x_1^{\frac{1}{2}}-x_k^{\frac{1}{2}}\right]\left[\frac{3}{2x_k}-\frac{3}{2x_{d+1}}-\left(\theta+\frac{d+1}{2}\right)\frac{x_k^{-2}-x^{-2}_{d+1}}{\sum_{k=1}^{d+1}{x_k}^{-1}}\right]\\
&=&-\frac{3\{x_1^{\frac{1}{2}}+dx_1[H^{(d)}_{\frac{3}{2}}(x)-x_1^{\frac{1}{2}}]-x_1\sum_{k=1}^dx_k^{\frac{1}{2}}\}}{2}+\frac{3x_1[x_1^{\frac{1}{2}}+x_{d+1}^{\frac{1}{2}}-H^{(d)}_{\frac{3}{2}}(x)]}{2}\\
&&+\left(\theta+\frac{d+1}{2}\right)x_1\frac{(1-2x_1)x_1^{\frac{1}{2}}(x_1^{-2}-x^{-2}_{d+1})-\sum_{k=2}^dx_k(x_1^{\frac{1}{2}}+x_k^{\frac{1}{2}})(x_k^{-2}-x^{-2}_{d+1})}{\sum_{k=1}^{d+1}{x_k}^{-1}}\\
&&+\left(\theta+\frac{d+1}{2}\right)x_1H^{(d)}_{\frac{3}{2}}(x)\frac{\sum_{k=1}^dx_k(x_k^{-2}-x^{-2}_{d+1})}{\sum_{k=1}^{d+1}{x_k}^{-1}}\\
&=&-\frac{3\{x_1^{\frac{1}{2}}-x_1x_{d+1}^{\frac{1}{2}}+(d+1)x_1[H^{(d)}_{\frac{3}{2}}(x)-x_1^{\frac{1}{2}}]-x_1\sum_{k=1}^dx_k^{\frac{1}{2}}\}}{2}\\
&&+\left(\theta+\frac{d+1}{2}\right)\frac{x_1[x_1^{-\frac{3}{2}}-\sum_{k=1}^dx_k^{-\frac{1}{2}}(1+x_1^{\frac{1}{2}}x_k^{-\frac{1}{2}})]}{\sum_{k=1}^{d+1}{x_k}^{-1}}\\
&&+\left(\theta+\frac{d+1}{2}\right)\frac{x_1[H^{(d)}_{\frac{3}{2}}(x)-x_{d+1}(x_1^{\frac{1}{2}}+x_{d+1}^{\frac{1}{2}})]}{x_{d+1}^2\sum_{k=1}^{d+1}{x_k}^{-1}}\\
&&+\left(\theta+\frac{d+1}{2}\right)x_1H^{(d)}_{\frac{3}{2}}(x)-\left(\theta+\frac{d+1}{2}\right)\frac{x_1H^{(d)}_{\frac{3}{2}}(x)}{x_{d+1}^2\sum_{k=1}^{d+1}{x_k}^{-1}}\\
&=&-\frac{3\{x_1^{\frac{1}{2}}+(d+1)x_1[H^{(d)}_{\frac{3}{2}}(x)-x_1^{\frac{1}{2}}]-x_1\sum_{k=1}^{d+1}x_k^{\frac{1}{2}}\}}{2}\\
&&+\left(\theta+\frac{d+1}{2}\right)\frac{x_1[x_1^{-\frac{3}{2}}-\sum_{k=1}^{d+1}x_k^{-\frac{1}{2}}(1+x_1^{\frac{1}{2}}x_k^{-\frac{1}{2}})]}{\sum_{k=1}^{d+1}{x_k}^{-1}}+\left(\theta+\frac{d+1}{2}\right)x_1H^{(d)}_{\frac{3}{2}}(x)\\
&=&-\frac{3x_1^{\frac{1}{2}}(1-x_1^{\frac{1}{2}}\sum_{k=1}^{d+1}x_k^{\frac{1}{2}})}{2}-(d+1-\theta)x_1\left[H^{(d)}_{\frac{3}{2}}(x)-x_1^{\frac{1}{2}}\right]\\
&&+\left(\theta+\frac{d+1}{2}\right)\frac{x_1(x_1^{-\frac{3}{2}}-\sum_{k=1}^{d+1}x_k^{-\frac{1}{2}})}{\sum_{k=1}^{d+1}{x_k}^{-1}}.
\end{eqnarray*}
Then,
\begin{eqnarray}\label{RTY1}
2b^{(d)}_1(x)
=\frac{x_1}{2}\left\{(3+2\theta) \left[H^{(d)}_{\frac{3}{2}}(x)-x_{1}^{\frac{1}{2}}\right]\right\}+\left(\theta+\frac{d+1}{2}\right)\frac{x_1(x_1^{-\frac{3}{2}}-\sum_{k=1}^{d+1}x_k^{-\frac{1}{2}})}{\sum_{k=1}^{d+1}{x_k}^{-1}}.
\end{eqnarray}
Similarly, for $i=2,\dots,d$,
\begin{eqnarray}\label{RTY2}
2b^{(d)}_i(x)
=\frac{x_i}{2}\left\{(3+2\theta) \left[H^{(d)}_{\frac{3}{2}}(x)-x_{i}^{\frac{1}{2}}\right]\right\}+\left(\theta+\frac{d+1}{2}\right)\frac{x_i(x_i^{-\frac{3}{2}}-\sum_{k=1}^{d+1}x_k^{-\frac{1}{2}})}{\sum_{k=1}^{d+1}{x_k}^{-1}}.
\end{eqnarray}
Thus, for $f\in  C^{\infty}({\Delta_d})$,
\begin{eqnarray}\label{Feb28a}
{\cal L}^{(d)}f(x)&=&
\frac{1}{2}\sum\limits_{i,j=1}^da^{(d)}_{ij}(x)\partial_i\partial_jf(x)+\frac{1}{4}\sum\limits_{i=1}^dx_i\bigg\{(3+2\theta) \left[H^{(d)}_{\frac{3}{2}}(x)-x_{i}^{\frac{1}{2}}\right]\nonumber\\
&&\ \ \ \ +\left(2\theta+d+1\right)\frac{x_i^{-\frac{3}{2}}-\sum_{k=1}^{d+1}x_k^{-\frac{1}{2}}}{\sum_{k=1}^{d+1}{x_k}^{-1}}\bigg\}\partial_if(x).
\end{eqnarray}

Note that  the term
\[
\frac{x_i^{-\frac{3}{2}}-\sum_{k=1}^{d+1}x_k^{-\frac{1}{2}}}{\sum_{k=1}^{d+1}{x_k}^{-1}}\]
is first defined on $\Delta^{\circ}_d$ and then extended to $\Delta_d$ by continuation. It is easily verified that the term becomes zero on $\Delta_d\setminus \Delta^{\circ}_d$.

\subsection{Uniform boundedness of the generator}

In this subsection, we show that the operator ${\cal L}^{(d)}$ given by (\ref{Feb28a}) is the generator of the bilinear form $({\cal E}^{(d)}, C^{\infty}({\Delta_d}))$ defined by (\ref{May6f}). To this end, we first give a uniform estimate for ${\cal L}^{(d)}$, which will play an important role in the next subsection.

\begin{pro}\label{pro3.1}
For $d=1$,
{\small\begin{eqnarray}\label{May6Z7}
&&\int_{\Delta_d}\left(\frac{\sum_{k=1}^{d+1}x_k^{-\frac{1}{2}}}{\sum_{k=1}^{d+1}{x_k}^{-1}}\right)^2\varsigma_d(x_1,\dots, x_d)dx_1\cdots dx_d\nonumber\\
&\le&\frac{1}{3}+\frac{1}{2^{\theta+8}\pi^{\frac{1}{2}}\Gamma(\theta+\frac{1}{2})(\theta+1)(\theta+2)}\left(\int_0^{\infty}u^{-\frac{\theta+5}{2}}e^{-\frac{1}{8u}}du\right)^2.
\end{eqnarray}}
For $d\ge 2$,
{\small\begin{eqnarray}\label{May6Z1}
&&\int_{\Delta_d}\left(\frac{\sum_{k=1}^{d+1}x_k^{-\frac{1}{2}}}{\sum_{k=1}^{d+1}{x_k}^{-1}}\right)^2\varsigma_d(x_1,\dots, x_d)dx_1\cdots dx_d\nonumber\\
&\le&\frac{2}{(d+1)(d+2)}+\frac{d}{2^{\theta+\frac{3}{2}}\pi\Gamma(\theta+\frac{1}{2})(d+1)(\theta+\frac{d+1}{2})(\theta+\frac{d+3}{2})}\int_0^{\infty}u^{-(\theta+\frac{5}{2})}e^{-\frac{1}{18u}}du.
\end{eqnarray}}
\end{pro}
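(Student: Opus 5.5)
We need to bound $E[(S_{1/2}/S_1)^2]$ under $\varsigma_d$, where $S_{1/2}=\sum_{k=1}^{d+1}x_k^{-1/2}$ and $S_1=\sum_{k=1}^{d+1}x_k^{-1}$. The plan is to split the ratio by Cauchy--Schwarz into a bounded piece and a piece we control by Gaussian/Laplace integration, using the integral representation
\[
\Big(\sum_k \tfrac{p_k^2}{x_k}\Big)^{-(\theta+\frac{d+1}{2})}=\frac{1}{\Gamma(\theta+\frac{d+1}{2})}\int_0^\infty t^{\theta+\frac{d-1}{2}}e^{-t\sum_k p_k^2/x_k}\,dt .
\]
This turns $\varsigma_d$ into a mixture over $t$ of product-form densities in the $x_k$ with factors $x_k^{-3/2}e^{-tp_k^2/x_k}$. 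These are stable-$\frac12$/inverse-Gaussian type kernels, restricted to the simplex.

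First I would write, by Cauchy--Schwarz, $S_{1/2}^2\le (d+1)S_1$, so that $(S_{1/2}/S_1)^2\le (d+1)/S_1$. Next I would split according to whether some coordinate is large. On the simplex, $S_1\ge (d+1)^2$ always (harmonic--arithmetic mean), which gives the crude bound $1/(d+1)$; this is not sharp enough. Instead I would split more finely, using $S_{1/2}^2=S_1+\sum_{k\ne l}(x_kx_l)^{-1/2}$. Then
\[
(S_{1/2}/S_1)^2=\frac{1}{S_1}+\frac{\sum_{k\neq l}(x_kx_l)^{-1/2}}{S_1^2}.
\]
I expect the term $E[1/S_1]$ to produce the explicit rational constant: $1/3$ for $d=1$, and $2/((d+1)(d+2))$ in general. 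The reason is that $1/S_1\le \sum_k x_k^2\cdot(\ldots)$ is controlled by $E[\sum x_k^2]$-type moments; for instance $E[\sum_k x_k^2]$ under equal $p_k$ relates to moments of $\Pi_{1/2,\theta,\nu_0}$ via Proposition~\ref{proMay14a}. Concretely, I would use $1/S_1\le \min_k x_k$ together with some symmetric moment bound, and compute it using the Dirichlet-type density, aiming at exactly these constants.

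For the cross term I would insert the Laplace representation $S_1^{-2}=\int_0^\infty s\,e^{-sS_1}\,ds$. Combined with the $t$-representation of the density and $p_k=1/(d+1)$, the integrand over $x$ factorizes, apart from the simplex constraint, into products of $x_k^{-3/2}e^{-(s+tp_k^2)/x_k}$, together with two factors carrying the extra $x_k^{-1/2}$. I would then relax the constraint $\sum x_k=1$ by a substitution: after scaling, the exponent variable $u$ arises and yields integrals $\int_0^\infty u^{-(\theta+5/2)}e^{-c/u}\,du$ with $c=1/8$ or $1/18$, coming from the minimizing configurations (weights $1/2$ or $1/3$ squared and halved). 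The Gamma and $\pi$ normalizations come from the constant in \eqref{May6Q1}.

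The case $d=1$ is done separately and explicitly: it is a one-variable integral in $x_1\in(0,1)$, and squaring a single Laplace integral gives the $(\int\cdot)^2$ form. I expect the main obstacle to be the cross-term estimate: handling the simplex constraint while keeping $d$-dependence explicit, and extracting exactly the constants $1/18$ and $2^{\theta+3/2}$. My first attempt would be to bound $e^{-sS_1}$ by dropping all but three coordinates, using symmetry to reduce to $x_1,x_2$ and a remainder, and then integrating out via the Brownian excursion (stable-$\frac12$) identity.
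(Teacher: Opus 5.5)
Your splitting $(S_{1/2}/S_1)^2=S_1^{-1}+\sum_{k\neq l}(x_kx_l)^{-1/2}S_1^{-2}$ is exactly the paper's. The tools you list for the cross term are also the right ones: a Gamma/Laplace representation, a scaling that removes the simplex constraint, the stable-$\frac12$ convolution identity, and a separate computation for $d=1$.

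\textbf{Diagonal term.} Here your plan goes wrong. The constant $\frac{2}{(d+1)(d+2)}=1/\sum_{k=1}^{d+1}k$ does not come from a moment. The paper uses the pointwise bound $S_1\ge\sum_{k=1}^{d+1}k$, since the $k$-th largest coordinate is at most $1/k$, so nothing has to be integrated against $\varsigma_d$. Your own harmonic--arithmetic mean bound $S_1\ge(d+1)^2$ does even better, because $(d+1)^{-2}\le\frac{2}{(d+1)(d+2)}$ (for $d=1$, $\frac14\le\frac13$). You discarded it because, combined with Cauchy--Schwarz on the whole integrand, it only gives $\frac{1}{d+1}$. On the diagonal piece alone it is exactly what is needed.

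The route you propose instead fails. The bound $S_1^{-1}\le\min_k x_k$ gives away a factor up to $d+1$: at the barycentre $\min_k x_k=\frac{1}{d+1}$ while $S_1^{-1}=\frac{1}{(d+1)^2}$. Moments of the type $E[\sum_k x_k^2]$ from Proposition \ref{proMay14a} are of order one, so the stated constant cannot be extracted from them.

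\textbf{Cross term.} This step is only a sketch, and its one concrete suggestion would not give the stated bound. That suggestion is to weaken $e^{-sS_1}$ by discarding coordinates. The paper's computation runs as follows.
\begin{itemize}
\item By exchangeability ($p_k=\frac{1}{d+1}$ for all $k$), the cross sum contributes $d(d+1)$ times the $(1,2)$ term.
\item Since $S_1=(d+1)^2b$ with $b=\sum_k p_k^2/x_k$, the factor $S_1^{-2}$ merges with the density's $b^{-(\theta+\frac{d+1}{2})}$ into a single power $b^{-a}$, $a=\theta+\frac{d+5}{2}$. No second Laplace variable is needed. This merging yields the factor $\Gamma(\theta+\frac{d+1}{2})/\Gamma(\theta+\frac{d+5}{2})=[(\theta+\frac{d+1}{2})(\theta+\frac{d+3}{2})]^{-1}$.
\item Write $\Gamma(a)(b/2)^{-a}=\int_0^\infty\sigma^{-a-1}e^{-b/(2\sigma)}d\sigma$ and substitute $s_j=\sigma x_j$. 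This maps $S_{d+1}\times(0,\infty)$ onto $(0,\infty)^{d+1}$ and leaves the integrand $\sigma^{-(\theta+1)}s_1^{-2}s_2^{-2}\prod_{j\ge3}s_j^{-3/2}\prod_j e^{-p_j^2/(2s_j)}$, with $\sigma=\sum_j s_j$.
\item The variables $s_3,\dots,s_{d+1}$ are then integrated out exactly, as a sum of independent inverse Gaussians. This leaves one variable $u$ with parameter $1-p_1-p_2=\frac{d-1}{d+1}$.
\item Only after that come the crude bounds: $(s_1+s_2+u)^{-(\theta+1)}\le u^{-(\theta+1)}$, $\frac{d-1}{d+1}\in[\frac13,1)$, and $\int_0^\infty s^{-2}e^{-p^2/(2s)}ds=2/p^2$. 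The bound $\frac{d-1}{d+1}\ge\frac13$ is the source of $\frac{1}{18}$, not a minimizing configuration.
\item For $d=1$ there is no $u$. Here $p_1=p_2=\frac12$ gives $\frac18$, and $(s_1+s_2)^{-(\theta+1)}\le(s_1s_2)^{-(\theta+1)/2}$ produces the square.
\end{itemize}
If you weaken $S_1$ to a partial sum before integrating, this merging is lost. It is precisely the Gamma ratio above that gives the right-hand side its $d^{-2}$ decay; all other powers of $d+1$ cancel.
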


\noindent {\bf Proof.}\ \ We have
\begin{eqnarray*}
&&\int_{\Delta_d}\left(\frac{\sum_{k=1}^{d+1}x_k^{-\frac{1}{2}}}{\sum_{k=1}^{d+1}{x_k}^{-1}}\right)^2\varsigma_d(x_1,\dots, x_d)dx_1\cdots dx_d\nonumber\\
&=&\int_{\Delta_d}\left[\frac{1}{\sum_{k=1}^{d+1}{x_k}^{-1}}+\frac{ \sum_{k\not= l} {x_k^{-\frac{1}{2}}x_l^{-\frac{1}{2}} }}{(\sum_{k=1}^{d+1}{x_k}^{-1})^2}\right]\varsigma_d(x_1,\dots, x_d)dx_1\cdots dx_d
\end{eqnarray*}
\begin{eqnarray}\label{May6Z2}
&\le&\frac{1}{\sum_{k=1}^{d+1}k}+d(d+1)\int_{\Delta_d}\frac{ {x_1^{-\frac{1}{2}}x_2^{-\frac{1}{2}} }}{(\sum_{k=1}^{d+1}{x_k}^{-1})^2}\varsigma_d(x_1,\dots, x_d)dx_1\cdots dx_d\nonumber\\
&=&\frac{2}{(d+1)(d+2)}+d(d+1)\int_{\Delta_d}\frac{ {x_1^{-\frac{1}{2}}x_2^{-\frac{1}{2}} }}{(\sum_{k=1}^{d+1}{x_k}^{-1})^2}\varsigma_d(x_1,\dots, x_d)dx_1\cdots dx_d.
\end{eqnarray}

Denote $
S_{d+1}=\{(x_1,\dots,
x_{d+1})\in \mathbb{R}^{d+1}:x_i\ge 0\ {\rm and}\
\sum_{i=1}^{d+1}x_i= 1\}.
$
We will follow the argument of \cite[Proof
of Lemma 3.1]{C}. Note that
\begin{eqnarray}\label{June2a}
\int_0^{\infty}\sigma^{-a-1}e^{-\frac{b}{\sigma}}d\sigma&=&\int_0^{\infty}u^{a+1}e^{-bu}u^{-2}du\nonumber\\
&=&\int_0^{\infty}u^{a-1}e^{-bu}du\nonumber\\
&=&\Gamma(a)b^{-a}.
\end{eqnarray}
Set
\begin{eqnarray}\label{June2b}
a=\theta+\frac{d+5}{2},\ \ \ \ b=\sum_{i=1}^{d+1}\frac{p_i^2}{x_i}.
\end{eqnarray}

\noindent (i) Assume that $d\ge2$. By (\ref{June2a}) and (\ref{June2b}), we get
{\small\begin{eqnarray*}
&&\int_{\Delta_d}\frac{x_1^{-\frac{1}{2}}x_2^{-\frac{1}{2}}}{\left(\frac{p_1^2}{x_1}+\cdots+
\frac{p_d^2}{x_{d}}+\frac{p^2_{d+1}}{1-x_1-\cdots-x_{d}}\right)^2}\varsigma_d(x_1,\dots, x_d)dx_1\cdots dx_d\nonumber\\
&=&\frac{\Gamma(\theta+\frac{d+1}{2})p_1\cdots
p_{d+1}}{\pi^{\frac{d}{2}}\Gamma(\theta+\frac{1}{2})}\int_{S_{d+1}}\frac{x_1^{-2}x_2^{-2}x_3^{-\frac{3}{2}}\cdots
x_{d+1}^{-\frac{3}{2}}}{b^{a}}dx_1\cdots dx_{d+1}\nonumber\\
&=&\frac{\Gamma(\theta+\frac{d+1}{2})p_1\cdots
p_{d+1}}{\pi^{\frac{d}{2}}\Gamma(\theta+\frac{1}{2})\Gamma(a)2^{a}}\int_{S_{d+1}}x_1^{-2}x_2^{-2}x_3^{-\frac{3}{2}}\cdots
x_{d+1}^{-\frac{3}{2}}\Gamma(a)\left(\frac{b}{2}\right)^{-a}dx_1\cdots dx_{d+1}\nonumber\\
&=&\frac{\Gamma(\theta+\frac{d+1}{2})p_1\cdots
p_{d+1}}{\pi^{\frac{d}{2}}\Gamma(\theta+\frac{1}{2})\Gamma(a)2^{a}}\int_{S_{d+1}}\int_0^{\infty}x_1^{-2}x_2^{-2}x_3^{-\frac{3}{2}}\cdots
x_{d+1}^{-\frac{3}{2}}\sigma^{-a-1}e^{-\frac{b}{2\sigma}}d\sigma dx_1\cdots dx_{d+1}\nonumber\\
&=&\frac{\Gamma(\theta+\frac{d+1}{2})p_1\cdots
p_{d+1}}{\pi^{\frac{d}{2}}\Gamma(\theta+\frac{1}{2})\Gamma(a)2^{a}}\int_{0}^{\infty}\cdots \int_{0}^{\infty}(\sigma^{-1}s_1)^{-2}(\sigma^{-1}s_2)^{-2}(\sigma^{-1}s_3)^{-\frac{3}{2}}\cdots
(\sigma^{-1}s_{d+1})^{-\frac{3}{2}}\nonumber\\
&&\ \ \ \ \cdot\sigma^{-a-1}e^{-\frac{b}{2\sigma}}\sigma^{-d}ds_1\cdots ds_{d+1}\ \ \ ({\rm by\ letting}\ s_j=\sigma x_j\ {\rm with}\ \sigma=s_1+\cdots+s_{d+1})\nonumber \\
&=&\frac{\Gamma(\theta+\frac{d+1}{2})p_1\cdots
p_{d+1}}{\pi^{\frac{d}{2}}\Gamma(\theta+\frac{1}{2})\Gamma(a)2^{a}}\int_{0}^{\infty}\cdots \int_{0}^{\infty}\sigma^{-(\theta+1)}s_1^{-2}s_2^{-2}\prod_{j>2}^{d+1}s_j^{-\frac{3}{2}}\prod_{j=1}^{d+1}e^{-\frac{p_j^2}{2s_j}}ds_1\cdots ds_{d+1}\\
&=&\frac{\Gamma(\theta+\frac{d+1}{2})p_1\cdots
p_{d+1}}{\pi^{\frac{d}{2}}\Gamma(\theta+\frac{1}{2})\Gamma(a)2^{a}}\int_0^{\infty}\int_0^{\infty}\Bigg[\int_0^{\infty}\cdots\int_0^{\infty}\sigma^{-(\theta+1)}\prod_{j>2}^{d+1}\left(s_j^{-\frac{3}{2}}e^{-\frac{p_j^2}{2s_j}}\right)ds_3\cdots ds_{d+1}\Bigg]\nonumber\\
& &\ \ \ \ \cdot
s_1^{-2}s_2^{-2}e^{-\frac{p_1^2}{2s_1}}e^{-\frac{p_2^2}{2s_2}}ds_1ds_2
\end{eqnarray*}}

{\small\begin{eqnarray}\label{May6Z4}
&=&\frac{p_1p_2}{2^{\theta+3}\pi^{\frac{1}{2}}\Gamma(\theta+\frac{1}{2})(\theta+\frac{d+1}{2})(\theta+\frac{d+3}{2})}\int_0^{\infty}\int_0^{\infty}\Bigg[\int_0^{\infty}(s_1+s_2+u)^{-(\theta+1)}\nonumber\\
& &\ \ \ \ \cdot
\frac{1-p_1-p_2}{\sqrt{2\pi}}e^{-\frac{(1-p_1-p_2)^2}{2u}}u^{-\frac{3}{2}}du\Bigg]s_1^{-2}s_2^{-2}e^{-\frac{p_1^2}{2s_1}}e^{-\frac{p_2^2}{2s_2}}ds_1ds_2.
\end{eqnarray}}

\noindent In the last equality, we have used the  fact that the sum of independent stable random variables with index $\frac{1}{2}$ and scale parameters $p_3,\dots,p_{d+1}$ is a stable random variable with index $\frac{1}{2}$ and scale parameter $p_3+\dots+p_{d+1}$. Thus,
{\small\begin{eqnarray}\label{May6Z3}
&&\int_{\Delta_d}\frac{x_1^{-\frac{1}{2}}x_2^{-\frac{1}{2}}}{\left(\frac{1}{x_1}+\cdots+
\frac{1}{x_{d}}+\frac{1}{1-x_1-\cdots-x_{d}}\right)^2}\varsigma_d(x_1,\dots, x_d)dx_1\cdots dx_d\nonumber\\
&\le&\frac{1}{2^{\theta+3}\pi^{\frac{1}{2}}\Gamma(\theta+\frac{1}{2})(d+1)^6(\theta+\frac{d+1}{2})(\theta+\frac{d+3}{2})}\int_0^{\infty}\int_0^{\infty}\Bigg[\int_0^{\infty}(s_1+s_2+u)^{-(\theta+1)}\nonumber\\
& &\ \ \ \ \cdot
\frac{1}{\sqrt{2\pi}}e^{-\frac{1}{18u}}u^{-\frac{3}{2}}du\Bigg]s_1^{-2}s_2^{-2}e^{-\frac{p_1^2}{2s_1}}e^{-\frac{p_2^2}{2s_2}}ds_1ds_2.
\end{eqnarray}}

Note that
{\small\begin{eqnarray*}
&&\int_0^{\infty}\int_0^{\infty}\Bigg[\int_0^{\infty}(s_1+s_2+u)^{-(\theta+1)}\frac{1}{\sqrt{2\pi}}e^{-\frac{1}{18u}}u^{-\frac{3}{2}}du\Bigg]s_1^{-2}s_2^{-2}e^{-\frac{p_1^2}{2s_1}}e^{-\frac{p_2^2}{2s_2}}ds_1ds_2\\
&\le&\frac{1}{\sqrt{2\pi}}\int_0^{\infty}u^{-(\theta+\frac{5}{2})}e^{-\frac{1}{18u}}du\left(\int_0^{\infty}s^{-2}e^{-\frac{1}{2(d+1)^2s}}ds\right)^2\\
&=&\frac{1}{\sqrt{2\pi}}\int_0^{\infty}u^{-(\theta+\frac{5}{2})}e^{-\frac{1}{18u}}du\left(\int_0^{\infty}e^{-\frac{t}{2(d+1)^2}}dt\right)^2\\
&=&\frac{4(d+1)^4}{\sqrt{2\pi}}\int_0^{\infty}u^{-(\theta+\frac{5}{2})}e^{-\frac{1}{18u}}du.
\end{eqnarray*}}

\noindent Then, we obtain (\ref{May6Z1}) by (\ref{May6Z2}) and (\ref{May6Z3}).

\noindent (ii) Assume that $d=1$. Similar to (\ref{May6Z4}), we get
\begin{eqnarray*}
&&\int_{\Delta_1}\frac{x^{-\frac{1}{2}}(1-x)^{-\frac{1}{2}}}{\left(\frac{1}{x}+
\frac{1}{1-x}\right)^2}\varsigma_1(x)dx\nonumber\\
&=&\frac{1}{2^{\theta+9}\pi^{\frac{1}{2}}\Gamma(\theta+\frac{1}{2})(\theta+1)(\theta+2)}\int_{0}^{\infty} \int_{0}^{\infty}(s_1+s_2)^{-(\theta+1)}s_1^{-2}s_2^{-2}e^{-\frac{1}{8s_1}}e^{-\frac{1}{8s_2}}ds_1ds_2\\
&\le&\frac{1}{2^{\theta+9}\pi^{\frac{1}{2}}\Gamma(\theta+\frac{1}{2})(\theta+1)(\theta+2)}\left(\int_0^{\infty}u^{-\frac{\theta+5}{2}}e^{-\frac{1}{8u}}du\right)^2,
\end{eqnarray*}
which together with (\ref{May6Z2}) implies (\ref{May6Z7}).
\hfill $\Box$

\vskip 0.3cm

For $x=(x_1,\dots,
x_d)\in \Delta_d$ and $1\le j\le d$,  define
$$
V_j(x)=\left(\sum_{i=1}^da^{(d)}_{ij}(x)\partial_if(x)\right)\varsigma_d(x)g(x),
$$
and $$V=(V_1,\dots,V_d).$$ Denote by $\partial\Delta_d$ the
boundary of $\Delta_d$, $\mathbf{n}$ the outward pointing unit
normal field of $\partial\Delta_d$, and $d{\cal S}_d$ the induced
volume form on the surface $\partial\Delta_d$.

For the face
$\{x=(x_1,\dots, x_d)\in \Delta_d:x_j=0\}$, $1\le j\le d$, we have
{\small\begin{eqnarray*}
V\cdot \mathbf{n}&=&V_j\\
&=&\bigg(\bigg[(1-2x_j)x_j^{\frac{3}{2}}+x_j^2H^{(d)}_{\frac{3}{2}}(x)\bigg]\partial_jf(x)+\sum_{i:\,i\not=
j}x_ix_j\bigg[H^{(d)}_{\frac{3}{2}}(x)-x_i^{\frac{1}{2}}-x_j^{\frac{1}{2}}\bigg]\partial_if(x)\bigg)\varsigma_d(x)g(x)\\
&=&0,
\end{eqnarray*}}

\noindent and for the face $\{x=(x_1,\dots, x_d)\in
\Delta_d:\sum_{j=1}^dx_j=1\}$, we have
\begin{eqnarray*}
V\cdot \mathbf{n}&=&\frac{1}{\sqrt{d}}\sum_{j=1}^dV_j\\
&=&\frac{1}{\sqrt{d}}\sum_{j=1}^d\Bigg(\left[(1-2x_j)x_j^{\frac{3}{2}}+x_j^2H^{(d)}_{\frac{3}{2}}(x)\right]\partial_jf(x)\\
&&\ \ \ \ +\sum_{i:\,i\not=
j}x_ix_j\left[H^{(d)}_{\frac{3}{2}}(x)-x_i^{\frac{1}{2}}-x_j^{\frac{1}{2}}\right]\partial_if(x)\Bigg)\varsigma_d(x)g(x)\\
&=&\frac{1}{\sqrt{d}}\Bigg(\sum_{j=1}^dx_j^{\frac{3}{2}}\left[1-2x_j+x_j^{\frac{1}{2}}H^{(d)}_{\frac{3}{2}}(x)\right]\partial_jf(x)\\
&&\ \ \ \ +\sum_{i\not=
j}x_ix_j\left[H^{(d)}_{\frac{3}{2}}(x)-x_i^{\frac{1}{2}}-x_j^{\frac{1}{2}}\right]\partial_if(x)\Bigg)\varsigma_d(x)g(x)\\
&=&\frac{1}{\sqrt{d}}\Bigg(\sum_{j=1}^dx_j^{\frac{3}{2}}\left[1-2x_j+x_j^{\frac{1}{2}}H^{(d)}_{\frac{3}{2}}(x)\right]\partial_jf(x)\\
&&\ \ \ \ -\sum_{i=1}^dx_i^{\frac{3}{2}}\left[1-2x_i+x_i^{\frac{1}{2}}H^{(d)}_{\frac{3}{2}}(x)\right]\partial_if(x)\Bigg)\varsigma_d(x)g(x)\\
&=&0.
\end{eqnarray*}
Hence, $V\cdot \mathbf{n}=0$ on $\partial\Delta_d$. Then, we obtain
by the divergence theorem and Proposition \ref{pro3.1} that
\begin{eqnarray*}
&&2\left[{\cal E}^{(d)}(f,g)+\int_{\Delta_d}{\cal L}^{(d)}f(x)g(x)\varsigma_d(x) dx\right]\nonumber\\
&=&\int_{\Delta_d}\sum_{j=1}^d\partial_{j}\left[\left(\sum_{i=1}^da^{(d)}_{ij}(x)\partial_if(x_1,\dots,
x_d)\right)\varsigma_d(x_1,\dots, x_d)g(x_1,\dots,
x_d)\right]dx_1\cdots dx_d
\end{eqnarray*}
\begin{eqnarray*}
\hskip -10.5cm&=&\int_{\Delta_d}{\rm div}Vdx_1\cdots dx_d\nonumber\\
\hskip -10.5cm&=&\int_{\partial\Delta_d}V\cdot \mathbf{n}d{\cal S}_d\nonumber\\
\hskip -10.5cm&=&0.
\end{eqnarray*}
Thus,
$$
{\cal E}^{(d)}(f,g)=-({\cal L}^{(d)}f,g)_{L^2(\Delta_d;\varsigma_d(x)dx)},\ \ \ \ f,g\in  C^{\infty}({\Delta_d}).
$$
Therefore, $({\cal E}^{(d)},C^{\infty}({\Delta_d}))$ is closable on $L^2({\Delta_d};\varsigma_d(x)dx)$ and its closure $({\cal E}^{(d)},D({\cal E}^{(d)}))$ is a regular Dirichlet form. There exists a time-reversible, conservative, diffusion process $(\{X^{(d)}_t\}_{t\ge 0},\{P^{(d)}_x\}_{x\in {\Delta_d}})$ associated with $({\cal E}^{(d)},D({\cal E}^{(d)}))$, whose stationary distribution  is $\varsigma_d(x)dx$.

\subsection{Convergence of the finite dimensional model}

Let $d\in\mathbb{N}$ and $i_1,\dots,i_{n_d}$ be arbitrary $n_d$ indices  satisfying $1\le i_1<i_2<\cdots <i_{n_d}\le d$. Define
$$h_d(x)=\sum_{j=1}^{n_d}x_{i_j},\ \ \ \ x=(x_1,\dots,x_d)\in\mathbb{R}^d.$$
By (\ref{Feb28a}), we get
\begin{eqnarray*}
{\cal L}^{(d)}h_d(x)=\frac{1}{4}\sum_{j=1}^{n_d}x_{i_j}\Bigg\{(3+2\theta) \left[H^{(d)}_{\frac{3}{2}}(x)-x_{i_j}^{\frac{1}{2}}\right]+\left(2\theta+d+1\right)\frac{(x_{i_j}^{-\frac{3}{2}}-\sum_{k=1}^{d+1}x_k^{-\frac{1}{2}})}{\sum_{k=1}^{d+1}{x_k}^{-1}}\Bigg\}.
\end{eqnarray*}
Define
$$
g_d(x)=\frac{2\theta+d+1}{4}\sum_{j=1}^{n_d}\frac{x_{i_j}^{-\frac{1}{2}}-x_{i_j}\sum_{k=1}^{d+1}x_k^{-\frac{1}{2}}}{\sum_{k=1}^{d+1}{x_k}^{-1}}.
$$
Then,
$$
{\cal L}^{(d)}h_d(x)=\frac{3+2\theta}{4}\sum_{j=1}^{n_d}x_{i_j}\left[H^{(d)}_{\frac{3}{2}}(x)-x_{i_j}^{\frac{1}{2}}\right]+g_d(x).
$$

We have
\begin{eqnarray*}
|g_d(x)|\le\frac{(2\theta+d+1)\sum_{k=1}^{d+1}x_k^{-\frac{1}{2}}}{4\sum_{k=1}^{d+1}{x_k}^{-1}}.
\end{eqnarray*}
Then,
\begin{eqnarray*}
&&\int_{\Delta_d}|g_d(x)|^2\varsigma_d(x_1,\dots, x_d)dx_1\cdots dx_d\nonumber\\
&\le& \frac{(2\theta+d+1)^2}{16}\int_{\Delta_d}\left(\frac{\sum_{k=1}^{d+1}x_k^{-\frac{1}{2}}}{\sum_{k=1}^{d+1}{x_k}^{-1}}\right)^2\varsigma_d(x_1,\dots, x_d)dx_1\cdots dx_d.
\end{eqnarray*}
Thus, by Proposition \ref{pro3.1}, we obtain
$$
\sup_{d\in\mathbb{N}}\left\{\int_{\Delta_d}|g_d(x)|^2\varsigma_d(x_1,\dots, x_d)dx_1\cdots dx_d\right\}<\infty.
$$
Therefore,
\begin{eqnarray}\label{May4v}
\sup_{d\in \mathbb{N}}\left\{\int_{\Delta_d}|{\cal L}^{(d)}h_d(x)|^2\varsigma_d(x_1,\dots, x_d)dx_1\cdots dx_d\right\}<\infty.
\end{eqnarray}

Let $({\cal E}, D({\cal E}))$  be the Dirichlet form in Theorem \ref{thm3.1} and $(\{X_t\}_{t\ge 0},\{P_\mu\}_{\mu\in {\cal P}_1(S)})$ be its associated Markov process. We fix a sequence
$\{(B^k_1,\dots,B^k_{2^k})\}_{k=1}^{\infty}$ of measurable partitions of $S$
satisfying the following conditions:

\noindent (i) $\nu_0(B^k_j)=1/2^k$, $1\le j\le 2^k$, $k\in\mathbb{N}$.

\noindent (ii) $B^k_j=B^{k+1}_{2j-1}\cup B^{k+1}_{2j}$, $1\le j\le 2^k$,
$k\in\mathbb{N}$.

\noindent Define
\begin{eqnarray*}
\Gamma_k: {\cal
P}_1(S)\mapsto\Delta_{2^k-1},\ \ \ \ \mu\mapsto\Gamma_k(\mu)=(\mu(B^k_1),\dots,\mu(B^k_{2^k-1})),
\end{eqnarray*}
$$
D({\tilde {\cal E}}^{(2^k-1)})=\left\{f\circ \Gamma_k: f\in D({\cal E}^{(2^k-1)})\right\},
$$
and
\begin{eqnarray*}
{\tilde {\cal E}}^{(2^k-1)}(f\circ \Gamma_k,g\circ \Gamma_k)={\cal E}^{(2^k-1)}(f,g),\ \ \ \ f,g\in D({\cal E}^{(2^k-1)}).
\end{eqnarray*}
Then, $({\tilde {\cal E}}^{(2^k-1)}, D({\tilde {\cal E}}^{(2^k-1)}))$ is a closed symmetric form on $L^2({\cal P}_1(S);\Pi_{\alpha,\theta,\nu_0})$, which is obtained by lifting the Dirichlet form $({\cal E}^{(2^k-1)},D({\cal E}^{(2^k-1)}))$ from $L^2({\Delta_{2^k-1}};\varsigma_{2^k-1}(x)dx)$ to $L^2({\cal P}_1(S);\Pi_{\alpha,\theta,\nu_0})$. Define
$$
{\tilde X}^{(2^k-1)}_t=\sum_{j=1}^{2^k-1}\{X^{(2^k-1)}_t\}_j\delta_{b^k_j}+\left[1-\sum_{j=1}^{2^k-1}\{X^{(2^k-1)}_t\}_j\right]\delta_{b^k_{2^k}}.
$$
Then,  $(\{{\tilde X}^{(2^k-1)}_t\}_{t\ge 0}, \{P^{(2^k-1)}_{\Gamma_k(\mu)}\}_{\mu\in{\cal P}_1(S)})$ is the Markov process associated with $({\tilde {\cal E}}^{(2^k-1)}, D({\tilde {\cal E}}^{(2^k-1)}))$.

Let $f\in C^{\infty}(\mathbb{R}^{2^{k_0}-1})$ for some $k_0\in\mathbb{N}$. For  $k\ge k_0$, define $f^{(k)}\in C(\mathbb{R}^{2^{k}-1})$ by
$$
f^{(k)}(x_1,\dots,x_{2^k-1})=f\left(\sum_{j=1}^{2^{k-k_0}}x_j,\sum_{j=2^{k-k_0}+1}^{2^{k-k_0+1}}x_j,\dots,\sum_{j=2^k-2^{k-k_0+1}+1}^{2^k-2^{k-k_0}}x_j\right).
$$
Then,
$$
f\circ \Gamma_{k_0}(\mu)=f^{(k)}\circ \Gamma_k(\mu),\ \ \ \ \mu\in {\cal P}_1(S).
$$
Define
\begin{eqnarray*}
\Psi_k: {\cal
P}_1(S)\mapsto{\cal
P}_1(S),\ \ \ \ \mu\mapsto\Psi_k(\mu)=\sum_{j=1}^{2^k}\mu(B^k_j)\delta_{b^k_j}.
\end{eqnarray*}
Suppose that $f\in C^{\infty}(\mathbb{R}^{2^{k_0}-1})$ and  $g\in C^{\infty}(\mathbb{R}^{2^{k_1}-1})$ for some $k_0,k_1\in\mathbb{N}$.  Then, for any $k\ge \max\{k_0,k_1\}$, we have
$$
{\tilde {\cal E}}^{(2^k-1)}(f^{(k)}\circ \Gamma_k,g^{(k)}\circ \Gamma_k)=\frac{1}{2}\int_{{\cal P}_1(S)}[ \nabla
(f\circ \Gamma_{k_0})(\mu),\nabla (g\circ \Gamma_{k_1})(\mu)]_{\Psi_k(\mu)}
\Pi_{\alpha,\theta,\nu_0}(d\mu).
$$
Thus,
\begin{eqnarray*}
-({\cal L}(f\circ \Gamma_{k_0}),g\circ \Gamma_{k_1})_{L^2({\cal P}_1(S);\Pi_{\alpha,\theta,\nu_0})}&=&{\cal E}(f\circ \Gamma_{k_0}, g\circ \Gamma_{k_1})\\
&=&\lim_{k\rightarrow\infty}{\tilde {\cal E}}^{(2^k-1)}(f^{(k)}\circ \Gamma_k,g^{(k)}\circ \Gamma_k)\\
&=&\lim_{k\rightarrow\infty}-({\cal L}^{(2^k-1)}f^{(k)},g^{(k)})_{L^2(\Delta_{2^k-1};\varsigma_{2^k-1}(x)dx)}\\
&=&\lim_{k\rightarrow\infty}-(({\cal L}^{(2^k-1)}f^{(k)})\circ \Gamma_k,g\circ \Gamma_{k_1})_{L^2({\cal P}_1(S);\Pi_{\alpha,\theta,\nu_0})}.
\end{eqnarray*}
By the representation (\ref{Feb28a}) of the operator ${\cal L}^{(d)}$, the uniform estimate (\ref{May4v}) and noticing the arbitrariness of functions $\{h_d\}$, we obtain 
\begin{eqnarray}\label{May6L3}
\sup_{k\in\mathbb{N}}\left\{\left\|({\cal L}^{(2^k-1)}f^{(k)})\circ \Gamma_k\right\|_{L^2({\cal P}_1(S);\Pi_{\alpha,\theta,\nu_0})}\right\}<\infty.
\end{eqnarray}
Since $\{g\circ \Gamma_{k_1}:g\in C^{\infty}(\mathbb{R}^{2^{k_1}-1}),k_1\in\mathbb{N}\}$ is a dense subset of $L^2({\cal P}_1(S);\Pi_{\alpha,\theta,\nu_0})$, by (\ref{May6L3}), we get
\begin{eqnarray}\label{May4df}
({\cal L}^{(2^k-1)}f^{(k)})\circ \Gamma_k\ {\rm converges\ weakly\ to}\ {\cal L}(f\circ \Gamma_{k_0})\ {\rm in}\  L^2({\cal P}_1(S);\Pi_{\alpha,\theta,\nu_0})\ {\rm as}\ k\rightarrow\infty.
\end{eqnarray}

Suppose that $g_0\in C^{\infty}(\mathbb{R}^{2^{k_0}-1})$ for some $k_0\in\mathbb{N}$. Then, we have
\begin{eqnarray}\label{May6L1}
{\cal E}(g_0\circ \Gamma_{k_0},g_0\circ \Gamma_{k_0})=\lim_{k\rightarrow\infty}{\tilde {\cal E}}^{(2^k-1)}(g_0\circ \Gamma_{k_0},g_0\circ \Gamma_{k_0}).
\end{eqnarray}
Let  $g_k\in C^{\infty}(\mathbb{R}^{2^{k}-1})$, $k\ge 1$, satisfying $\lim_{k\rightarrow\infty}g_k\circ \Gamma_k=g_0$ in $L^2({\cal P}_1(S);\Pi_{\alpha,\theta,\nu_0})$. By (\ref{May4df}), we get
{\small\begin{eqnarray*}
{\cal E}(g_0\circ \Gamma_{k_0},g_0\circ \Gamma_{k_0})
&=&-({\cal L}(g_0\circ \Gamma_{k_0}),g_0\circ \Gamma_{k_0})_{L^2({\cal P}_1(S);\Pi_{\alpha,\theta,\nu_0})}\\
&=&\lim_{k\rightarrow\infty}-({\cal L}^{(2^k-1)}(g_0\circ \Gamma_{k_0}),g_0\circ \Gamma_{k_0})_{L^2({\cal P}_1(S);\Pi_{\alpha,\theta,\nu_0})}\\
&=&\lim_{k\rightarrow\infty}-({\cal L}^{(2^k-1)}(g_0\circ \Gamma_{k_0}),g_k\circ \Gamma_k)_{L^2({\cal P}_1(S);\Pi_{\alpha,\theta,\nu_0})}\\
&=&\lim_{k\rightarrow\infty}{\tilde {\cal E}}^{(2^k-1)}(g_0\circ \Gamma_{k_0},g_k\circ \Gamma_k)\\
&\le&\liminf_{k\rightarrow\infty}\Bigg\{\left[{\tilde {\cal E}}^{(2^k-1)}(g_0\circ \Gamma_{k_0},g_0\circ \Gamma_{k_0})\right]^{\frac{1}{2}}\left[{\tilde {\cal E}}^{(2^k-1)}(g_k\circ \Gamma_k,g_k\circ \Gamma_k)\right]^{\frac{1}{2}}\Bigg\}\\
&=&\left[{\cal E}(g_0\circ \Gamma_{k_0},g_0\circ \Gamma_{k_0})\right]^{\frac{1}{2}}\liminf_{k\rightarrow\infty}\left[{\tilde {\cal E}}^{(2^k-1)}(g_k\circ \Gamma_k,g_k\circ \Gamma_k)\right]^{\frac{1}{2}},
\end{eqnarray*}}

\noindent which implies that
\begin{eqnarray}\label{May6L2}
{\cal E}(g_0\circ \Gamma_{k_0},g_0\circ \Gamma_{k_0})\le \liminf_{k\rightarrow\infty}{\tilde {\cal E}}^{(2^k-1)}(g_k\circ \Gamma_k,g_k\circ \Gamma_k).
\end{eqnarray}
By (\ref{May6L1}) and (\ref{May6L2}), we deduce that $({\tilde {\cal E}}^{(2^k-1)}, D({\tilde {\cal E}}^{(2^k-1)}))$ converges to $({\cal E}, D({\cal E}))$ in the Mosco convergence, which is equivalent  to convergence of their semigroups by \cite[Corollary 2.6.1]{M}. Define
$$
P^{(2^k-1)}_{\Pi_{\alpha,\theta,\nu_0}}(\cdot)=\int_{{\cal P}_1(S)}P^{(2^k-1)}_{\Gamma_k(\mu)}(\cdot)\Pi_{\alpha,\theta,\nu_0}(d\mu),\ \ \ \ k\in\mathbb{N},
$$
and
$$
P_{\Pi_{\alpha,\theta,\nu_0}}(\cdot)=\int_{{\cal P}_1(S)}P_{\mu}(\cdot)\Pi_{\alpha,\theta,\nu_0}(d\mu).
$$
Therefore, we have proved the following result.
\begin{pro}
Any finite dimensional distribution of the Markov process $\{{\tilde X}^{(2^k-1)}_t\}_{t\ge 0}$ under the probability measure $P^{(2^k-1)}_{\Pi_{\alpha,\theta,\nu_0}}(\cdot)$ converges to that of $\{X_t\}_{t\ge 0}$ under the probability measure $P_{\Pi_{\alpha,\theta,\nu_0}}(\cdot)$ as $k\rightarrow\infty$.
\end{pro}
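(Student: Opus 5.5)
The plan is to derive the finite-dimensional convergence from Mosco convergence of the lifted forms $({\tilde {\cal E}}^{(2^k-1)}, D({\tilde {\cal E}}^{(2^k-1)}))$ to $({\cal E}, D({\cal E}))$, obtained just above from (\ref{May6L1}) and (\ref{May6L2}). I would first check that Mosco convergence really holds in the sense of \cite[Corollary 2.6.1]{M}. There are two conditions. The first is the $\liminf$ inequality along $L^2$-weakly convergent sequences. The second is the recovery-sequence condition. I would verify the $\liminf$ inequality on the dense class of cylinder functions $g_0\circ\Gamma_{k_0}$, using the weak convergence (\ref{May4df}) and the Cauchy--Schwarz argument leading to (\ref{May6L2}). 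I would then extend it to general $u\in L^2({\cal P}_1(S);\Pi_{\alpha,\theta,\nu_0})$ by the usual lower semicontinuity and approximation argument. For the recovery sequence I would take the constant sequence $g_0\circ\Gamma_{k_0}$ and use (\ref{May6L1}), followed by a diagonal argument for elements of $D({\cal E})$. All forms live on the same Hilbert space $L^2({\cal P}_1(S);\Pi_{\alpha,\theta,\nu_0})$, because $\Gamma_k$ pushes $\Pi_{\alpha,\theta,\nu_0}$ forward to $\varsigma_{2^k-1}(x)dx$ by \cite[Theorem 3.1]{C}. So Mosco's theorem yields strong convergence of the semigroups: ${\tilde T}^{(k)}_t h\to T_t h$ in $L^2$ for every $h$ and $t\ge0$.

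Next I would identify finite dimensional distributions with iterated semigroup expressions. For $0\le t_1<\cdots<t_m$ and bounded measurable $h_1,\dots,h_m$ on ${\cal P}_1(S)$, stationarity of $\Pi_{\alpha,\theta,\nu_0}$ and the Markov property give
\[
E^{(2^k-1)}_{\Pi_{\alpha,\theta,\nu_0}}\Big[\prod_{i=1}^m h_i({\tilde X}^{(2^k-1)}_{t_i})\Big]=\int {\tilde T}^{(k)}_{t_1}\big(h_1{\tilde T}^{(k)}_{t_2-t_1}(h_2\cdots {\tilde T}^{(k)}_{t_m-t_{m-1}}h_m)\big)\,d\Pi_{\alpha,\theta,\nu_0},
\]
and the same identity holds for $X$ with $T_t$. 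One point needs care here. ${\tilde X}^{(2^k-1)}$ takes values in measures supported on the points $b^k_j$. Its semigroup acts on functions of $\Gamma_k(\mu)$, so I would restrict to test functions $h_i$ that are cylinder functions $f\circ\Gamma_{k_0}$ with $f$ bounded continuous. This suffices because such functions determine distributions on ${\cal P}_1(S)$, and for $k\ge k_0$ one has $h_i({\tilde X}^{(2^k-1)}_t)=f(\Gamma_{k_0}\text{-projection})$ by the nesting condition (ii) on the partitions.

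Convergence of the iterated expression then follows by induction on $m$. Each ${\tilde T}^{(k)}_s$ is an $L^2$ contraction, and it is also an $L^\infty$ contraction by the Markov property. Multiplication by a bounded $h_i$ is continuous on $L^2$. Hence if $u_k\to u$ strongly, then ${\tilde T}^{(k)}_s(h_iu_k)-T_s(h_iu)={\tilde T}^{(k)}_s(h_i(u_k-u))+({\tilde T}^{(k)}_s-T_s)(h_iu)\to0$ in $L^2$. The final integral against the probability measure converges as well. This yields convergence of all finite dimensional distributions.

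The main obstacle is the first step: upgrading (\ref{May6L1}) and (\ref{May6L2}), which are proved only for cylinder functions and smooth approximants $g_k$, to the full Mosco conditions. The difficulty is arbitrary weakly convergent sequences and arbitrary limits in $D({\cal E})$. I would handle it using the uniform bound (\ref{May4v}) and (\ref{May6L3}) together with the density of cylinder functions in $D({\cal E})$, which holds because ${\cal F}$ is a core by Theorem \ref{thm3.1}. Alternatively, I would invoke the equivalent criterion of strong resolvent convergence tested on a core, which only requires the generator convergence already available in (\ref{May4df}).
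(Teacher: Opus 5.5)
Your outline is the paper's route. The paper proves (\ref{May6L1}) and (\ref{May6L2}), asserts Mosco convergence, cites \cite[Corollary 2.6.1]{M}, and leaves the passage from semigroups to finite dimensional distributions implicit. Your induction with test functions of $\Gamma_{k_0}$ fills in that last step correctly.

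The gap is exactly the step you single out, and neither remedy you propose closes it. Both rest on (\ref{May4df}), which is only \emph{weak} convergence: it comes from the uniform bound (\ref{May6L3}) plus convergence of inner products against a dense class. The Mosco $\liminf$ condition must hold for sequences $u_k\rightharpoonup u$ that converge only weakly, whereas the Cauchy--Schwarz argument behind (\ref{May6L2}) needs $g_k\circ\Gamma_k\to g_0$ strongly. For weakly convergent $u_k$ the natural bound
\[
{\tilde {\cal E}}^{(2^k-1)}(u_k,u_k)\ge -2\big(({\cal L}^{(2^k-1)}f^{(k)})\circ\Gamma_k,\,u_k\big)-{\tilde {\cal E}}^{(2^k-1)}(f\circ\Gamma_{k_0},f\circ\Gamma_{k_0})
\]
contains a pairing of two merely weakly convergent sequences, which need not converge. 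The uniform bounds and the density of cylinder functions do not control this pairing. Likewise, Trotter--Kato type core criteria (e.g.\ \cite[Theorem 1.6.1]{EK}) need $({\cal L}^{(2^k-1)}f^{(k)})\circ\Gamma_k\to{\cal L}(f\circ\Gamma_{k_0})$ \emph{strongly}, so your claim that they only require (\ref{May4df}) is not correct. The paper also goes from (\ref{May6L1})--(\ref{May6L2}) straight to Mosco convergence, so this step is not supplied there either.

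The missing idea is a comparison valid for every $k$: $D({\tilde {\cal E}}^{(2^k-1)})\subset D({\cal E})$ and ${\cal E}(u,u)\le{\tilde {\cal E}}^{(2^k-1)}(u,u)$. For purely atomic $\nu=\sum_a w_a\delta_{s_a}$ one has $[\psi,\psi]_\nu=\sum_a w_a^{3/2}(\psi(s_a)-\langle\psi,\nu\rangle)^2$. Together with the identity $A^{(d)}(x)=\sum_{i=1}^{d+1}x_i^{3/2}(e_i-x)^T(e_i-x)$, this gives
\[
{\tilde {\cal E}}^{(2^k-1)}(g\circ\Gamma_k,g\circ\Gamma_k)=\frac12\int_{{\cal P}_1(S)}\sum_{j=1}^{2^k}\mu(B^k_j)^{3/2}\big(y_j-\langle\psi,\mu\rangle\big)^2\,\Pi_{\alpha,\theta,\nu_0}(d\mu),
\]
where $\psi=\nabla(g\circ\Gamma_k)(\mu)$ equals $y_j$ on $B^k_j$ (with $y_{2^k}=0$). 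Since $\mu$ is purely atomic $\Pi_{\alpha,\theta,\nu_0}$-a.s., ${\cal E}(g\circ\Gamma_k,g\circ\Gamma_k)$ is the same expression with $\mu(B^k_j)^{3/2}$ replaced by $\sum_{i:\,\xi_i\in B^k_j}\rho_i^{3/2}\le\mu(B^k_j)^{3/2}$. Closedness of ${\cal E}$ carries the inequality to the closure. Then for any $u_k\rightharpoonup u$,
\[
\liminf_{k\to\infty}{\tilde {\cal E}}^{(2^k-1)}(u_k,u_k)\ge\liminf_{k\to\infty}{\cal E}(u_k,u_k)\ge{\cal E}(u,u),
\]
by weak lower semicontinuity of the closed convex functional $u\mapsto{\cal E}(u,u)$ (set equal to $+\infty$ off $D({\cal E})$). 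That is the full Mosco $\liminf$ condition, and it needs no generator convergence at all. The recovery sequences then come from (\ref{May6L1}) and a diagonal argument, as you say.

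For that diagonal argument you need smooth functions of the $\Gamma_k$'s, not merely ${\cal F}$, to be ${\cal E}_1$-dense in $D({\cal E})$. This, like your claim that such functions determine laws on ${\cal P}_1(S)$, requires the partitions to generate ${\cal B}(S)$ modulo $\nu_0$, which the paper also assumes tacitly.
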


\subsection{Markov chain model and diffusion approximation}

Denote by
\[
h(y; \beta)= \frac{\beta}{\sqrt{2\pi}}y^{-\frac{3}{2}}e^{\frac{-\beta^2}{2y}},\ \ \ \ y>0
\]
the density function of the inverse Gaussian random variable with parameter $\beta$. For $d\in\mathbb{N}$, let $Y_1, \ldots, Y_{d+1}$ be independent inverse Gaussian random variables with respective parameters $p_1, \ldots, p_{d+1}$ satisfying $p_i>0$, $p_1+\cdots+p_{d+1}=1$. It is known that (cf. \cite{Car})
\[
(X_1, \ldots, X_{d+1})=\frac{1}{\sum_{i=1}^{d+1}Y_i}(Y_1, \ldots, Y_{d+1})
\]
has the two-parameter Dirichlet distribution with parameters $\alpha=\frac{1}{2}$ and $\nu=(p_1, \ldots, p_{d+1})$. Here we have $\theta=0$.

Consider the following independent  stochastic differential equations:
\[
d\,Y_i(t)=\frac{p_i^2}{\sqrt{Y_i(t)}}d\,t + Y^{\frac{3}{4}}_i(t) dB_i(t),\ \ \ \ 1\le i\le d+1,
\]
where $(B_1(t),\dots,B_{d+1}(t))$ is a standard $(d+1)$-dimensional Brownian motion.
Then, the diffusion coefficient in the above model corresponds to the diffusion
\[
(X_1(t), \ldots, X_{d+1}(t))=\frac{1}{\sum_{i=1}^{d+1}Y_i(t)}(Y_1(t), \ldots, Y_{d+1}(t))\]
conditioned on $\sum_{i=1}^{d+1}Y_i(t)=1$. This observation motivates us to consider a Markov chain and diffusion approximation for the model introduced in Section 3 when $\alpha=\frac{1}{2}$ and $\theta>-\alpha$.

Let $d,N\in\mathbb{N}$. We consider a population of $2N$ haploid individuals with some type space $\{A_1,\dots,A_{d+1}\}$. The time index is $\{0,1,2,\dots\}$. There will be no overlap between generations.
Let $X(0)=(X_1(0),\dots,X_{d+1}(0))$ with $X_i(0)$ denoting the number of type $A_i$ individuals at time zero, $1\le i\le d+1$.  Each individual will evolve under the influence of mutation and random
sampling. For $z=(z_1,\dots,z_{d+1})\in \mathbb{Z}^{d+1}$ satisfying $z_i\ge0$ and $\sum_{i=1}^{d+1}z_i=2N$, mutation is described by the following state-dependent matrix $(u_{ij}(z))_{1\le i,j\le d+1}$:
$$
u_{ij}(z)=\frac{1}{4}\left\{(3+2\theta)\left[\frac{z_j}{2N}-\left(\frac{z_j}{2N}\right)^{\frac{3}{2}}\right]+(2\theta+d+1)\frac{(\frac{z_j}{2N})^{-\frac{1}{2}}}{\sum_{k=1}^{d+1}(\frac{z_k}{2N})^{-1}}\right\},\ \ \ \ i\not=j.
$$
Let $p_i=\frac{X_i(0)}{2N}$, $1\le i\le d+1$. Then, for $1\le i\le d+1$, the proportion of type $A_i$ individuals after the mutation is
{\small\begin{eqnarray*}
p'_i&=&\sum_{j:j\not=i}^{d+1}p_ju_{ji}(X(0))-p_i\sum_{j:j\not=i}u_{ij}(X(0))\\
&=&\frac{1}{4}\Bigg\{(3+2\theta) \left[\frac{X_i(0)}{2N}-\left(\frac{X_i(0)}{2N}\right)^{\frac{3}{2}}\right]+\left(2\theta+d+1\right)\frac{(\frac{X_i(0)}{2N})^{-\frac{1}{2}}}{\sum_{k=1}^{d+1}(\frac{X_k(0)}{2N})^{-1}}\Bigg\}\\
&&-\frac{1}{4}\frac{X_i(0)}{2N}\Bigg\{(3+2\theta) \left[1-\sum_{k=1}^{d+1}\left(\frac{X_k(0)}{2N}\right)^{\frac{3}{2}}\right]+\left(2\theta+d+1\right)\frac{\sum_{k=1}^{d+1}(\frac{X_k(0)}{2N})^{-\frac{1}{2}}}{\sum_{k=1}^{d+1}(\frac{X_k(0)}{2N})^{-1}}\Bigg\},
\end{eqnarray*}}

\noindent which is equal to $b^{(d)}_i(\frac{X_1(0)}{2N},\dots,\frac{X_d(0)}{2N})$ for $1\le i\le d$ (cf. ({\ref{RTY1}) and (\ref{RTY2})).

The next generation $X(1)=(X_1(1),\dots,X_{d+1}(1))$ is obtained through the sampling mechanism that we will now describe.  Let the matrix  $A^{(d)}(x)$ be given by (\ref{May8a}), $x\in \Delta_d$, and let \(e_1, \dots, e_d\) be the standard basis row vectors and \(e_{d+1} =0\) (the origin). Note that
$$
A^{(d)}(x)=\sum_{i=1}^{d+1}x_i^{\frac{3}{2}}(e_i-x)^T(e_i-x).
$$
Then, we have
$$
{\rm diagonal}\left\{x\right\}-x^Tx-A^{(d)}(x)=\sum_{i=1}^{d+1}\left(x_i-x_i^{\frac{3}{2}}\right)(e_i-x)^T(e_i-x),
$$
which is a non-negative definite matrix. Thus, there exists a random vector $W=(W_1,\dots,W_d)$ supported on $\Delta_d$ such that its mean is $x=(\frac{X_1(0)}{2N},\dots,\frac{X_d(0)}{2N})$ and covariance matrix is $A^{(d)}(x)$ (cf. \cite{B}). Hence, we can generate the next generation $X(1)=(X_1(1),\dots,X_{d+1}(1))$ by letting
$$
X_i(1)=[2NW_1],\dots,X_i(d)=[2NW_d],1\le i\le d;\ \ X_{d+1}(1)=1-\sum_{i=1}^dX_i(1).
$$
Hereafter, $[w]$  denotes the integer part of  $w\in\mathbb{R}$. The general $n$-th generation can be obtained accordingly. The process $\{X(n)\}$ is a Markov chain with state space $\{A_1,\dots,A_{d+1}\}$.

Next we construct diffusion approximation of $\{X(n)\}$. For distinct $i,j$, change $(u_{ij})$ to $(\frac{u_{ij}}{2N})$. Set
\begin{eqnarray*}
\left\{
\begin{array}{ll}
Y^N(t)=(Y^N_1(t),\dots,Y^N_{d+1}(t)),\ \ \ \ Y^N_i(t)=X_i([2Nt]),\\
p^N(t)=(p^N_1(t),\dots,p^N_{d+1}(t))=\frac{Y^N(t)}{2N}.
\end{array}
\right.
\end{eqnarray*}
Define
$$
\Delta p^N(t)=p^N\left(t+\frac{1}{2N}\right)-p^N(t).
$$
Then, following the argument in \cite[Section 1.2]{F}, we can show that when the mutation rate is
scaled by a factor of $\frac{1}{2N}$ and time is counted in units of $2N$ generations the Markov chain model is approximated by the following diffusion process:
$$
dp_i(t)=b^{(d)}_i(p(t))dt+\sum_{j=1}^d\sigma^{(d)}_{ij}(p(t))dB_j(t),\ \ \ \ 1\le i\le d,
$$
with
$$
\sum_{j=1}^d\sigma^{(d)}_{il}(p(t))\sigma^{(d)}_{jl}(p(t))=a^{(d)}_{ij}(p(t)).
$$
The generator of the diffusion is ${\cal L}^{(d)}$ given by (\ref{Feb28a}).

\section{Dynamical models with general diffusion coefficient indices}\setcounter{equation}{0}

The infinite dimensional diffusion matrix \(a=(a_{ij})_{i,j=1}^{\infty}\) with index \(\alpha \) (see (\ref{May5D1})) is central to constructing dynamical models for \({\rm PD}(\alpha,\theta)\) and \(\Pi _{\alpha ,\theta ,\nu _{0}}\). Extending this parameter from \(\alpha\in(0,1)\) to a general index \(\gamma\in(0,\infty)\) reveals a much clearer picture of the underlying dynamics.

Let $\gamma\ge0$.  For $x\in \overline{\nabla}_{\infty}$, define
\begin{eqnarray*}
\left\{
\begin{array}{ll}
a^{(\gamma)}_{ii}(x)=(1-2x_i)x_i^{1+\gamma}+x_i^2H_{1+\gamma}(x),&\ \  i\in\mathbb{N},\\
a^{(\gamma)}_{ij}(x)=x_ix_j\left[H_{1+\gamma}(x)-x^{\gamma}_i-x^{\gamma}_j\right],&\ \ i\not=j,\ i,j\in\mathbb{N}.
\end{array}
\right.
\end{eqnarray*}
Similar to (\ref{May17C}), we can show that $a^{(\gamma)}=(a^{(\gamma)}_{ij})_{i,j=1}^{\infty}$ is non-negative definite. For $\alpha\in[0,1)$ and $\theta>-\alpha$, consider the following symmetric form on $L^2({\overline{\nabla}}_{\infty};{\rm PD}(\alpha,\theta))$:
\begin{eqnarray}\label{FGHJ}
{\cal A}^{\gamma,\alpha,\theta}(u,v)=\frac{1}{2}\sum_{i,j=1}^{\infty}\int_{{\overline{\nabla}}_{\infty}}a^{(\gamma)}_{ij}(x)\partial_i u(x)\partial_j v(x) {\rm PD}(\alpha,\theta)(dx),\ \ \ \ u,v\in{\cal P}.
\end{eqnarray}
We have
\begin{eqnarray*}
{\cal A}^{\gamma,\alpha,\theta}(u,u)=\frac{1}{2}\int_{{\overline{\nabla}}_{\infty}}\sum_{k=1}^{\infty}x_k^{1+\gamma}\left|\sum_{i=1}^{\infty}(\delta_{ki}-x_i)\partial_i u(x)\right|^2 {\rm PD}(\alpha,\theta)(dx),\ \ \ \ u\in{\cal P}.
\end{eqnarray*}
Then, for any $\alpha\in[0,1)$, $\theta>-\alpha$ and $u\in{\cal P}$,
\begin{eqnarray}\label{jkl0}
{\cal A}^{\gamma,\alpha,\theta}(u,u)\ {\rm is\ a\ decreasing\ function\ of}\ \gamma\in[0,\infty).
\end{eqnarray}

In Section 2, we showed that if \(\gamma=\alpha\), then \(({\cal A}^{\gamma,\alpha,\theta}, {\cal P})\) is closable, thereby establishing the corresponding dynamical model. Below, we demonstrate that \(({\cal A}^{\gamma,\alpha,\theta}, {\cal P})\) remains closable for \(\gamma \ge 1\), and we explicitly characterize its generator.

\subsection{The infinite dimensional unlabelled model}

Assume that $\gamma\ge1$. For $u\in {\cal P}$, define
\begin{eqnarray}\label{May20a}
{\cal G}^{\gamma,\alpha,\theta}u(x)&=&\frac{1}{2}\sum\limits_{i,j=1}^\infty a^{(\gamma)}_{ij}(x)\partial_i\partial_ju(x)-\frac{1}{2}\sum\limits_{i=1}^\infty x_i\bigg\{ (\theta+\gamma+1)\left[x_{i}^{\gamma}-H_{1+\gamma}(x)\right]\nonumber\\
&&\ \ \ \ -(\gamma-\alpha) \left[x_{i}^{\gamma-1}-H_{\gamma}(x)\right]\bigg\}\partial_iu(x),\ \ \ \ x\in {\overline{\nabla}}_{\infty}.
\end{eqnarray}

\begin{thm}\label{thm54} Let $\gamma\ge1$, $\alpha\in[0,1)$ and $\theta>-\alpha$.

\noindent  {\rm (i)} For any $u,v\in{\cal P}$,
\begin{eqnarray*}
{\cal A}^{\gamma,\alpha,\theta}(u,v)=-\int_{{\overline{\nabla}}_{\infty}}({\cal G}^{\gamma,\alpha,\theta}u) v\,d{\rm PD}(\alpha,\theta).
\end{eqnarray*}

\noindent {\rm (ii)}  The bilinear form $({\cal A}^{\gamma,\alpha,\theta}, {\cal P})$ is closable on $L^2({\overline{\nabla}}_{\infty};{\rm PD}(\alpha,\theta))$ and its closure  $({\cal A}^{\gamma,\alpha,\theta}, D({\cal A}^{\gamma,\alpha,\theta}))$ is a regular Dirichlet form.

\noindent {\rm (iii)}  There exists a time-reversible, conservative, diffusion process $(\{X^{\gamma,\alpha,\theta}_t\}_{t\ge 0},\{P^{\gamma,\alpha,\theta}_x\}_{x\in {\overline{\nabla}}_{\infty}})$ which is associated with $({\cal A}^{\gamma,\alpha,\theta}, D({\cal A}^{\gamma,\alpha,\theta}))$ and its stationary distribution is  ${\rm PD}(\alpha,\theta)$. Moreover,  there exists an ${\cal A}^{\gamma,\alpha,\theta}$-exceptional set $M\subset {\overline{\nabla}}_{\infty}$ such that for any $x\notin M$,
\begin{eqnarray*}
P^{\gamma,\alpha,\theta}_x\left\{\sum_{i=1}^{\infty}X^{\gamma,\alpha,\theta}_i(t)=1\ {\rm for\ all}\ t\in[0,\infty)\right\}=1.
\end{eqnarray*}

\noindent {\rm (iv)}  $({\cal A}^{1,\alpha,\theta}, D({\cal A}^{1,\alpha,\theta}))$ is irreducible recurrent and $(\{X^{1,\alpha,\theta}_t\}_{t\ge 0},\{P^{1,\alpha,\theta}_x\}_{x\in {\overline{\nabla}}_{\infty}})$ satisfies the strong law of large numbers, i.e., there exists an ${\cal A}^{1,\alpha,\theta}$-exceptional set $M\subset {\overline{\nabla}}_{\infty}$ such that for any $x\notin M$,
\begin{eqnarray*}
\lim_{t\rightarrow\infty}\frac{1}{t}\int_0^tf(X^{1,\alpha,\theta}_s)ds=\int_{{\overline{\nabla}}_{\infty}}f\,d{\rm PD}(\alpha,\theta)\ \ P^{1,\alpha,\theta}_x\text{-}a.s.,\ \ \ \ \forall f\in L^1({\overline{\nabla}}_{\infty};{\rm PD}(\alpha,\theta)).
\end{eqnarray*}
\end{thm}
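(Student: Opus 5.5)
Part (i) follows the scheme used for Theorem \ref{thm2.1}(i). First I would show that ${\cal G}^{\gamma,\alpha,\theta}$ satisfies the product rule
\[
{\cal G}^{\gamma,\alpha,\theta}(uv)=({\cal G}^{\gamma,\alpha,\theta}u)v+u{\cal G}^{\gamma,\alpha,\theta}v+\langle a^{(\gamma)}\nabla u,\nabla v\rangle .
\]
Then the integration-by-parts identity reduces to two facts, exactly as in Section 2: $\int {\cal G}^{\gamma,\alpha,\theta}v\,d{\rm PD}(\alpha,\theta)=0$, and the symmetry $\int (u{\cal G}^{\gamma,\alpha,\theta}v-v{\cal G}^{\gamma,\alpha,\theta}u)\,d{\rm PD}(\alpha,\theta)=0$, both for monomials $u=\prod H_{m_p}$, $v=\prod H_{n_r}$.

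To verify these, I would recompute the analogue of (\ref{May9e}):
\[
{\cal G}^{\gamma,\alpha,\theta}H_{n}=\frac{n}{2}\Big\{(n-1)H_{n-1+\gamma}-(\theta+\gamma+2n-1)H_{n+\gamma}+(\theta+\gamma+n)H_{1+\gamma}H_n+(\gamma-\alpha)\big[H_{n-1+\gamma}-H_\gamma H_n\big]\Big\}.
\]
The point of the new drift is that it contains no $L$. The $L$-term in Section 2 was needed only to cancel the exponent $1+\alpha$ hitting the singular boundary of Proposition \ref{proMay14a}. Here $\gamma\ge1$ guarantees that every exponent occurring ($\gamma$, $1+\gamma$, $n-1+\gamma$, $n+\gamma$) exceeds $\alpha$. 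Hence every term can be integrated directly by Proposition \ref{proMay14a}, with $\kappa_l$ being $N_j$ or $N_j$ shifted by $\gamma-1$, $\gamma$, and so on.

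The product formula (\ref{May14A}) carries over with $\alpha$ replaced by $\gamma$ in the exponents, plus the extra $(\gamma-\alpha)$ terms. I would again expand over partitions $c\in\sigma(k+l,w)$ and collect coefficients $\Lambda(b,d)$ and $\Upsilon(b,d)$ of the common factor $\prod_j\Gamma(\eta_j-\alpha)/\Gamma(1-\alpha)$. The ratios $\Gamma(\eta_j+\gamma-1-\alpha)/\Gamma(\eta_j-\alpha)$ and $\Gamma(\theta+\eta)/\Gamma(\theta+\eta+\gamma)$ now replace those in Section 2. The terms involving $H_\gamma H_n$ introduce an extra block of size $\gamma$, which produces a factor $(\theta+(w+1)\alpha)\Gamma(\gamma-\alpha)$, or else merges into an existing block.

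The main obstacle I expect is this cancellation: checking that $\Lambda-\Upsilon=0$, and that the analogue of $\Xi(c)$ vanishes, with the coefficient $\theta+\gamma+1$ and $\gamma-\alpha$ precisely tuned. My first move would be the one-monomial case $\int{\cal G}H_n=0$. There the identity $\Gamma(x+1)=x\Gamma(x)$ should combine the $H_{n-1+\gamma}$ and $(\gamma-\alpha)H_{n-1+\gamma}$ terms into $(n-1+\gamma-\alpha)\Gamma(n-1+\gamma-\alpha)$. I would then use the block decomposition by $w$ to show that new-block and merged-block contributions cancel separately.

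For (ii) through (iv) I would proceed as follows. Nonnegativity of ${\cal A}^{\gamma,\alpha,\theta}$ follows from the representation $\sum_k x_k^{1+\gamma}|\sum_i(\delta_{ki}-x_i)\partial_iu|^2$. Closability follows from (i). Markovianity, regularity and the existence of the diffusion follow verbatim from the proof of Theorem \ref{thm2.1}(ii),(iii), including the conservativeness argument via \cite{S1}.

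For (iv) with $\gamma=1$, I would take $f\in D({\cal A}^{1,\alpha,\theta})$ with ${\cal A}^{1,\alpha,\theta}(f,f)=0$ and show that $f$ is constant. The generator ${\cal G}^{1,\alpha,\theta}$ maps polynomials of degree $\le n$ in $H_2,H_3,\dots$ (graded by total degree) into themselves. This is because the exponents shift by integers only, and $H_1=1$ on the support. So ${\cal G}^{1,\alpha,\theta}$ is triangular on the finite-dimensional filtration ${\cal P}_n$, and its spectrum on each ${\cal P}_n$ is read off from the leading coefficient, namely $-\tfrac{n}{2}(\theta+n)$ plus lower-order corrections, which is strictly negative for $n\ge1$. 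This gives an $L^2$ spectral decomposition with a gap, so the kernel of the generator consists of constants only. Irreducibility then follows, recurrence follows from ${\cal L}1=0$ and finiteness of the measure, and the strong law comes from \cite{Fuku2}.
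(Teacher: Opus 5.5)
Your treatment of (i)--(iii) follows the paper's route; the paper itself only says these parts are proved as in Theorem \ref{thm2.1}. Your formula for ${\cal G}^{\gamma,\alpha,\theta}H_n$ agrees with (\ref{A}), and the single-monomial cancellation you anticipate does go through.

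The genuine gap is in (iv). Your central claim is false: ${\cal G}^{1,\alpha,\theta}$ does not preserve polynomials of bounded degree in $H_2,H_3,\dots$, and it is not triangular there. By (\ref{A}) with $\gamma=1$ and $H_1=1$,
\[
{\cal G}^{1,\alpha,\theta}H_n=\frac{n}{2}\Big\{(\theta+1+n)H_2H_n+(n-1)H_n-(\theta+2n)H_{n+1}\Big\}.
\]
This contains terms of degree $n+1$ and $n+2$, and the coefficient of $H_n$ is $\frac{n(n-1)}{2}>0$. The extra factors of $x_i$ in $a^{(1)}_{ij}$ and in the drift $x_i(x_i-H_2)$ raise the degree. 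The eigenvalue $-\frac{n}{2}(\theta+n)$ you quote is essentially the one for the Ethier--Kurtz generator ($\gamma=0$), where ${\cal G}H_n=\frac{n(n-1)}{2}H_{n-1}-\frac{n(n-1+\theta)}{2}H_n$ lowers degree.

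No argument that uses only ${\cal G}^{1,\alpha,\theta}$ acting on ${\cal P}$ can be rescued. On $\nabla_\infty$, ${\cal G}^{1,\alpha,\theta}$ does not depend on $\alpha$. So by part (i), every ${\rm PD}(\beta,\theta)$ with $\beta\in[0,1)$, $\beta>-\theta$, satisfies $\int{\cal G}^{1,\alpha,\theta}u\,d{\rm PD}(\beta,\theta)=0$ for all $u\in{\cal P}$. A triangular structure with nonzero diagonal on nonconstants would give ${\cal P}={\cal G}^{1,\alpha,\theta}({\cal P})+\mathbb{R}$. That would force such a probability measure to be unique, which it is not. Concretely, each relation $\int({\cal G}^{1,\alpha,\theta}H_n)f\,d{\rm PD}(\alpha,\theta)=0$ only expresses the new unknown $\int H_2H_nf$ through $\int H_{n+1}f$ and $\int H_nf$, so the system never closes.

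The missing idea is to use the whole family $\gamma\in\mathbb{N}$.
\begin{itemize}
\item The representation you used for nonnegativity shows that ${\cal A}^{\gamma,\alpha,\theta}(u,u)$ is decreasing in $\gamma$; this is (\ref{jkl0}).
\item Truncate $f$ to be bounded, nonnegative and nonzero. Then ${\cal A}^{1,\alpha,\theta}(f,f)=0$ gives $f\in D({\cal A}^{\gamma,\alpha,\theta})$ and ${\cal A}^{\gamma,\alpha,\theta}(f,f)=0$. Hence $\int({\cal G}^{\gamma,\alpha,\theta}u)f\,d{\rm PD}(\alpha,\theta)=0$ for all $u\in{\cal P}$ and $\gamma\in\mathbb{N}$.
\item For $\gamma\ge2$ these operators do depend on $\alpha$, through the coefficient $\gamma-\alpha$.
\item Combine the relation from ${\cal G}^{1,\alpha,\theta}H_n$ with the one from ${\cal G}^{n,\alpha,\theta}H_2$. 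This eliminates $\int H_2H_nf$ and gives a closed second-order recursion for $m_n=\int H_nf\,d{\rm PD}(\alpha,\theta)$.
\item The boundary condition $m_n\to0$ (by dominated convergence) fixes $m_3$ in terms of $m_2$. Hence $m_n$ is proportional to $\int H_n\,d{\rm PD}(\alpha,\theta)$ for all $n\ge2$.
\item An induction over products using (\ref{May20k}) extends this to all of ${\cal P}^*$. Since ${\cal P}^*$ is measure determining, $f$ is constant.
\end{itemize}
Irreducibility, recurrence and the strong law then follow as you say.
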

\vskip 0.3cm
\noindent {\bf Proof.}\ \ The proof of Parts (i)--(iii) is similar to that of Theorem \ref{thm2.1}. We omit the details here. In what follows, we only prove (iv). Unlike the one-parameter setting, proving the ergodicity of \(({\cal A}^{1,\alpha,\theta}, D({\cal A}^{1,\alpha,\theta}))\) via standard induction on lower-degree monomials in \(\mathcal{P}\) fails. Addressing this requires leveraging the entire collection of operators \({\cal G}^{\gamma,\alpha,\theta}\) across \(\gamma\in\mathbb{N}\).

Let $f$ be an arbitrary function in $D({\cal A}^{1,\alpha,\theta})$ satisfying ${\cal A}^{1,\alpha,\theta}(f,f)=0$. We will show that $f$ must be a constant.  Once this is proved, the assertions follow from \cite[Sections 1.6 and 4.7]{Fuku2}. Denote $f_n=\max\{\min\{f,n\},-n\}$. Note that
$$
{\cal A}^{1,\alpha,\theta}(f_n,f_n)\le {\cal A}^{1,\alpha,\theta}(f,f)
$$
and ${\cal A}^{1,\alpha,\theta}(1,1)=0$, hence we can assume without loss of generality that $f$ is bounded, non-negative and $f\not=0$.

By (\ref{jkl0}), we get
$$
f\in D({\cal A}^{\gamma,\alpha,\theta}),\ {\cal A}^{\gamma,\alpha,\theta}(f,f)=0,\ \forall \gamma\in\mathbb{N}.
$$
Then,
$$
\int_{{\overline{\nabla}}_{\infty}}({\cal G}^{\gamma,\alpha,\theta}u)f\,
d{\rm PD}(\alpha,\theta)=0,\ \ \ \  u\in{\cal P}, \gamma\in\mathbb{N}.
$$
Let
\begin{eqnarray}\label{May20g}
g\in\{1,f\}.
\end{eqnarray}
Then, we have
\begin{eqnarray}\label{May20b}
\int_{{\overline{\nabla}}_{\infty}}({\cal G}^{\gamma,\alpha,\theta}u)g\,
d{\rm PD}(\alpha,\theta)=0,\ \ \ \  u\in{\cal P}, \gamma\in\mathbb{N}.
\end{eqnarray}

For $\gamma\ge1$ and $n\ge2$, by (\ref{May20a}), we get
\begin{eqnarray}\label{A}
{\cal G}^{\gamma,\alpha,\theta}H_{n}
&=&\frac{n}{2}\bigg\{\left[(\theta+\gamma+n)H_{1+\gamma}-(\gamma-\alpha)H_{\gamma}\right]H_{n}\nonumber\\
&&\ \ \ \ \ \ +(n-1+\gamma-\alpha)H_{n-1+\gamma}-\left(\theta+\gamma+2n-1\right)H_{n+\gamma}\bigg\}.
\end{eqnarray}
Then,
\begin{eqnarray*}
{\cal G}^{1,\alpha,\theta}H_{n}
=\frac{n}{2}\bigg\{(\theta+1+n)H_{2}H_{n}+(n-1)H_{n}-\left(\theta+2n\right)H_{n+1}\bigg\},
\end{eqnarray*}
which together with (\ref{May20b}) implies that
\begin{eqnarray}\label{F}
\int_{{\overline{\nabla}}_{\infty}}H_{2}H_{n}g\,
d{\rm PD}(\alpha,\theta)=\frac{1}{\theta+1+n}\int_{{\overline{\nabla}}_{\infty}}\left[\left(\theta+2n\right)H_{n+1}-(n-1)H_{n}\right]g\,
d{\rm PD}(\alpha,\theta).
\end{eqnarray}
Thus,
\begin{eqnarray}\label{May20d}
\int_{{\overline{\nabla}}_{\infty}}H_{2}H_{n+1}g\,
d{\rm PD}(\alpha,\theta)=\frac{1}{\theta+2+n}\int_{{\overline{\nabla}}_{\infty}}\left[\left(\theta+2n+2\right)H_{n+2}-nH_{n+1}\right]g\,
d{\rm PD}(\alpha,\theta).
\end{eqnarray}

By (\ref{May20b}) and (\ref{A}), we get
\begin{eqnarray*}
&&\int_{{\overline{\nabla}}_{\infty}} H_{1+\gamma}H_{n}g\,
d{\rm PD}(\alpha,\theta)\\
&=&\frac{1}{\theta+\gamma+n}\bigg\{(\gamma-\alpha)\int_{{\overline{\nabla}}_{\infty}} H_{\gamma}H_{n}g\,
d{\rm PD}(\alpha,\theta)\nonumber\\
&&\ \ \ \ +\int_{{\overline{\nabla}}_{\infty}} \left[\left(\theta+\gamma+2n-1\right)H_{n+\gamma}-(n-1+\gamma-\alpha)H_{n-1+\gamma}\right]g\,
d{\rm PD}(\alpha,\theta)\bigg\}.
\end{eqnarray*}
Then,
\begin{eqnarray}\label{PQ2}
&&\int_{{\overline{\nabla}}_{\infty}} H_{n+1}H_{2}g\,
d{\rm PD}(\alpha,\theta)\nonumber\\
&=&\frac{1}{\theta+2+n}\bigg\{(n-\alpha)\int_{{\overline{\nabla}}_{\infty}} H_{n}H_{2}g\,
d{\rm PD}(\alpha,\theta)\nonumber\\
&&\ \ \ \ +\int_{{\overline{\nabla}}_{\infty}} \left[\left(\theta+n+3\right)H_{n+2}-(n+1-\alpha)H_{n+1}\right]g\,
d{\rm PD}(\alpha,\theta)\bigg\}.
\end{eqnarray}
Thus, by (\ref{May20d}) and (\ref{PQ2}), we obtain
\begin{eqnarray*}
\left(n-1\right)\int_{{\overline{\nabla}}_{\infty}} H_{n+2}g\,
d{\rm PD}(\alpha,\theta)&=&(n-\alpha)\int_{{\overline{\nabla}}_{\infty}}H_{2}H_{n}g\,
d{\rm PD}(\alpha,\theta)\nonumber\\
&&+(\alpha-1)\int_{{\overline{\nabla}}_{\infty}} H_{n+1}g\,
d{\rm PD}(\alpha,\theta),
\end{eqnarray*}
which together with  (\ref{F}) implies that
\begin{eqnarray*}
\int_{{\overline{\nabla}}_{\infty}} H_{n+2}g\,
d{\rm PD}(\alpha,\theta)&=&\frac{1}{\theta+1+n}\int_{{\overline{\nabla}}_{\infty}}\left[\frac{(\theta+2n)(n-\alpha)}{n-1}H_{n+1}-(n-\alpha)H_{n}\right]g\,
d{\rm PD}(\alpha,\theta)\\
&&+\frac{\alpha-1}{n-1}\int_{{\overline{\nabla}}_{\infty}} H_{n+1}g\,
d{\rm PD}(\alpha,\theta)\\
&=&\frac{2n+1+\theta-\alpha}{\theta+1+n}\int_{{\overline{\nabla}}_{\infty}} H_{n+1}g\,
d{\rm PD}(\alpha,\theta)-\frac{n-\alpha}{\theta+1+n}\int_{{\overline{\nabla}}_{\infty}} H_{n}g\,
d{\rm PD}(\alpha,\theta).
\end{eqnarray*}
Further, based on this recursion, we obtain 
\begin{eqnarray}\label{May20f}
&&\int_{{\overline{\nabla}}_{\infty}}  H_{n}g\,
d{\rm PD}(\alpha,\theta)\nonumber\\
&=&\int_{{\overline{\nabla}}_{\infty}}  H_{2}g\,
d{\rm PD}(\alpha,\theta)\nonumber\\
&& + \frac{\Gamma( \theta +3)}{\Gamma(2 - \alpha)} \sum_{k=2}^{n-1} \frac{\Gamma(k - \alpha)}{\Gamma(k + \theta + 1)}\int_{{\overline{\nabla}}_{\infty}}  (H_{3}-H_2)g\,
d{\rm PD}(\alpha,\theta),\ \ \ \ \forall n\ge3.
\end{eqnarray}

Note that
\begin{eqnarray}\label{iop}
\frac{\Gamma(k - \alpha)}{\Gamma(k + \theta + 1)}=\frac{1}{\theta+\alpha}\left[\frac{\Gamma(k-\alpha)}{\Gamma(k+\theta)}-\frac{\Gamma(k+1-\alpha)}{\Gamma(k+\theta+1)}\right],
\end{eqnarray}
and
$$
\frac{\Gamma(k - \alpha)}{\Gamma(k + \theta + 1)}\sim\frac{1}{k^{\theta+\alpha+1}}.
$$
Then, by (\ref{May20f}), we get
\begin{eqnarray*}
&&\lim_{n\rightarrow\infty}\int_{{\overline{\nabla}}_{\infty}}  H_{n}g\,
d{\rm PD}(\alpha,\theta)\\
&=&\int_{{\overline{\nabla}}_{\infty}}  H_{2}g\,
d{\rm PD}(\alpha,\theta) + \frac{\Gamma( \theta +3)}{\Gamma(2 - \alpha)}\sum_{k=2}^{\infty} \frac{\Gamma(k - \alpha)}{\Gamma(k + \theta + 1)}\int_{{\overline{\nabla}}_{\infty}}  (H_{3}-H_2)g\,
d{\rm PD}(\alpha,\theta)\\
&=&\int_{{\overline{\nabla}}_{\infty}}  H_{2}g\,
d{\rm PD}(\alpha,\theta) + \frac{\Gamma( \theta +3)}{\Gamma(2 - \alpha)}\frac{\Gamma(2 - \alpha)}{(\theta+\alpha)\Gamma( \theta +2 )}\int_{{\overline{\nabla}}_{\infty}}  (H_{3}-H_2)g\,
d{\rm PD}(\alpha,\theta)\\
&=&\int_{{\overline{\nabla}}_{\infty}}  H_{2}g\,
d{\rm PD}(\alpha,\theta) + \frac{ \theta+2 }{\theta+\alpha}\int_{{\overline{\nabla}}_{\infty}}  (H_{3}-H_2)g\,
d{\rm PD}(\alpha,\theta).
\end{eqnarray*}
Since $\lim_{n\rightarrow\infty}\int_{{\overline{\nabla}}_{\infty}}  H_{n}g\,
d{\rm PD}(\alpha,\theta)=0$, we obtain
$$
\int_{{\overline{\nabla}}_{\infty}} H_{3}g\,
d{\rm PD}(\alpha,\theta)=\frac{2-\alpha}{\theta+2}\int_{{\overline{\nabla}}_{\infty}} H_{2}g\,
d{\rm PD}(\alpha,\theta),
$$
which together with (\ref{May20f}) and (\ref{iop}) implies that
\begin{eqnarray}\label{May19a}
\int_{{\overline{\nabla}}_{\infty}} H_{n}g\,
d{\rm PD}(\alpha,\theta)=\frac{\Gamma(\theta+2  )\Gamma(n - \alpha)}{\Gamma(2 - \alpha)\Gamma(n + \theta)} \int_{{\overline{\nabla}}_{\infty}} H_{2}g\,
d{\rm PD}(\alpha,\theta),\ \ \ \ \forall n\ge2.
\end{eqnarray}
Define
\begin{eqnarray}\label{May20w}
h=\frac{\int_{{\overline{\nabla}}_{\infty}} H_{2}\,
d{\rm PD}(\alpha,\theta)}{\int_{{\overline{\nabla}}_{\infty}} H_{2}f\,
d{\rm PD}(\alpha,\theta)}f.
\end{eqnarray}
Then, by (\ref{May20g}) and (\ref{May19a}), we obtain 
\begin{eqnarray}\label{May20i}
\int_{{\overline{\nabla}}_{\infty}} H_{n}h\,
d{\rm PD}(\alpha,\theta)&=&\frac{\int_{{\overline{\nabla}}_{\infty}} H_{2}\,
d{\rm PD}(\alpha,\theta)}{\int_{{\overline{\nabla}}_{\infty}} H_{2}f\,
d{\rm PD}(\alpha,\theta)}\frac{\Gamma( \theta+2 )\Gamma(n - \alpha)}{\Gamma(2 - \alpha)\Gamma(n + \theta)} \int_{{\overline{\nabla}}_{\infty}} H_{2}f\,
d{\rm PD}(\alpha,\theta)\nonumber\\
&=&\frac{\Gamma(\theta +2 )\Gamma(n - \alpha)}{\Gamma(2 - \alpha)\Gamma(n + \theta)} \int_{{\overline{\nabla}}_{\infty}} H_{2}\,
d{\rm PD}(\alpha,\theta)\nonumber\\
&=&\int_{{\overline{\nabla}}_{\infty}} H_{n}\,
d{\rm PD}(\alpha,\theta),\ \ \ \ \forall n\ge2.
\end{eqnarray}

Let $$
v=\prod_{r=1}^lH_{n_r},\ \ \ \ n=\sum_{r=1}^ln_r,
$$
where $n_1,\dots,n_l\in\{2,3,\dots\}$  and $l\in\mathbb{N}$. By (\ref{May20a}), we get
\begin{eqnarray}\label{May20k}
{\cal G}^{\gamma,\alpha,\theta}v
&=&\frac{n}{2}\left[(\theta+\gamma+n)H_{1+\gamma}-(\gamma-\alpha)H_{\gamma}\right]\prod_{r=1}^lH_{n_r}\nonumber\\
&&+\frac{1}{2}\sum_{r=1}^ln_r\left[(n_{r}-1+\gamma-\alpha)H_{n_r-1+\gamma}-\left(\theta+\gamma+2n_r-1\right)H_{n_r+\gamma}\right]\prod_{t\not=r}H_{n_t}\nonumber\\
&&+\frac{1}{2}\sum_{r\not=v}{n_r}{n_v}H_{n_r+n_v-1+\gamma}\prod_{t\not=r,v}H_{n_t}-\sum_{r=1}^l{n_r}H_{n_r+\gamma}\sum_{v\not=r}{n_v}\prod_{t\not=r}H_{n_t}.
\end{eqnarray}
Letting $\gamma=1$ and noticing that $H_1=1$, by (\ref{May20i}) and  (\ref{May20k}), we get
\begin{eqnarray*}
\int_{{\overline{\nabla}}_{\infty}}H_{2}H_{n}h\,
d{\rm PD}(\alpha,\theta)=\int_{{\overline{\nabla}}_{\infty}} H_2H_{n}\,
d{\rm PD}(\alpha,\theta),\ \ \ \ \forall n\ge2.
\end{eqnarray*}
Further, by virtue of $\int_{{\overline{\nabla}}_{\infty}}({\cal G}^{\gamma,\alpha,\theta}v)h\,
d{\rm PD}(\alpha,\theta)=\int_{{\overline{\nabla}}_{\infty}}{\cal G}^{\gamma,\alpha,\theta}v\,
d{\rm PD}(\alpha,\theta)=0$, (\ref{May20k}) and using induction, we deduce that
\begin{eqnarray}\label{May20j}
\int_{{\overline{\nabla}}_{\infty}}vh\,
d{\rm PD}(\alpha,\theta)=\int_{{\overline{\nabla}}_{\infty}} v\,
d{\rm PD}(\alpha,\theta),\ \ \ \ \forall v\in{\cal P}^*,
\end{eqnarray}
where
$$
{\cal P}^*=\left\{\prod_{r=1}^lH_{n_r}:n_1,\dots,n_l\in\{2,3,\dots\},l\in\mathbb{N}\right\}.
$$

As pointed out in \cite{{EK2},{S1}}, the following convergence takes place pointwise on $\nabla_{\infty}$:
$$
[H_{n}(x)]^{\frac{1}{n}}=\left(\sum_{i=1}^{\infty}x_i^n\right)^{\frac{1}{n}}\rightarrow x_1\ {\rm as}\ n\rightarrow\infty,
$$
and for $k\in\mathbb{N}$,
$$
\left[H_{n}(x)-\sum_{i=1}^kx_i^n\right]^{\frac{1}{n}}=\left(\sum_{i=k+1}^{\infty}x_i^n\right)^{\frac{1}{n}}\rightarrow x_{k+1}\ {\rm as}\ n\rightarrow\infty.
$$
Then, ${\cal P}^*$ is a measure determining set.
Thus, by (\ref{May20j}), we obtain that
\begin{eqnarray*}
h\,
d{\rm PD}(\alpha,\theta)=d{\rm PD}(\alpha,\theta).
\end{eqnarray*}
Hence, $h=1$. Therefore,  $f$ is a constant by (\ref{May20w}).
\hfill $\Box$

\begin{rem}
It is worth noting that the case $\gamma=1$ is very special. We have
\begin{eqnarray*}
{\cal G}^{1,\alpha,\theta}u(x)=\frac{1}{2}\sum\limits_{i,j=1}^\infty a^{(1)}_{ij}(x)\partial_i\partial_ju(x)-\frac{\theta+2}{2}\sum\limits_{i=1}^\infty x_i\left[x_{i}-H_{2}(x)\right]\partial_iu(x),\ \ \ \ x\in {\overline{\nabla}}_{\infty},
\end{eqnarray*}
which is independent of $\alpha$. Then, by Theorem \ref{thm54}, we find that  each
${\rm PD}(\alpha,\theta)$ is an invariant measure of the generator ${\cal G}^{1,\alpha,\theta}$. However, this does not contradict the ergodicity of the diffusion process \(\{X_t^{1,\alpha,\theta}\}_{t\ge0}\), because \({\rm PD}(\alpha,\theta)\) and \({\rm PD}(\beta,\theta)\) are mutually singular for distinct \(\alpha,\beta\in(0,1)\).
\end{rem}

\subsection{The infinite dimensional labelled model}

For $\mu\in {\cal P}_1(S)$ with $\mu_a=\sum_{i=1}^{\infty}\rho_i\delta_{\xi_i}$, where $\rho=(\rho_1,\rho_2,\dots)\in \overline{{\nabla}}_{\infty}$  and $\xi_i\in S$, $i\ge1$. Define
$$
H_{1+\gamma}(\mu)=H_{1+\gamma}(\rho),\ \ \ \ \mu^{1+\gamma}=\sum_{i=1}^{\infty}\rho_i^{1+\gamma}\delta_{\xi_i},
$$
and for $\phi,\psi\in B_b(S)$,
\begin{eqnarray*}
[\phi,\psi]^{(\gamma)}_{\mu}&=&\langle \phi\psi,\mu^{1+\gamma}\rangle+H_{1+\gamma}(\mu)\langle \phi,\mu_a\rangle\langle \psi,\mu_a\rangle-\langle \phi,\mu^{1+\gamma}\rangle\langle \psi,\mu_a\rangle-\langle \phi,\mu_a\rangle\langle \psi,\mu^{1+\gamma}\rangle\\
&&+\langle \phi,\psi\rangle_{\mu_{na}}.
\end{eqnarray*}
Let  ${\cal F}$ be the space of all cylinder functions, which is given by (\ref{May6Ha}).  Consider the following  symmetric form on $L^2({\cal P}_1(S);\Pi_{\alpha,\theta,\nu_0})$:
\begin{eqnarray*}
{\cal E}^{\gamma,\alpha,\theta}(F,G)=\frac{1}{2}\int_{{\cal P}_1(S)}[ \nabla
F(\mu),\nabla G(\mu)]^{(\gamma)}_{\mu}
\Pi_{\alpha,\theta,\nu_0}(d\mu),\ \ \ \ F,G\in
{\cal F}.
\end{eqnarray*}
Similar to (\ref{May6a}), we can show that $({\cal E}^{\gamma,\alpha,\theta}, {\cal F})$ is non-negative definite.

Assume that $\gamma\ge 1$. For $F(\mu)=f(\langle \phi_1,\mu\rangle,\dots,$ $\langle
\phi_n,\mu\rangle)\in{\cal F}$, define
{\small\begin{eqnarray*}
{\cal L}^{\gamma,\alpha,\theta}F(\mu)&=&
\frac{1}{2}\sum\limits_{i,j=1}^n\partial_i\partial_jf(\langle \phi_1,\mu\rangle,\dots,\langle
\phi_n,\mu\rangle)[\phi_i,\phi_j]^{(\gamma)}_{\mu}-\frac{1}{2}\sum_{i=1}^n\partial_if(\langle \phi_1,\mu\rangle,\dots,\langle
\phi_n,\mu\rangle)\nonumber\\
&&\ \ \cdot\bigg\{(\theta+\gamma+1)\left[\langle\phi_i,\mu^{1+\gamma}\rangle-\langle\phi_i,H_{1+\gamma}(\mu)\mu\rangle\right]-(\gamma-\alpha) \left[\langle\phi_i,\mu^{\gamma}\rangle-\langle\phi_i,H_{\gamma}(\mu)\mu\rangle\right]\bigg\}.
\end{eqnarray*}}

\begin{thm}\label{thm5.2}  Let $\gamma\ge1$, $\alpha\in[0,1)$ and $\theta>-\alpha$.

\noindent {\rm (i)} For any $F,G\in{\cal F}$,
\begin{eqnarray*}
{\cal E}^{\gamma,\alpha,\theta}(F,G)=-\int_{{\cal P}_1(S)}({\cal L}^{\gamma,\alpha,\theta}F) G\,d\Pi_{\alpha,\theta,\nu_0}.
\end{eqnarray*}

\noindent {\rm (ii)}  The bilinear form $({\cal E}^{\gamma,\alpha,\theta}, {\cal F})$ is closable on $L^2({\cal P}_1(S);\Pi_{\alpha,\theta,\nu_0})$ and its closure  $({\cal E}^{\gamma,\alpha,\theta}, D({\cal E}^{\gamma,\alpha,\theta}))$ is a quasi-regular Dirichlet form.

\noindent {\rm (iii)}  There exists a time-reversible, conservative, diffusion process $(\{X^{\gamma,\alpha,\theta}_t\}_{t\ge 0},\{P^{\gamma,\alpha,\theta}_\mu\}_{\mu\in {\cal P}_1(S)})$ which is properly associated with $({\cal E}^{\gamma,\alpha,\theta}, D({\cal E}^{\gamma,\alpha,\theta}))$ and its stationary distribution is  $\Pi_{\alpha,\theta,\nu_0}$.

\noindent {\rm (iv)}  Let $A\in {\cal B}(S)$. If $\nu_0(A)=0$, then there exists an ${\cal E}^{\gamma,\alpha,\theta}$-exceptional set $N_A\subset {\cal P}_1(S)$ such that for any $\mu\notin N_A$,
\begin{eqnarray*}
P^{\gamma,\alpha,\theta}_{\mu}\left\{X^{\gamma,\alpha,\theta}_t(A)=0\ {\rm for\ all}\ t\in[0,\infty)\right\}=1.
\end{eqnarray*}
Moreover, there exists an ${\cal E}^{\gamma,\alpha,\theta}$-exceptional set $N\subset {\cal P}_1(S)$ such that for any $\mu\notin N$,
\begin{eqnarray*}
P^{\gamma,\alpha,\theta}_{\mu}\left\{X^{\gamma,\alpha,\theta}_t\ {\rm is\ purely\ atomic\ for\ all}\ t\in[0,\infty)\right\}=1,
\end{eqnarray*}
and
\begin{eqnarray*}
P^{\gamma,\alpha,\theta}_{\mu}\left\{t\mapsto X^{\gamma,\alpha,\theta}_t\ {\rm is\ continuous\ in\ variation\ norm}\right\}=1.
\end{eqnarray*}
\end{thm}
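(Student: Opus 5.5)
The proof will follow the template of Theorem \ref{thm3.1}; the only genuinely new work is the symmetry computation in (i).

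\textbf{Reduction of (i) to symmetry.} For product cylinder functions $F_{\phi_1,\dots,\phi_n}(\mu)=\prod_i\langle\phi_i,\mu\rangle$ the operator ${\cal L}^{\gamma,\alpha,\theta}$ is a second-order operator with carré du champ $[\cdot,\cdot]^{(\gamma)}_\mu$. Hence the analogue of (\ref{Apr28e}) holds:
\[
{\cal L}^{\gamma,\alpha,\theta}(FG)=({\cal L}^{\gamma,\alpha,\theta}F)G+F{\cal L}^{\gamma,\alpha,\theta}G+[\nabla F,\nabla G]^{(\gamma)}_\mu .
\]
We also have ${\cal L}^{\gamma,\alpha,\theta}1=0$. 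So, exactly as in the proof of Theorem \ref{thm3.1}(i), it suffices to prove the symmetry relation (\ref{Apr28f}) for products. The identity then extends to all of ${\cal F}$ by polynomial approximation.

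\textbf{Expanding ${\cal L}^{\gamma,\alpha,\theta}F_{\phi_1,\dots,\phi_n}$.} First I will write out the analogue of (\ref{May9k}). It is a sum of four kinds of terms:
\begin{itemize}
\item $\langle\phi_i\phi_j,\mu^{1+\gamma}\rangle\prod_{l\ne i,j}\langle\phi_l,\mu\rangle$;
\item $\frac n2[(\theta+\gamma+n)H_{1+\gamma}-(\gamma-\alpha)H_\gamma]F$;
\item $-(\frac{\theta+\gamma+1}{2}+n-1)\sum_i\langle\phi_i,\mu^{1+\gamma}\rangle\prod_{l\ne i}\langle\phi_l,\mu\rangle$;
\item $\frac{\gamma-\alpha}{2}\sum_i\langle\phi_i,\mu^{\gamma}\rangle\prod_{l\ne i}\langle\phi_l,\mu\rangle$.
\end{itemize}
The last term replaces the $L(\mu)\nu_0(\phi_i)$ term of Section 3. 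Because $\gamma\ge1$, every power of $\rho$ that appears is at least $1$. In particular, $\langle\phi_i,\mu^{\gamma}\rangle$ with exponent $\gamma\ge1>\alpha$ is admissible in Proposition \ref{proMay14a}, and no diversity term $L$ enters.

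\textbf{Partition computation.} Integrate $G\,{\cal L}^{\gamma,\alpha,\theta}F$ against $\Pi_{\alpha,\theta,\nu_0}$. Conditioning on the atoms and using Proposition \ref{proMay14a}, as in \cite[Lemma 5.2.1]{F}, the result is a sum over pairs $(b,d)$ of partitions, weighted by
\[
\frac{\prod_{j<k}(\theta+j\alpha)}{\Gamma(\theta+\cdot)}\prod_j\frac{\Gamma(|b_j|+|d_j|-\alpha)}{\Gamma(1-\alpha)}.
\]
Each term contributes a factor of the form $\Gamma(\eta_j+\gamma-1)/\Gamma(\eta_j-\alpha)$ or $\Gamma(\eta_j+\gamma)/\Gamma(\eta_j-\alpha)$, with $\eta_j=|b_j|+|d_j|$, times a ratio $\Gamma(\theta+2n)/\Gamma(\theta+2n+\gamma)$ or $\Gamma(\theta+2n)/\Gamma(\theta+2n+\gamma-1)$. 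The $H_{1+\gamma}F$ and $H_\gamma F$ terms produce a block sum over the partition, as in the formula for $\Xi(c)$. I will then compute the coefficient $h(b,d)$ and show $h(b,d)=h(d,b)$.

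\textbf{Main obstacle.} The key cancellation, and the step I expect to be hardest, is the following. The terms carrying $\Gamma(\eta_j+\gamma-1)$ come from three sources: the $(\gamma-\alpha)H_\gamma$ part, the $(\gamma-\alpha)\mu^\gamma$ part, and the $\binom{|d_j|}{2}$ part. Their combined coefficient should factor through $\Gamma(\eta_j+\gamma)/\Gamma(\eta_j-\alpha)$ via the identity
\[
(\eta+\gamma-1)-(\gamma-\alpha)=\eta-1+\alpha .
\]
After this factoring, the non-symmetric parts should reduce to $|d_j|(|d_j|-1)-|d_j|\eta_j\cdot(\cdot)$, which cancels under the swap $b\leftrightarrow d$ exactly as in the computation of $h(b,d)-h(d,b)$ in Section 3. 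The same bookkeeping with the $H_{1+\gamma}$ terms was already carried out in the $\Lambda-\Upsilon$ computation of Theorem \ref{thm2.1}, so I would mirror that algebra with $\alpha$ replaced by $\gamma$ in the exponents.

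\textbf{Parts (ii)–(iv).} Nonnegativity of $[\phi,\phi]^{(\gamma)}_\mu$ follows as in (\ref{May6a}). Closability then follows from (i), and Markovianity is preserved under closure. Quasi-regularity uses the functions $f_{ij}$ and the bound $[g_i,g_i]^{(\gamma)}_\mu\le4$, following \cite[Theorem 3.4]{RS}. This gives (ii). Part (iii) follows from \cite{AMR}, \cite[Chapter IV, Theorem 3.5]{MR} and \cite[Proposition 2.3]{S2}. Part (iv) follows the arguments of \cite[Theorems 6.4, 6.5 and 6.9]{ORS}, using the explicit form of $[\cdot,\cdot]^{(\gamma)}_\mu$.
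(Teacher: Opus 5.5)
Your plan follows the paper, which disposes of this theorem by saying the proof is similar to that of Theorem~\ref{thm3.1}. That means: reduce (i) to symmetry on products, expand with Proposition~\ref{proMay14a}, check $h(b,d)=h(d,b)$, and repeat (ii)--(iv) verbatim. Your four-term expansion of ${\cal L}^{\gamma,\alpha,\theta}F_{\phi_1,\dots,\phi_n}$ is correct.

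The problem is the one genuinely new step, the cancellation, which you mis-specify in two ways.

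First, the Gamma factors are off. Proposition~\ref{proMay14a} attaches $\Gamma(\kappa-\alpha)$ to a block of total exponent $\kappa$. So the block factors are $\Gamma(\eta_j+\gamma-1-\alpha)/\Gamma(\eta_j-\alpha)$ and $\Gamma(\eta_j+\gamma-\alpha)/\Gamma(\eta_j-\alpha)$, not $\Gamma(\eta_j+\gamma-1)/\Gamma(\eta_j-\alpha)$ and $\Gamma(\eta_j+\gamma)/\Gamma(\eta_j-\alpha)$. If you run the algebra with your factors and your identity $(\eta+\gamma-1)-(\gamma-\alpha)=\eta-1+\alpha$, the per-block remainder is $|d_j|(|d_j|-1)-|d_j|(\eta_j-1+\alpha)=-|d_j|(|b_j|+\alpha)$. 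Its $-\alpha|d_j|$ part is not invariant under $b\leftrightarrow d$, so the argument as written does not close.

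Second, you put the wrong terms in the group. The $(\gamma-\alpha)H_\gamma F$ term, like the $H_{1+\gamma}F$ term, depends only on $n$, the number $k$ of blocks, and the $\eta_j$. Its new-block part carries $(\theta+k\alpha)\Gamma(\gamma-\alpha)/\Gamma(1-\alpha)$, so it is symmetric by itself. The term that actually has to be combined is $-\frac{\theta+\gamma+2n-1}{2}\sum_i\langle\psi_i,\mu^{1+\gamma}\rangle\prod_{l\ne i}\langle\psi_l,\mu\rangle$.

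Here is the correct bookkeeping, with the operator acting on the $\psi$-product indexed by $d$, as in the paper. Put $g_j=\Gamma(\eta_j+\gamma-1-\alpha)/\Gamma(\eta_j-\alpha)$ and $Q=\theta+2n+\gamma-1$, and factor out $\Gamma(\theta+2n)/\Gamma(\theta+2n+\gamma)$ relative to the base weight. Then the contributions of block $j$ from $\langle\psi_i\psi_l,\mu^{1+\gamma}\rangle$, $\langle\psi_i,\mu^{1+\gamma}\rangle$ and $\langle\psi_i,\mu^{\gamma}\rangle$ sum to
\[
Q\binom{|d_j|}{2}g_j-\frac{Q}{2}|d_j|(\eta_j+\gamma-1-\alpha)g_j+\frac{Q}{2}(\gamma-\alpha)|d_j|g_j=-\frac{Q}{2}|b_j||d_j|g_j .
\]
This uses $\Gamma(\eta_j+\gamma-\alpha)=(\eta_j+\gamma-1-\alpha)\Gamma(\eta_j+\gamma-1-\alpha)$ and $\frac{\theta+\gamma+1}{2}+n-1=\frac{Q}{2}$. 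The right-hand side is symmetric in $(b,d)$, so $h(b,d)=h(d,b)$ and (i) follows. The identity you need is $(\eta_j+\gamma-1-\alpha)-(\gamma-\alpha)=\eta_j-1$.

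One small addition: the statement allows $\alpha=0$, but Proposition~\ref{proMay14a} is proved only for $\alpha\in(0,1)$. For $\alpha=0$ you need its classical analogue, which is the same formula with $\prod_{l=1}^{m-1}(\theta+l\alpha)$ replaced by $\theta^{m-1}$.

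Your treatment of (ii)--(iv) matches the paper.
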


The proof of Theorem \ref{thm5.2} is similar to that of Theorem \ref{thm3.1}. We omit the details here.

\subsection{The finite dimensional model and approximation}

\subsubsection{The case $\alpha=\frac{1}{2}$}

Assume that $\gamma\ge1$, $\alpha=\frac{1}{2}$ and $\theta>-\frac{1}{2}$. For $x\in\Delta_d$, define
$$
H^{(d)}_{1+\beta}(x)=\sum_{i=1}^{d+1}x_i^{1+\beta},\ \ \ \ \beta\ge0,
$$
and
\begin{eqnarray*}
\left\{
\begin{array}{ll}
a^{(\gamma,d)}_{ii}(x)=(1-2x_i)x_i^{1+\gamma}+x_i^2H^{(d)}_{1+\gamma}(x),&\ \  1\le i\le d,\\
a^{(\gamma,d)}_{ij}(x)=x_ix_j\left[H^{(d)}_{1+\gamma}(x)-x_i^{\gamma}-x_j^{\gamma}\right],&\ \ i\not=j,\ 1\le i,j\le d.
\end{array}
\right.
\end{eqnarray*}
Let $\varsigma_d$ be defined by (\ref{May6Q1}). Consider the following non-negative definite symmetric form on $L^2({\Delta_d}; \varsigma_d(x)dx)$:
\begin{eqnarray*}
{\cal E}^{(\gamma,\frac{1}{2},\theta,d)}(f,g)=\frac{1}{2}\sum_{i,j=1}^{d}\int_{\Delta_d}
a^{(\gamma,d)}_{ij}(x)\partial_if(x)\partial_jg(x)\varsigma_d(x)dx,\ \ \ \
f,g\in  C^{\infty}({\Delta_d}).
\end{eqnarray*}
By direct calculation, similar to \S 4.1, we can show that the informal generator of $({\cal E}^{(\gamma,\frac{1}{2},\theta,d)}, C^{\infty}({\Delta_d}))$ is given by
\begin{eqnarray}\label{May17F}
{\cal L}^{(\gamma,\frac{1}{2},\theta,d)}f(x)&=&
\frac{1}{2}\sum\limits_{i,j=1}^da^{(\gamma,d)}_{ij}(x)\partial_i\partial_jf(x)+\frac{1}{2}\sum\limits_{i=1}^dx_i\bigg\{\left(\gamma+1+\theta\right)\left[H^{(d)}_{1+\gamma}(x)-x_i^{\gamma}\right]\nonumber\\
&&\ \ \ \ +\left(\gamma-\frac{1}{2}\right)\left[x_i^{\gamma-1}-H^{(d)}_{\gamma}(x)\right]+\left(\theta+\frac{d+1}{2}\right)\frac{x_i^{\gamma-2}-\sum_{k=1}^{d+1}x_k^{\gamma-1}}{\sum_{k=1}^{d+1}{x_k}^{-1}}\bigg\}\partial_if(x).\nonumber\\
&&
\end{eqnarray}

Note that
\begin{eqnarray*}
(d+1)^2\sum\limits_{i=1}^d\frac{x_i|\partial_if(x)|}{\sum_{k=1}^{d+1}{x_k}^{-1}}\le (d+1)^2\sup_{1\le i\le d}\|\partial_if\|_{\infty}\frac{\sum_{i=1}^dx_i}{\sum_{k=1}^{d+1}k}\le \frac{2(d+1)}{d+2}\sup_{1\le i\le d}\|\partial_if\|_{\infty}.
\end{eqnarray*}
Then, similar to Section 4, we find that for $\gamma\ge 1$,
$$
{\cal E}^{(\gamma,\frac{1}{2},\theta,d)}(f,g)=-({\cal L}^{(\gamma,\frac{1}{2},\theta,d)}f,g)_{L^2(\Delta_d;\varsigma_d(x)dx)},\ \ \ \ f,g\in  C^{\infty}({\Delta_d}).
$$
Hence, $({\cal E}^{(\gamma,\frac{1}{2},\theta,d)},C^{\infty}({\Delta_d}))$ is closable on $L^2({\Delta_d};\varsigma_d(x)dx)$ and its closure $({\cal E}^{(\gamma,\frac{1}{2},\theta,d)},D({\cal E}^{(\gamma,\frac{1}{2},\theta,d)}))$ is a regular Dirichlet form. There exists a time-reversible, conservative, diffusion process $(\{X^{(\gamma,\frac{1}{2},\theta,d)}_t\}_{t\ge 0}$, $\{P^{(\gamma,\frac{1}{2},\theta,d)}_x\}_{x\in {\Delta_d}})$ which is associated with $({\cal E}^{(\gamma,\frac{1}{2},\theta,d)},D({\cal E}^{(\gamma,\frac{1}{2},\theta,d)}))$ and its stationary distribution  is $\varsigma_d(x)dx$. Let $\{X^{(\gamma,\frac{1}{2},\theta)}_t\}_{t\ge 0}$ be the Markov process in Theorem \ref{thm5.2}. Then, similar to Section 4, we can show that any finite dimensional distribution of $\{X^{(\gamma,\frac{1}{2},\theta,d)}_t\}_{t\ge 0}$  converges to that of $\{X^{(\gamma,\frac{1}{2},\theta)}_t\}_{t\ge 0}$ as $d\rightarrow\infty$ for any $\gamma\ge1$ and $\theta>-\frac{1}{2}$.

\begin{rem}

It follows from (\ref{May17F}) that the convergence of \(\mathcal{L}^{(\gamma,\frac{1}{2},\theta,d)}\) as \(d\rightarrow\infty\) is delicate when \(\gamma\in[0,\frac{1}{2})\cup(\frac{1}{2},1)\). The existence of the infinite dimensional limits and their explicit characterization remain open questions for further study.
\end{rem}

\subsubsection{The case $\alpha=0$}

Assume that $\gamma\ge0$, $\alpha=0$ and $\theta>0$.  Let $d\in\mathbb{N}$ and $(B_1,\dots,B_{d+1})$ be a measurable partition of $S$ with $\nu_0(B_i)=p_i=\frac{1}{d+1}$, $1\le i\le d+1$.  Set $x_{d+1}=1-\sum_{i=1}^dx_i$ and define
\begin{eqnarray*}
\vartheta_d(x_1,\dots,x_d)=\frac{\Gamma(\theta)}{\prod_{i=1}^{d+1}\Gamma(p_i\theta)}\prod_{i=1}^{d+1}x_i^{p_i\theta-1},\ \ \ \ (x_1,\dots, x_d)\in \Delta^{\circ}_d.
\end{eqnarray*}
Then, the joint density function of the random vector $(\mu(B_1),\dots,\mu(B_d))$ under $\Pi_{\alpha,\theta,\nu_0}$ is given by $\vartheta_d$.
Define
\begin{eqnarray*}
V^{(d)}(x)=\left(1-\frac{\theta}{d+1}\right)\sum_{i=1}^{d+1}\log x_i.
\end{eqnarray*}
Then,
$$
\vartheta_d(x)=\frac{e^{-V^{(d)}(x)}}{\int_{\Delta_d}e^{-V^{(d)}(y)}dy},\ \ \ \ x\in \Delta^{\circ}_d.
$$

Consider the following non-negative definite symmetric form on $L^2({\Delta_d}; \vartheta_d(x)dx)$:
\begin{eqnarray*}
{\cal E}^{(\gamma,0,\theta,d)}(f,g)=\frac{1}{2}\sum_{i,j=1}^{d}\int_{\Delta_d}
a^{(\gamma,d)}_{ij}(x)\partial_if(x)\partial_jg(x)\vartheta_d(x)dx,\ \ \ \
f,g\in  C^{\infty}({\Delta_d}).
\end{eqnarray*}
Define
\begin{eqnarray*}
{\cal L}^{(\gamma,0,\theta,d)}f(x)&=&
\frac{1}{2}\sum\limits_{i,j=1}^da^{(\gamma,d)}_{ij}(x)\partial_i\partial_jf(x)+\frac{1}{2}\sum\limits_{i=1}^dx_i\bigg\{\left[H^{(d)}_{1+\gamma}(x)-x_i^{\gamma}\right]\nonumber\\
&&\ \ \ \ +\left(\gamma+\frac{\theta}{d+1}\right)\left[x_i^{\gamma-1}-H^{(d)}_{\gamma}(x)\right]\bigg\}\partial_if(x),\ \ \ \ f\in  C^{\infty}({\Delta_d}).
\end{eqnarray*}
By direct calculation, similar to \S 4.1, we can show that
for $\gamma\ge 1$,
$$
{\cal E}^{(\gamma,0,\theta,d)}(f,g)=-({\cal L}^{(\gamma,0,\theta,d)}f,g)_{L^2(\Delta_d;\vartheta_d(x)dx)},\ \ \ \ f,g\in  C^{\infty}({\Delta_d}).
$$
Then, $({\cal E}^{(\gamma,0,\theta,d)},C^{\infty}({\Delta_d}))$ is closable on $L^2({\Delta_d};\vartheta_d(x)dx)$ and its closure $({\cal E}^{(\gamma,0,\theta,d)},D({\cal E}^{(\gamma,0,\theta,d)}))$ is a regular Dirichlet form. There exists a time-reversible, conservative, diffusion process $(\{X^{(\gamma,0,\theta,d)}_t\}_{t\ge 0}$, $\{P^{(\gamma,0,\theta,d)}_x\}_{x\in {\Delta_d}})$ which is associated with $({\cal E}^{(\gamma,0,\theta,d)},D({\cal E}^{(\gamma,0,\theta,d)}))$ and its  stationary distribution  is $\vartheta_d(x)dx$. Let $\{X^{(\gamma,0,\theta)}_t\}_{t\ge 0}$ be the Markov process  in Theorem \ref{thm5.2}. Then, similar to Section 4, we can show that any finite dimensional distribution of $\{X^{(\gamma,0,\theta,d)}_t\}_{t\ge 0}$  converges to that of $\{X^{(\gamma,0,\theta)}_t\}_{t\ge 0}$ as $d\rightarrow\infty$ for any $\gamma\ge1$ and  $\theta>0$.

\subsection{A remark on the labelled model with $\gamma=0$ and $\alpha=\frac{1}{2}$}

Let $(S,d)$ be a Polish space and $\nu_0$ be a diffuse  probability measure on its Borel $\sigma$-algebra ${\cal B}(S)$. Consider the bilinear form
\begin{eqnarray}\label{May27a}
\left\{
\begin{array}{l}
{\cal E}(F,G)=\frac{1}{2}\int_{{\cal P}_1(S)}\langle \nabla
F(\mu),\nabla G(\mu)\rangle_{\mu}
\Pi_{\alpha,\theta,\nu_0}(d\mu),\ \ \ \ F,G\in
{\cal F},\\
{\cal F}=\{F(\mu)=f(\langle \phi_1,\mu\rangle,\dots,\langle
\phi_n,\mu\rangle): \phi_i\in B_b(S),1\le i\le n,f\in
C^{\infty}(\mathbb{R}^n),n\in\mathbb{N}\},
\end{array}
\right.
\end{eqnarray}
where $\langle \phi
,\psi\rangle_{\mu}=\int_S\phi\psi\, d\mu-\int_S\phi\, d\mu\int_S\psi\, d\mu$ for $\phi,\psi\in B_b(S)$.
Suppose that $\alpha\in(0,1)$. In contrast to the one-parameter case (\(\alpha=0\)) and the regimes  where \(\gamma\in\{\alpha\}\cup[1,\infty)\), the closability of the symmetric form (\ref{May27a}) on \(L^2({\cal P}_1(S);\Pi_{\alpha,\theta,\nu_0})\) remains an unsolved problem. A main difficulty is that, in general,  ${\cal F}$ is not contained in the domain of the generator.

In \cite{FS}, we investigated the
relaxation $(\Xi, D(\Xi))$ of $({\cal E},{\cal F})$, i.e., the greatest lower semi-continuous bilinear form on $L^2({\cal
P}_1(S);\Pi_{\alpha,\theta,\nu_0})$ which is a minorant of $({\cal
E},{\cal F})$.
We have that ${\cal F}\subset D(\Xi)$ and $\Xi(F,F)\le {\cal
E}(F,F)$ for any $F\in {\cal F}$, and for every $F\in D(\Xi)$,
\begin{eqnarray*}
\Xi(F,F)&=&\min\left\{\liminf_{n\rightarrow\infty}{\cal
E}(F_n,F_n):
F_n\in {\cal F}\ {\rm for}\ n\in \mathbb{N},\right.\nonumber\\
& &\ \ \ \ \ \ \ \ \ \ \ \ \ \ \ \left.{\rm and}\
\lim_{n\rightarrow\infty}F_n=F\ {\rm in}\ L^2({\cal
P}_1(S);\Pi_{\alpha,\theta,\nu_0})\right\}.
\end{eqnarray*}
Note that if $({\cal E},{\cal F})$ is closable, then $(\Xi,
D(\Xi))$ is just the closure of $({\cal E},{\cal F})$ on
$L^2({\cal P}_1(S);\Pi_{\alpha,\theta,\nu_0})$.
$(\Xi, D(\Xi))$ is a quasi-regular Dirichlet form
on $L^2({\cal P}_1(S);\Pi_{\alpha,\theta,\nu_0})$ (cf. \cite[Corollary 2.8.2]{M}) and hence is associated with a Markov process $X=\{X_t\}_{t\ge0}$ on ${\cal P}_1(S)$ which is time-reversible with the
stationary distribution $\Pi_{\alpha,\theta,\nu_0}$.

Let $\Phi$ be the map from ${\cal P}_1(S)$ to ${\overline{\nabla}}_{\infty}$ such that $\Phi(\mu)$ is the ordered masses of $\mu\in {\cal P}_1(S)$. We will show that the projection $\Phi(X)$ of $X$ on ${\overline{\nabla}}_{\infty}$ is Petrov's diffusion when $\alpha=\frac{1}{2}$. For $p_i>0$, $1\le i\le d+1$, satisfying $\sum_{i=1}^{d+1}p_{i}=1$, let $\varsigma_d$ be defined by (\ref{May6Q1}). Define
\begin{eqnarray*}
{\cal L}^{(d)}f(x)&=&
\frac{1}{2}\sum\limits_{i=1}^dx_i\partial^2_if(x)-\frac{1}{2}\sum\limits_{i,j=1}^dx_ix_j\partial_i\partial_jf(x)\\
&&+\frac{1}{2}\sum\limits_{i=1}^d\left[-\frac{1}{2}-\theta
x_i+\frac{(\theta+\frac{d+1}{2})\frac{p_i^2}{x_i}}{\sum_{i=1}^d\frac{p_i^2}{x_i}+\frac{p_{d+1}^2}{1-\sum_{i=1}^dx_i}}\right]\partial_if(x),\ \ \ \ f\in C^{\infty}({\Delta_d}),
\end{eqnarray*}
and
$$
{\cal E}^{(d)}(f,g)=\frac{1}{2}\sum_{i,j=1}^d\int_{\Delta_d}
x_i(\delta_{ij}-x_j)\partial_if(x)\partial_jg(x)\varsigma_d(x) dx,\ \
f,g\in C^{\infty}({\Delta_d}).
$$
It is shown in \cite{FS} that
\begin{eqnarray}\label{clo}
{\cal E}^{(d)}(f,g)=-\int_{\Delta_d}{\cal L}^{(d)}f(x)g(x)\varsigma_d(x) dx.
\end{eqnarray}

We fix a sequence
$\{(B^k_1,\dots,B^k_{2^k})\}_{k=1}^{\infty}$ of measurable partitions of $S$ as in \S 4.3 and use the same definitions of the maps $\Gamma_k$ and $\Psi_k$.
For $k\in
\mathbb{N}$, define
$$
{\cal F}_{2^k-1}=\{F(\mu)=f(\langle
1_{B^k_{1}},\mu\rangle,\dots,\langle
1_{B^k_{2^k-1}},\mu\rangle):f\in
C^{\infty}(\mathbb{R}^{2^k-1})\}.
$$
Let $F(\mu)=f(\langle 1_{B^k_{1}},\mu\rangle,\dots,\langle
1_{B^k_{2^k-1}},\mu\rangle)$ with $f\in
C^{\infty}(\mathbb{R}^{2^k-1})$ and $G(\mu)=g(\langle
1_{B^k_{1}},\mu\rangle,\dots,\langle 1_{B^k_{2^k-1}},\mu\rangle)$
with $g\in C^{\infty}(\mathbb{R}^{2^k-1})$. By
(\ref{clo}), we get
\begin{eqnarray*}
{\cal E}(F,G)&=&{\cal
E}^{(2^k-1)}(f,g)\\
&=&-\int_{\Delta_{2^k-1}}{\cal L}^{(2^k-1)}f(x)g(x)\varsigma_{2^k-1}(x) dx\\
&=&\left(-({\cal L}^{(2^k-1)}f)\circ\Gamma_k,G\right)_{L^2({\cal
P}_1(S);\Pi_{\alpha,\theta,\nu_0})}.
\end{eqnarray*}
Then,  $({\cal E},{\cal F}_{2^k-1})$ is closable on $L^2({\cal
P}_1(S);\Pi_{\alpha,\theta,\nu_0})$. By \cite[Corollary 3.3]{FS}, $({\cal E},{\cal F}_{2^k-1})$ converges
to $(\Xi, D(\Xi))$ in the  Mosco convergence.

Denote by $({\cal E}^{(2^k-1)},D({\cal E}^{(2^k-1)}))$ the closure of
$({\cal E}^{(2^k-1)},C^{\infty}({\Delta_{2^k-1}}))$ on
$L^2({\Delta_{2^k-1}},\Pi_{\alpha,\theta,\nu_0}\circ\Gamma_{k}^{-1})$.
Let $(T^{(2^k-1)}_t)_{t\ge 0}$ be the semigroup associated with $({\cal
E}^{(2^k-1)},D({\cal E}^{(2^k-1)}))$  on
$L^2({\Delta_{2^k-1}},\Pi_{\alpha,\theta,\nu_0}\circ\Gamma_{k}^{-1})$ and
${Q}_k$ be the orthogonal projection of $L^2({\cal
P}_1(S);\Pi_{\alpha,\theta,\nu_0})$ on the closure of ${\cal
F}_{2^k-1}$. Define
$$
{\tilde T}^{2^k-1}_tF=(T^{(2^k-1)}_t(({Q}_kF)\circ\Gamma_{k}^{-1}))\circ\Gamma_{k},\ \ F\in L^2({\cal P}_1(S);\Pi_{\alpha,\theta,\nu_0}),\, t\ge0.
$$
Let $(T_t)_{t\ge 0}$ be the strongly continuous contraction
semigroup associated with $(\Xi, D(\Xi))$. By \cite[Theorem 3.4]{FS}, ${\tilde T}^{2^k-1}_t$ converges to $T_t$ in the strong operator topology for each $t\ge0$.

Denote by  $(P_t)_{t\ge 0}$ the semigroup associated with Petrov's diffusion on $L^2({\overline{\nabla}}_{\infty};{\rm PD}(\alpha,\theta))$. Define
$$
{\cal H}=\{H\in C({\overline{\nabla}}_{\infty}):H(x_1,x_2,\dots)=f(x_1,\dots,x_d)\ {\rm for\ some}\ f\in C^{\infty}(\mathbb{R}^d)\ {\rm and}\ d\in\mathbb{N}\}.
$$
In what follows, we will show that
\begin{eqnarray}\label{22}
\lim_{k\rightarrow\infty}{\tilde T}^{2^k-1}_t(H\circ(\Phi\circ\Psi_k))=(P_tH)\circ \Phi\ \ {\rm in}\ L^2({\cal P}_1(S);\Pi_{\alpha,\theta,\nu_0}),\ \ H\in {\cal H},\,t\ge0.
\end{eqnarray}
Once the proof of (\ref{22}) is complete, by the  convergence of $H\circ(\Phi\circ\Psi_k)$ to $H\circ\Phi$ in $L^2({\cal P}_1(S);\Pi_{\alpha,\theta,\nu_0})$,  the contraction property of the  operator ${\tilde T}^{2^k-1}_t$  and the  convergence of ${\tilde T}^{2^k-1}_t$ to $T_t$, we deduce that
$$
T_t(H\circ\Phi)=(P_tH)\circ \Phi\ \ \ \ \Pi_{\alpha,\theta,\nu_0} {\text -a.e.},\ \  H\in {\cal H},\,t\ge0,
$$
which implies that the projection $\Phi(X)$ of $X$ on ${\overline{\nabla}}_{\infty}$ is consistent with Petrov's diffusion.

Define
\begin{eqnarray*}
A=\frac{1}{2}\left\{\sum_{i,j=1}^{\infty}x_i(\delta_{ij}-x_j)\frac{\partial^2}{\partial
x_i\partial x_j} -\sum_{i=1}^{\infty}(\alpha+\theta x_i) \frac{\partial
}{\partial x_i}\right\}.
\end{eqnarray*}
It is known that $A$ is the generator of $(P_t)_{t\ge 0}$ (cf. \cite{P} and \cite{FS0}). For $k\in
\mathbb{N}$, define
\begin{eqnarray*}
A^{({2^k-1})}=\frac{1}{2}\left\{\sum_{i,j=1}^{2^k-1}x_i(\delta_{ij}-x_j)\frac{\partial^2}{\partial
x_i\partial x_j} -\sum_{i=1}^{2^k-1}\left[\alpha+\theta x_i-\frac{(\theta+2^{k-1})\frac{1}{x_i}}{\sum_{i=1}^{2^k-1}\frac{1}{x_i}+\frac{1}{1-\sum_{i=1}^{2^k-1}{x_i}}}\right] \frac{\partial
}{\partial x_i}\right\}.
\end{eqnarray*}
Let
$$
\nabla_{d}=\left\{x=(x_1,\dots, x_d)\in \mathbb{R}^{d}:x_1\ge\cdots \ge x_d\ge0\ {\rm
and}\ \sum_{i=1}^dx_i\le 1\right\},\ \ d\in\mathbb{N}.
$$
Then, $A^{({2^k-1})}$ is the projection of  the operator ${\cal L}^{(2^k-1)}$ on $\nabla_{2^k-1}$.
Each $\nabla_{2^k-1}$ is naturally embedded into ${\overline{\nabla}}_{\infty}$.

For $m\ge2$ and $k\in\mathbb{N}$, define
$$
u_k(x)=\sum_{i=1}^{k}x_i^m,\ \ \ \ x\in \nabla_{{2^k-1}}.
$$
Note that
$$
\frac{\theta+2^{k-1}}{\sum_{i=1}^{2^k-1}\frac{1}{x_i}+\frac{1}{1-\sum_{i=1}^{2^k-1}{x_i}}}\le \frac{\theta+2^{k-1}}{\sum_{i=1}^{2^k-1}{i}}\le\frac{\theta+2^{k-1}}{2^{2k-2}}.
$$
Then, for any $x\in \nabla_{{2^k-1}}$,
\begin{eqnarray*}
&&\left|A^{({2^k-1})}u_k(x)-AH_{m}(x)\right|\\
&=&\frac{m}{2}\left|\sum_{i=1}^{k}\frac{(\theta+2^{k-1})x_i^{m-2}}{\sum_{i=1}^{2^k-1}\frac{1}{x_i}+\frac{1}{1-\sum_{i=1}^{2^k-1}{x_i}}}-\sum_{i=k+1}^{{2^k-1}}\left[(m-1)(1-x_i)-(\alpha+\theta x_i)\right]x_i^{m-1}\right|\\
&\le&\frac{m}{2}\left[\frac{k(\theta+2^{k-1})}{2^{2k-2}}+\sum_{i=k+1}^{\infty}\left(m-1+\alpha+|\theta|\right)x_i^{m-1}\right].
\end{eqnarray*}
Thus,
\begin{eqnarray}\label{Jan20a}
\lim_{k\rightarrow\infty}\max_{x\in \nabla_{{2^k-1}}}\left|A^{({2^k-1})}u_k(x)-AH_{m}(x)\right|=0,\ \ \ \ \forall m\ge 2.
\end{eqnarray}

Note that ${\cal P}$  is a core of $A$ and for any $u,v\in{\cal P}$,
$$
A(uv)=Au\cdot v+Av\cdot u+\langle a(x)\nabla u,\nabla v\rangle,
$$
and
$$
A^{({2^k-1})}(uv)=A^{({2^k-1})}u\cdot v+A^{({2^k-1})}v\cdot u+\langle a(x)\nabla u,\nabla v\rangle,\ \ k\in\mathbb{N},
$$
where $a(x)$ is the infinite matrix whose $(i, j)$-th entry is $x_i(\delta_{ij}-x_j)$.
Then, by (\ref{Jan20a}), we find that for any $u\in{\cal P}$ there exist $u_k\in C^{\infty}(\mathbb{R}^{2^k-1})$, $k\in\mathbb{N}$, such that
$$
\lim_{k\rightarrow\infty}\max_{x\in \nabla_{{2^k-1}}}\left|u_k(x)-H_{m}(x)\right|=0,
$$
and
$$
\lim_{k\rightarrow\infty}\max_{x\in \nabla_{{2^k-1}}}\left|A^{({2^k-1})}u_k(x)-AH_{m}(x)\right|=0.
$$
Let $(S^{2^k-1}_t)_{t\ge0}$ be the semigroup associated with  $A^{({2^k-1})}$ on $C(\nabla_{2^k-1})$. Hence, by \cite[Theorem 1.6.1]{EK}, we deduce that
$$
\lim_{k\rightarrow\infty}\max_{x\in \nabla_{{2^k-1}}}\left|S^{2^k-1}_t(H|_{\nabla_{2^k-1}})(x)-P_tH(x)\right|=0,\ \ \ \  H\in C({\overline{\nabla}}_{\infty}),\,t\ge0.
$$
Therefore, (\ref{22}) holds and the proof is complete.
\hfill $\Box$


\begin{thebibliography}{100}

\bibitem{AMR} S. Albeverio, Z.M. Ma and M. R\"ockner. Quasi-regular Dirichlet forms and Markov processes. J. Funct. Anal. {\bf 111}, 118--154 (1993).

      \bibitem{al85}
      D.J. Aldous. { Exchangeability and Related Topics}. Ecole d'\'Et\'e de Probabilit\'es de Saint Flour, Lecture Notes in Math., Vol. 1117, 1--198. Springer-Verlag (1985).

\bibitem{berj08}
J. Bertoin. Two-parameter Poisson-Dirichlet measures and
reversible exchangeable fragmentation-coalescence processes.  {
Combin. Probab. Comput.}  {\bf 17}, 329--337 (2008).

\bibitem{B}  D. Bertsimas and I. Popescu (2005).  Optimal inequalities in probability theory: a convex optimization approach. Oper. Res. {\bf 53}, 780--804.

\bibitem{BlaMac73}
      D. Blackwell and J.B. MacQueen. Ferguson distribution via P\'olya urn scheme. {
      Ann. Statist.} {\bf 1}, 353--355 (1973).

    \bibitem{B-O09}
A. Borodin and G. Olshanski. Infinite-dimensional diffusions as limits of random walks on
partitions. Probab. Theory Related Fields.  {\bf 144}, 281–318 (2009).

\bibitem{Car} M.A. Carlton.  Applications of the Two-parameter Poisson-Dirichlet Distributions. PhD Thesis (1999).

\bibitem{C} M.A. Carlton. A family of densities derived from the three-parameter Dirichlet process. J. Appl. Probab. {\bf 39},
764--774 (2002).

\bibitem{CF} Z.Q. Chen and M. Fukushima. Symmetric Markov Processes, Time Change, and Boundary Theory. Princeton University Press (2011).

\bibitem{CBERS}
C. Costantini, P. De Blasi, S. N. Ethier, M. Ruggiero and D. Spanò. Wright–Fisher construction
of the two-parameter Poisson–Dirichlet diffusion. Ann. Appl. Probab. {\bf  27}, 1923–1950 (2017).

 \bibitem{Dia-04}
     P. Diaconis, E. Mayer-Wolf, O. Zeitouni and  P.W. Zerner.
       The Poisson-Dirichlet law is the unique invariant distribution for the uniform split-merge transformations.
      Ann. Probab. {\bf 32},   915--938 (2004).

 \bibitem{engen78}
      S. Engen. {  Abundance Models with Emphasis on Biological Communities and Species Diversity}. Chapman-Hall (1978).


\bibitem{Ethier14}
S.N. Ethier. A property of Petrov’s diffusion. Electron. Commun. Probab. {\bf 19}, 1–4 (2014).


\bibitem{EK2} S.N. Ethier and T.G. Kurtz. The infinitely-many-neutral-alleles diffusion model. Adv. Appl. Probab. {\bf 13}, 429--452 (1981).


\bibitem{EK} S.N. Ethier and T.G. Kurtz. Markov Processes: Characterization and Convergence.
John Wiley \& Sons (1986).

\bibitem{Ewen04}
W.J. Ewens. {  Mathematical Population Genetics, Vol. I}. Springer-Verlag (2004).


\bibitem{F} S. Feng. The Poisson-Dirichlet Distribution and Related Topics. Springer (2010).



      \bibitem{FeHo98}
      S. Feng  and F.M. Hoppe. Large deviation principles for
      some random combinatorial structures in population genetics
      and Brownian motion.
      \newblock Ann. Appl. Probab.  {\bf 8},  975--994 (1998).

\bibitem{FS0} S. Feng and W. Sun. Some diffusion processes associated with two parameter
Poisson-Dirichlet distribution and Dirichlet process. Probab. Theory Relat. Fields. {\bf 148},
501--525 (2010).

\bibitem{FS} S. Feng and W. Sun. A dynamic model for the two-parameter Dirichlet process.  Potential Anal. {\bf 51}, 147--164 (2019).

\bibitem{FSWX}
S. Feng, W. Sun,  F.-Y. Wang  and F. Xu. Functional inequalities for the two-parameter
extension of the infinitely-many-neutral-alleles diffusion. J. Funct. Anal. {\bf 260}, 399–413 (2011).




\bibitem{fenwang07}
S. Feng and F.-Y. Wang. A class of infinite-dimensional
diffusion processes with connection to population genetics. {
J. Appl. Probab.} {\bf 44}, 938--949 (2007).

\bibitem{Fer73}
T.S. Ferguson. A Bayesian analysis of some nonparametric
problems. { Ann. Stat.} {\bf 1}, 209--230 (1973).


\bibitem{Forman-etc21}
N. Forman, S. Pal, D. Rizzolo and M. Winkel. Diffusions on a space of interval partitions:
Poisson–Dirichlet stationary distributions. Ann. Probab. {\bf 49}, 793–831 (2021).

\bibitem{Forman-etc23}
 N. Forman, S. Pal, D. Rizzolo and M. Winkel. Ranked masses in two-parameter Fleming-Viot
diffusions. Trans. Amer. Math. Soc. {\bf 376}, 1089–1111 (2023).

 \bibitem{Forman-etc22}
  N. Forman, D. Rizzolo, Q. Shi and M. Winkel.  A two-parameter family of measure-valued
diffusions with Poisson–Dirichlet stationary distributions. Ann. Appl. Probab. {\bf 32},
2211–2253 (2022).

\bibitem{Fuku} M. Fukushima. Dirichlet Forms and Markov Processes. North-Holland (1980).

\bibitem{Fuku2}  M. Fukushima, Y. Oshima and M. Takeda. Dirichlet Forms and Symmetric Markov Processes (2nd rev. and ext. ed.). De Gruyter (2011).

\bibitem{GV17}
S. Ghosal and A. van der Vaart.
{Fundamentals of Nonparametric Bayesian Inference},
Cambridge Series in Statistical and Probabilistic Mathematics 44. Cambridge University Press (2017).

\bibitem{GK01}
A. Gnedin and S. Kerov.  A characterization of GEM distributions. Combin. Probab. Comput. {\bf 10},  213--217 (2001).




\bibitem{Gri80a}
      R.C. Griffiths.
       Lines of descent in the diffusion approximation of neutral Wright-Fisher models.
 Theor. Pop. Biol. {\bf 17}, 35--50 (1980).

    \bibitem{Hop84}
      F.M. Hoppe. Polya-like urns and the Ewens sampling formula.  J.
      Math. Biol. {\bf 20}, 91--94 (1984).


 \bibitem{Kingman75}
      J.C.F. Kingman. Random discrete distributions. J. Roy.
      Statist. Soc. B. {\bf 37}, 1-22 (1975).


      \bibitem{lsy99}
      Z. Li, T. Shiga and L. Yao.
       A reversible problem for Fleming-Viot processes.
      \newblock Elect. Comm. Probab. {\bf 4}, 71--82 (1999).



\bibitem{MR} Z.M. Ma and M. R\"ockner. Introduction to the Theory of (Non-Symmetric) Dirichlet Forms. Springer (1992).

  \bibitem{mcc65}
      J.W. McCloskey. A Model for the Distribution of Individuals by Species in an Environment.
      PhD Thesis (1965).

\bibitem{M} U. Mosco. Composite media and asymptotic Dirichlet forms. J. Funct. Anal. {\bf 123}, 368--421 (1994).

\bibitem{Mu} S. M\"uck. Large deviations w.r.t. quasi-every starting point for symmetric right processes on general state spaces. Probab. Theory Relat. Fields. {\bf 99},
527--548 (1994).

\bibitem{ORS} L. Overbeck, M. R\"ockner and B. Schmuland (1995). An analytic approach to Fleming-Viot processes with interactive selection.  Ann. Probab. {\bf 23}, 1--36.

\bibitem{P}  L.A. Petrov. Two-parameter family of infinite-dimensional diffusions on the Kingman
simplex. Funct. Anal. Appl. {\bf 43}, 279--296 (2009).

\bibitem{Pitman96}
J. Pitman. Random discrete distributions invariant under size-biased permutation.
Adv.  Appl. Probab. {\bf 28}, 525–539 (1996).

  \bibitem{Pitman2002}
     J. Pitman. Poisson-Dirichlet and GEM invariant distributions for split-merge transformations of an interval partition.
       Combin. Probab. Comput. {\bf 11},  501--514 (2002).

\bibitem {PY} J. Pitman and M. Yor. The two-parameter Poisson-Dirichlet distribution derived from a stable subordinator. Ann. Probab. {\bf 25}, 855--900 (1997).


\bibitem{RS} M. R\"ockner and B. Schmuland. Quasi-regular Dirichlet forms: examples and counterexamples. Canadian J. Math. {\bf 47}, 165--200 (1995).

\bibitem{S1} B. Schmuland. A result on  the infinitely many neutral alleles diffusion model. Adv. Appl. Probab. {\bf 28}, 253--267 (1991).

\bibitem{S2} B. Schmuland. On the local property for positivity preserving coercive forms.  In Z.M. Ma, M. R\"ockner, \& J.A. Yan (Eds.), Dirichlet Forms and Stochastic Processes (pp. 345–354). Walter de Gruyter (1995).

\bibitem{S} B. Schmuland. Lecture Notes on Dirichlet Forms. University of Alberta (1995).





\end{thebibliography}
\end{document}